\newcommand*\new[1]{{#1}}
\title{Warped geometries of Segre--Veronese manifolds\thanks{\textbf{Funding}: This project was funded by BOF project C16/21/002 by the Internal Funds KU Leuven and FWO project G080822N. J. Van der Veken is additionally supported by the Research Foundation---Flanders (FWO) and the Fonds de la Recherche Scientifique (FNRS) under EOS Project G0I2222N.}}
\author{%
\hspace{1cm}Simon Jacobsson\thanks{KU Leuven, Department of Computer Science, Celestijnenlaan 200A -- box 2402, B-3000 Leuven, Belgium (\email{simon.jacobsson@kuleuven.be}). ORCiD: 0000-0002-1181-972X} \and
Lars Swijsen\thanks{KU Leuven, Department of Mathematics, Celestijnenlaan 200B -- box 2400, B-3000, Leuven, Belgium. ORCiD: 0000-0002-8687-853X} \and
Joeri Van der Veken\thanks{KU Leuven, Department of Mathematics, Celestijnenlaan 200B -- box 2400, B-3000, Leuven, Belgium (\email{joeri.vanderveken@kuleuven.be}). ORCiD: 0000-0003-0521-625X} \and \hspace{1cm}
Nick Vannieuwenhoven\thanks{KU Leuven, Department of Computer Science, Celestijnenlaan 200A -- box 2402, B-3000 Leuven, Belgium (\email{nick.vannieuwenhoven@kuleuven.be}); Leuven.AI -- KU Leuven Institute for AI, B-3000 Leuven, Belgium. ORCiD: 0000-0001-5692-4163}}
\DeclareMathAlphabet{\mathpzc}{OT1}{pzc}{m}{it}
\newcommand{\RR}{\mathbb{R}}
\newcommand{\Sp}{\mathbb{S}}
\newcommand{\Tang}{\mathrm{T}}
\newcommand{\deriv}{\mathrm{d}}
\newcommand{\vect}[1]{\mathbf{#1}}
\newcommand{\tensor}[1]{\mathpzc{#1}}
\newcommand{\dist}{\mathrm{dist}}
\newcommand{\Var}[1]{\mathcal{#1}}
\renewcommand{\innerproduct}[2]{\left\langle{}#1, #2\right\rangle}
\begin{document}

\maketitle

\begin{abstract}
	Segre--Veronese manifolds are smooth submanifolds of tensors comprising the partially symmetric rank-$1$ tensors.
	We investigate a one-parameter family of warped geometries of Segre--Veronese manifolds, which includes the standard Euclidean geometry. This parameter controls by how much spherical tangent directions are weighted relative to radial tangent directions.
	We present closed expressions for the exponential map, the logarithmic map, and the intrinsic distance in these warped Segre--Veronese manifolds, which can be computed efficiently numerically.
	It is shown that Segre--Veronese manifolds are not geodesically connected in the Euclidean geometry, while they are for some values of the warping parameter. The benefits of geodesic \new{connectedness} may outweigh using the Euclidean geometry in certain applications. One such application is presented: numerically computing the Riemannian center of mass for averaging rank-$1$ tensors.
\end{abstract}
\begin{keywords}
Segre--Veronese manifold; partially symmetric rank-$1$ tensors; geodesic; exponential map; logarithmic map; warped geometry
\end{keywords}
\begin{AMS}
15A69; 65F99; 53C22; 53A35; 14N07
\end{AMS}

\section{Introduction}

The $\vect{k}$th \emph{Segre--Veronese manifold} $\Var{S}^\vect{k}$ of \emph{partially symmetric rank-$1$ tensors} in the space of $n_1^{k_1} \times n_2^{k_2} \times\cdots\times n_d^{k_d}$ real arrays is a fundamental set of elementary tensors~\cite{Landsberg2012}. 
Special cases include the punctured plane 
\begin{align*}
	\Var{S}^{(1)} := \RR^n_* := \set{ \lambda \vect{u} \mid \lambda \in \RR_{*}, \norm{\vect{u}} = 1 };
\end{align*}
the manifold of symmetric rank-$1$ matrices
\begin{align*}
	\Var{S}^{(2)} := \set{ \lambda \vect{u} \vect{u}^{\mathsf{T}} \mid \lambda\in\RR_{*},\, \norm{\vect{u}}  = 1 } \subset \RR^{n \times n},
\end{align*}
and, more generally, the Veronese manifold $\Var{S}^{(d)}$ of symmetric rank-$1$ tensors \cite{Landsberg2012,CGLM2008}; and the manifold of rank-$1$ matrices
\begin{align*}
	\Var{S}^{(1,1)} := \set{ \lambda \vect{u} \vect{v}^{\mathsf{T}} \mid \lambda\in \RR_{*},\, \norm{\vect{u}} = 1,\, \norm{\vect{v}} = 1 } \subset \RR^{m \times n},
\end{align*}
and, more generally, the Segre manifold $\Var{S}^{(1,\ldots,1)}$ of rank-$1$ tensors \cite{Landsberg2012}. 
Analogously to the role of rank-$1$ matrices in a low-rank matrix decomposition, Segre--Veronese manifolds are the basic building blocks of more advanced tensor rank decomposition models \cite{Landsberg2012}. Such sums of partially symmetric rank-$1$ tensors feature in a plethora of applications in biomedical engineering, chemometrics, computer science, data science, machine learning, psychometrics, signal processing, and statistics, among others; see the overview articles \cite{KB2009,SdLFHPF2017,PFS2016,Anandkumar2014,Morup2011,BTYZQ2021} and the references therein.

The \emph{algebraic} geometry of the Segre--Veronese manifold and its secant varieties have been thoroughly studied since the $19$th century~\cite{BCCGO2018}. By contrast, comparatively little is known about the \emph{Riemannian} geometry of Segre--Veronese manifolds, let alone of (the smooth loci of) its higher secant varieties.
To our knowledge, only the Euclidean geometry of the Segre manifold of rank-$1$ tensors (i.e., $\vect{k}=(1, \dots ,1)$) as a submanifold of $\RR^{n_1 \times \dots \times n_d}$ was investigated in the prior works~\cite{SVV2022,Swijsen2022}.

The present paper studies essential properties of the Segre--Veronese manifold $\Var{S}^\vect{k}$ as a \emph{metric space}.
We are primarily interested in characterizing its \emph{geodesics}.
These curves $\gamma \subset \Var{S}^\vect{k}$ can be thought of as (locally) length-minimizing curves that generalize straight lines in Euclidean spaces.
\new{Being solutions of differential equations, geodesics can be characterized as the solution of an \emph{initial value} or a \emph{boundary value} problem:}
\begin{enumerate}
	\item A point $\tensor{P} \in \Var{S}^\vect{k}$ and a tangent vector $\dot{\tensor{P}} \in \Tang_\tensor{P} \Var{S}^{\vect{k}}$ \new{can be specified to identify a geodesic $\gamma$ that satisfies} $\gamma(0)=\tensor{P}$ and $\gamma'(0) := {\derivative{\gamma}{t}}(0) = \dot{\tensor{P}}$. \new{The map that takes these inputs to the corresponding geodesic is called the manifold's \emph{exponential map}.}
	\item Two \emph{``endpoints''} $\tensor{P}$, $\tensor{Q} \in \Var{S}^\vect{k}$ \new{can be specified to identify a geodesic $\gamma$ that} satisfies $\gamma(0)=\tensor{P}$ and $\gamma( \ell )=\tensor{Q}$, where $\ell$ is the distance between $\tensor{P}$ and $\tensor{Q}$. \new{The map that takes these two endpoints as input and sends them to the corresponding geodesic is called the \emph{logarithmic map} of the manifold.}
\end{enumerate}
\new{We will characterize the geodesics of Segre--Veronese manifolds by presenting explicit expressions for their exponential and logarithmic maps.}

\subsection{Applications of geodesics}
\new{The logarithmic map} finds application in (i) statistics on manifolds~\cite{Fletcher2020}, (ii) interpolation on manifolds~\cite{Zimmermann2021}, and (iii) approximation of maps into manifolds.
First, the definition and computation of statistics on manifolds often involves geodesics \new{connecting two points}~\cite{Fletcher2020}.
The simplest statistic, the \emph{Fr\'echet mean}~\cite{Frechet1948}, \emph{Riemannian center of mass}, or \emph{Karcher mean}~\cite{Karcher1977}, in a manifold $\Var{M}$ consists of the point(s) that minimize the integral of the squared intrinsic distances to a fixed set of points in $\Var{M}$.
The gradient of the squared intrinsic distance function from $x$ to a fixed point $x^\star\in\Var{M}$ is the tangent vector at $x$ of a geodesic connecting $x$ and $x^\star$. Therefore, geodesics appear for computing such a Fr\'echet mean both in gradient-based Riemannian optimization algorithms ~\cite{PFA2006,Pennec2006,JVV2012} and other approximation algorithms~\cite{Karcher1977,JVV2012,MHA2018,Chakraborty20}.
\new{The logarithmic map} also appears in the definition and computation of higher-order statistics on manifolds, such as \emph{principal geodesic analysis}~\cite{FLPJ2004,FJ2007,SLN2013} and the Gaussian distribution and its parameter estimation~\cite{Pennec2006,Chakraborty19}.
It is an essential ingredient in \emph{geodesic regression}~\cite{BA2011,Fletcher2012,Fletcher2020} and generalizations thereof~\cite{HFJ2014}.
Second, many algorithms for interpolation of data in manifolds rely on \new{the logarithmic map} \cite{NHP1989,MSL2006,JSLR2006,PN2007,NYP2013,GMA2018,SA2019,Zimmermann20,Zimmermann2021,MMHAM2021,ZK2022,ZB2024}.
Third, geodesics \new{connecting endpoints} are an essential building block of algorithms for constructing an approximation of a function from a Euclidean space into a Riemannian manifold. They are used in the Riemannian moving least squares algorithm~\cite{Grohs2012,GSY2016,SCW2023}, and the pullback-based methods of~\cite{LT2017,JVVV2024,WVVV2025}.

\new{The exponential map has} applications in (i) Riemannian optimization \cite{AMS2008,Boumal2023}, and (ii) integration on manifolds~\cite{HWL2006}.
First, \new{the exponential map} (or approximations thereof called \emph{retractions}~\cite{Seguin24}) are an essential component of \emph{Riemannian optimization algorithms}, which optimize a smooth objective function over a constraint set that is a smooth manifold~\cite{AMS2008,Boumal2023}.
Second, \new{the exponential map}, the projection retraction~\cite{AM2012}, or other retractions are essential in intrinsic integration schemes for vector fields on manifolds, such as the ones arising from differential equations on manifolds~\cite{CG1993,HM1994,M-KZ1997,M-K1998,Hairer2001,LO2002,HWL2006} or the projected ones from dynamic low-rank approximation methods (see~\cite{KL2007,KL2010,LRSV2013,UV2013,LOV2015,FL2018,CGV2023,HNS2023,CL2024}).

The foregoing discussion described general potential applications of geodesics. Next, we highlight plausible applications in the context of Segre--Veronese manifolds. Geodesics can be used in Riemannian optimization for seeking an approximate \emph{partially symmetric tensor rank decomposition}~\cite{BCCGO2018}. Such algorithms were already applied to approximate tensor rank decompositions using a retraction in~\cite{BV2018} and using \new{the exponential map} in~\cite{SVV2022}, and to \emph{symmetric tensor rank} or \emph{Waring decompositions}~\cite{BCCGO2018,CGLM2008} using a retraction in~\cite{KKM2022}.
As illustrated in \cref{sec_applications}, computing a Fr\'echet mean is a natural alternative to averaging tensor rank decompositions based on factor matrices as was proposed in~\cite{CST2023} to improve the robustness of stochastic tensor decompositions.

\subsection{Contributions}
Our main contributions are analytic expressions of the exponential and logarithmic maps of Segre--Veronese manifolds, in \cref{cor_exponential_map} and \cref{thm_logarithm_SV} respectively.
We establish a few auxiliary results, such as a formula for the distance between points in Segre--Veronese manifolds in \cref{prop:distance_formula}.

\Cref{prop:existence_of_minimizing_geodesics} entails that not every pair of partially symmetric rank-$1$ tensors in a Segre--Veronese manifold can be connected by a minimizing geodesic in the standard Euclidean geometry.
In some applications, such as the one in \cref{sec_applications}, it is more important that every pair of points is connected by a minimizing geodesic than that Euclidean geometry is used.
For this reason, we investigate \emph{$\alpha$-warped geometries} \cite{Chen1999} of Segre--Veronese manifolds. 
These geometries are described in detail in \cref{sec_warping_geometry}; essentially, they scale, by a factor of $\alpha > 0$, the length of all tangent directions that do not infinitesimally change the Euclidean norm.
The choice $\alpha = 1$ corresponds to the Euclidean geometry.
\Cref{fig_alphawarping} illustrates \new{several geodesic segments, specified either by an initial point and a tangent vector as in the left panel, or by two ``endpoints'' as in the right panel; that is, these can be interpreted as the geodesics captured by, respectively, the exponential and logarithmic maps.}

\begin{figure}[tb]
	\centering
	\begin{subfigure}{17.3em}
	\includegraphics[width=\textwidth]{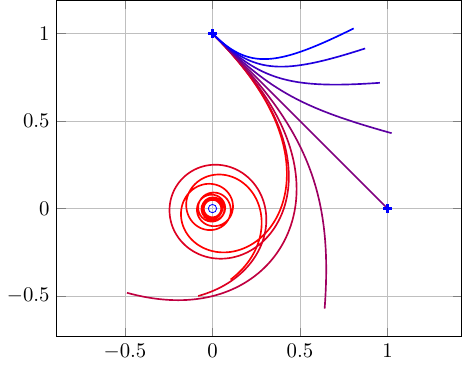}
	\caption{
		\new{Geodesics starting at $p=(0,1)$ in the tangent direction $\dot{p}=(1,-1)$ in several $\alpha$-warped geometries. All geodesic segments have the same length $\sqrt{2}$ in their $\alpha$-warped geometry, while their Euclidean lengths are seen to be very different. In the $\alpha$-warped geometries with $\alpha < 1$, the geodesics will first spiral towards the circle about the origin of radius $\alpha$ and then spiral out again from it. This will be evident from \cref{prop_geodesics,cor_exponential_map}.}%
	} \label{fig_alphawarping_left}
	\end{subfigure}
	\hfill
	\begin{subfigure}{17.3em}
	 	\includegraphics[width=\textwidth]{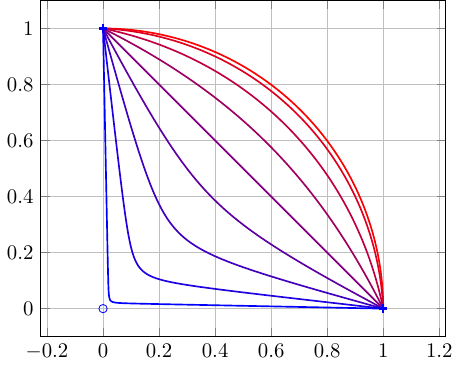}
		\caption{\new{Minimizing geodesic segments between $p=(0,1)$ and $q=(1,0)$ in several $\alpha$-warped geometries. As $\alpha \to 0$, the geodesic segment will approach the unit circle arc that connects $p$ and $q$. As $\alpha\to\infty$, the geodesic segment will converge to the piecewise linear path formed by the line segments from $(0,1)$ to $(1,0)$ via the node $(0,0)$. This will be evident from \cref{thm_logarithm_preSV,thm_logarithm_SV,prop_limiting_curve}}. \phantom{We need a bit more text to get to the next line.}}
		\label{fig_alphawarping_right}
	\end{subfigure}
	\vspace{-10pt}
	\caption{\new{Geodesics in the $\alpha$-warped geometry of the punctured plane $\Var{S}^{(1)}$ for $\alpha = 0.05$ (the red paths curving towards the origin, i.e., acceleration vectors pointing toward the origin), $\frac{1}{4}$, $\frac{1}{2}$, $\frac{3}{4}$, $1$ (the purple straight lines between $(0,1)$ and $(1,0)$), $\frac{5}{4}$, $\frac{3}{2}$, $\frac{7}{4}$, and $1.95$ (the blue paths curving away from the origin, i.e., acceleration vectors pointing away from $(0,0)$). Essentially, the left panel shows evaluations of the exponential map, and the right panel shows evaluations of the logarithmic map.}}\label{fig_alphawarping}
	\vspace{-10pt}
\end{figure}

\subsection{Outline}
The next section introduces background material from Riemannian geometry and describes $\alpha$-warped geometries of $\RR_*^n$. General $\alpha$-warped Segre--Veronese manifolds are obtained as Riemannian embedded submanifolds thereof. We also give an alternative characterization of $\alpha$-warped Segre--Veronese manifolds, revealing them to be covered by a warped product of the positive real numbers and a product of spheres, which we call a pre-Segre--Veronese manifold.
This characterization is exploited in the subsequent sections: the exponential maps of (pre-)Segre--Veronese manifolds are computed in \cref{sec_exp} and their logarithmic maps are computed in \cref{sec_log}.
We present the sectional curvatures of Segre--Veronese manifolds in \cref{sec_sectional_curvatures}.
Thereafter, in \cref{sec_applications}, we detail an example application of geodesics for computing the Fr\'echet mean.

\subsection*{Notation}
For convenience, scalars will be typeset in a lower case, vectors in a bold lower case, tensors in a calligraphic upper case, and manifolds in another calligraphic upper case.
We will emphasize that a vector lives in a tangent space by adding an upper dot, e.g., $\dot{\vect{v}}$.

Unless otherwise specified, $(\RR^n, \langle\cdot,\cdot\rangle)$ will be abbreviated to $\RR^n$; it is the inner product space of $n$-dimensional real vectors equipped with the standard Euclidean inner product.
If $\Var{M} \subset \RR^{n}$ is any subset, then 
\(
	\Sp(\Var{M}) := \set{ \vect{u} \in \Var{M} \mid \norm{\vect{u}}=1 } 
\)
is the space of vectors of $\Var{M}$ of unit Euclidean norm. We use the shorthand $\Sp^{n-1} := \Sp(\RR^n)$ for the unit sphere in $\RR^n$.
The angular distance from $\vect{u}$ to $\vect{v}$, i.e., the arclength of a shortest great circle segment connecting them, is $\sphericalangle(\vect{u}, \vect{v}) = \cos^{-1}(\langle \vect{v}, \vect{u} \rangle) \in [0, \pi]$.

Let $\Var{M}$ be a manifold.
Its tangent space at $x\in\Var{M}$ is denoted by $\Tang_x \Var{M}$.
The exponential map of $\Var{M}$ at $x$ is $\exp_x \colon \Omega_x \to \Var{M}$, where $\Omega_x$ is a suitable open neighborhood of $0 \in \Tang_x \Var{M}$.
The logarithmic map of $\Var{M}$ at $x$ is $\log_x \colon \Var{M}_x \to \Tang_x \Var{M}$, where $\Var{M}_x$ is a suitable open neighborhood of $x \in \Var{M}$.
For a \new{differentiable} curve $\gamma \colon \RR \to \Var{M}$, we denote the derivative to its parameter (see \cite[Section 3]{Lee13} for a definition) using a prime, i.e., 
\[
\gamma'(t) := \frac{\mathrm{d}}{\mathrm{d} t} \gamma (t) \quad\in \Tang_{x(t)} \Var{M}.
\]

\new{The exponential and logarithmic maps of a sphere $\mathbb{S}^{n-1}$ are, respectively,
\begin{align}
\label{eqn_exp_sphere}
 \exp{}^\mathbb{S}_{\vect{u}}(\dot{\vect{u}}) &= \vect{u} \cos(\|\dot{\vect{u}}\|) + \frac{\dot{\vect{u}}}{\|\dot{\vect{u}}\|} \sin(\|\dot{\vect{u}}\|), \quad\text{and}\\
 \label{eqn_log_sphere}
 \log{}^\mathbb{S}_{\vect{u}}(\vect{v}) &= (\vect{v} - \innerproduct{\vect{v}}{\vect{u}} \vect{u}) \frac{\sphericalangle(\vect{u}, \vect{v})}{\sin(\sphericalangle(\vect{u}, \vect{v}))};
\end{align}
see \cite[Examples 10.16 and 10.21]{Boumal2023}. The superscript is dropped from $\mathbb{S}^{n-1}$ in these maps as the dimension is also clear from the subscript $\vect{u}\in\mathbb{S}^{n-1}$.} 
\new{Note that we have $\norm{\log{}^\mathbb{S}_{\vect{u}}(\vect{v})}=\sphericalangle(\vect{u},\vect{v})$.}

\section{\texorpdfstring{$\alpha$}{alpha}-warped geometries for Segre--Veronese manifolds} \label{sec_warping_geometry}

The next subsection recalls preliminary results from Riemannian geometry.
In particular, the warped product of manifolds will be described.
Then, in \cref{sec_sub_SV}, the $\vect{k}$th Segre--Veronese manifold is formally defined and presented as a Riemannian embedded submanifold of a warped geometry of the space of tensors.
\Cref{sec_pre_SV} shows how Segre--Veronese manifolds can be realized as a normal Riemannian covering of a simpler warped product involving only spheres and the positive reals.

\subsection{Warped geometries of \texorpdfstring{$\RR^n_*$}{Rn*}}
\label{sub:warped_geometries}

Recall that a smooth embedded manifold $\Var{M}\subset\RR^N$ equipped with a \emph{Riemannian metric}---an inner product $g_x \colon \Tang_x \Var{M} \times \Tang_x \Var{M} \to \RR$ on its \emph{tangent spaces} $\Tang_x \Var{M}$ that satisfies some technical regularity conditions, see, e.g., \cite{Lee1997}---induces an intrinsic metric structure on $\Var{M}$ as follows.
A Riemannian metric induces a norm $\norm{\dot{\vect{x}}}_x := \sqrt{g_x(\dot{\vect{x}}, \dot{\vect{x}})}$ for all $\dot{\vect{x}}\in\Tang_x\Var{M}$, which can be used to measure the \emph{length} $\ell$ of piecewise smooth curves $\gamma \colon [0,1] \to \Var{M}$:
\begin{align*}
	\ell(\gamma) := \int_{0}^1 \norm{ \gamma'(t) }_{\gamma(t)} \dd{t}.
\end{align*}
Letting the set of all piecewise smooth curves be our length structure $L$, the manifold $\Var{M}$ becomes a \emph{length space} \cite{BBI2001}.
The \emph{intrinsic metric} is then
\begin{align*}
	\dist_\Var{M}(x, y) := \inf_{\substack{\gamma\in L,\\\gamma(0)=x,\, \gamma(1)=y}} \ell(\gamma).
\end{align*}
A diffeomorphism (a smooth bijection with smooth inverse) between manifolds that preserves the intrinsic metric is called an \emph{isometry}.

A piecewise smooth curve $\gamma \in L$ connecting $x = \gamma(0)$ and $y = \gamma(1)$ such that the length of $\gamma$ equals the distance $\dist_\Var{M}(x,y)$ is called a \emph{minimizing geodesic} \cite{Lee1997}.
That is, a minimizing geodesic is a shortest path in $L$ between $x$ and $y$ contained in $\Var{M}$.
More generally, a \emph{geodesic} is any piecewise smooth curve that satisfies the first-order optimality condition, i.e., is a critical point, of the variational problem $\min_{\gamma\in L} \ell(\gamma)$ subject to appropriate constraints (such as fixing two endpoints).
Such curves are always locally length-minimizing \cite[Theorem 6.12]{Lee1997} in the sense that they are minimizing geodesics on a segment $\gamma|_{[0,t)}$ for some $t > 0$.
Note that isometries map geodesics to geodesics.

For each tangent vector $\dot{\vect{v}} \in \Tang_x \Var{M}$, there is a unique constant-speed geodesic satisfying $\gamma(0) = x$, $\gamma'(0) = \dot{\vect{v}}$ \cite[Theorem 4.10]{Lee1997}.
\new{We can build} a map from (an \new{open subset} of) $\Tang_x \Var{M}$ to $\Var{M}$ via $\dot{\vect{v}} \mapsto \gamma(1)$;
this is the \emph{exponential map}.
It is a diffeomorphism between a neighborhood of $0 \in \Tang_x \Var{M}$ and a neighborhood of $x \in \Var{M}$~\cite[Lemma 5.10]{Lee1997}.
Its inverse is called the \emph{logarithmic map}.

We also mention two important concepts in Riemannian geometry: The \emph{Riemann \new{curvature} tensor} and the \emph{sectional curvature}.
The Riemann \new{curvature} tensor on a Riemannian manifold $\Var{M}$ is a multilinear map $R \colon \Tang_p \Var{M} \times \Tang_p \Var{M} \times \Tang_p \Var{M} \to \Tang_p \Var{M}$ that measures the change in a tangent vector as it is parallel transported around a small loop.
The sectional curvature is a map $K \colon \Tang_p \Var{M} \times \Tang_p \Var{M} \to \mathbb{R}$ that measures the angular defect of small triangles around $p$.
See \cite[Chapter 7]{Lee1997} for precise definitions.

The punctured space $\RR^n_*$ is an open submanifold of $\RR^n$; hence, it is a smooth manifold.
It can be equipped with the standard Euclidean metric $g$ at $\vect{a}\in\RR^n_*$, so that
\(
 g_{\vect{a}}( \dot{\vect{x}}, \dot{\vect{y}} ) = \langle \dot{\vect{x}}, \dot{\vect{y}} \rangle = \dot{\vect{x}}^{\mathsf{T}} \dot{\vect{y}},
\)
where $\dot{\vect{x}}$, $\dot{\vect{y}} \in \Tang_\vect{a} \RR^n_* = \RR^n$ and $\langle\cdot,\cdot\rangle$ is the standard Euclidean inner product. This leads to the Euclidean geometry of $(\RR^n_*, g)$. 
Other meaningful geometries of $\RR^n_*$ exist. For example, $\RR^n_*$ is diffeomorphic to the product of the positive real line and an $(n-1)$-dimensional sphere:
\(
 \RR^n_* \simeq \RR_{>0} \times \Sp^{n-1}.
\)
Under this identification, the above standard Euclidean metric can be expressed as
\begin{align*}
	g_{\lambda \vect{u}}( \dot{x} \vect{u} + \lambda \dot{\vect{v}}, \dot{y} \vect{u} + \lambda \dot{\vect{w}} ) 
	= \dot{x} \dot{y} + \lambda^2 \langle \dot{\vect{v}}, \dot{\vect{w}} \rangle,
\end{align*}
where $\dot{x}$, $\dot{y} \in \RR = \Tang_\lambda \RR_{>0}$ and $\dot{\vect{v}}$, $\dot{\vect{w}} \in \vect{u}^\perp = \Tang_{\vect{u}} \Sp^{n-1}$.
Herein, the inner products on the right are the inner products inherited from the ambient Euclidean spaces $\RR$ and $\RR^n$, respectively.
Note in particular that the metric that $\RR^n_*$ inherits from $\RR^n$ is \emph{not} equal to the product metric, $\dot{x} \dot{y} + \innerproduct{\dot{\vect{v}}}{\dot{\vect{w}}}$, of $\RR_{>0}$ and $\Sp^{n-1}$.
Instead, it is a so-called \emph{warped product} \cite[Chapter 7]{ONeill1983} with the identity map as \emph{warping function}.
Recall from \cite[Chapter 7]{ONeill1983} that if we are given Riemannian manifolds $(\Var{M},g)$ and $(\Var{N}, h)$, then we can construct their warped product with warping function $f \colon \Var{M} \to \RR$ as \(
\Var{M} \times_f \Var{N} := (\Var{M}\times\Var{N}, g \times_f h),
\)
where
\begin{align*}
	(g \times_f h)_{(m,n)}( (\dot{\vect m}_1, \dot{\vect n}_1), (\dot{\vect m}_2, \dot{\vect n}_2) ) = g_m(\dot{\vect m}_1, \dot{\vect m}_2) + f^2(m) \cdot h_n(\dot{\vect n}_1, \dot{\vect n}_2).
\end{align*}

Rather than considering only the Euclidean geometry of $\RR^n_*$, we also consider its \emph{$\alpha$-warped geometries} in this paper; that is,
\begin{align}\label{eqn_ambient_geometry}
	\Var{R}_\alpha = (\RR^n_*, \varsigma^\alpha) := \RR_{>0} \times_{\alpha \mathrm{Id}} \Sp^{n-1} \quad\text{with } \alpha > 0,
\end{align}
whose warped product metric satisfies 
\begin{align}
	\varsigma^{\alpha}_{\lambda\vect{u}} (\dot{x} \vect{u} + \lambda \dot{\vect{v}}, \dot{y} \vect{u} + \lambda \dot{\vect{w}} )
	= \dot{x} \dot{y} + (\alpha \lambda)^2 \langle \dot{\vect{v}}, \dot{\vect{w}} \rangle.
\end{align}
If $\alpha = 1$, then the $1$-warped geometry is the Euclidean geometry of $\RR^n_*$, with straight lines as geodesics between points.
If $\alpha > 1$, then distances in the spherical directions will be elongated, implying that geodesics between points will tend to be forced toward the origin.
On the other hand, if $0 < \alpha < 1$, then spherical movements are easier, implying that geodesics between points will bend away from the origin.
This is visualized in \cref{fig_alphawarping_right}.

\subsection{The \texorpdfstring{$\alpha$}{alpha}-warped Segre--Veronese submanifold} \label{sec_sub_SV}

The punctured space $\RR^n_*$ is the most basic example (with $\vect{k}=(1)$) of the $\vect{k}$th Segre--Veronese manifold
\begin{align}
	\Var{S}^\vect{k}_\pm := \set{ \lambda \vect{x}_1^{\otimes k_1} \otimes \cdots \otimes \vect{x}_d^{\otimes k_d} \mid \lambda\in\RR_*,\; \vect{x}_i\in\Sp^{n_i-1},\, i=1,\ldots,d },
\end{align}
where $\vect{x}_i^{\otimes k_i}$ is the $k_i$-fold \emph{tensor product} of $\vect{x}_i$ with itself, i.e., $\vect{x}_i^{\otimes k_i} := \vect{x}_i \otimes\cdots\otimes \vect{x}_i$. 
Recall that the tensor product of vectors in $\RR^{n_j}$, $j = 1$, \dots, $d$, can be embedded naturally into the space of $n_1\times\cdots\times n_d$ arrays, as follows:
\begin{align*}
	(\vect{x}_1 \otimes\cdots\otimes \vect{x}_d)(i_1,\ldots,i_d) := \vect{x}_1(i_1) \cdots \vect{x}_d(i_d), \qquad 1 \le i_j \le n_j,\, j=1,\ldots,d,
\end{align*}
where $\vect{x}_j \in \RR^{n_j}$ and $\vect{x}_j(i_j)$ denotes the $i_j$th coefficient of $\vect{x}_j$ with respect to a basis of $\RR^{n_j}$; see, e.g., \cite{Lim2021}.
That is, the coordinates of a tensor product are obtained by multiplying the coordinates of all vectors in all possible ways.

Let us also define the \enquote{positive} Segre--Veronese manifold
\begin{align}
	\Var{S}^\vect{k} := \set{ \lambda \vect{u}_1^{\otimes k_1} \otimes\cdots\otimes \vect{u}_d^{\otimes k_d} \mid \lambda \in \RR_{>0},\; \vect{u}_i \in \Sp^{n_i-1},\, i=1,\ldots,d },
\end{align}
which is a smooth embedded submanifold (see e.g. \cite[Section 4.3]{Landsberg2012} and \cite[Chapter 5]{Lee13}) that is diffeomorphic to
\(
\RR_{>0} \times \Sp(\Var{S}^\vect{k}).
\)
We can thus equip it with the $\alpha$-warped metric that it inherits as a submanifold of $\Var{R}_\alpha = \RR_{>0} \times \Sp^{N-1}$ from \cref{eqn_ambient_geometry}, where $N = n_1^{k_1}\cdots n_d^{k_d}$. The resulting Riemannian submanifold will be denoted by
\begin{align}
	\Var{S}_\alpha^\vect{k} := \RR_{>0} \times_{\alpha \mathrm{Id}} \Sp(\Var{S}^\vect{k}) = (\Var{S}^\vect{k}, \varsigma^\alpha) \subset \Var{R}_\alpha,
\end{align}
and we will refer to it as the \emph{$\alpha$-warped Segre--Veronese manifold}.

\begin{remark}
If all $k_i$'s are even, then $\Var{S}_\pm^\vect{k}$ consists of two \emph{connected components}, namely
the positive Segre--Veronese manifold $\Var{S}^\vect{k}$ and its mirror image along the origin, namely the \enquote{negative} Segre--Veronese manifold $-\Var{S}^{\vect{k}}$. 
This means that $p\in\Var{S}^\vect{k}$ and $q\in-\Var{S}^\vect{k}$ are not connected by any curve in $\Var{S}^\vect{k}_\pm$. In particular, there will be no geodesics connecting them.
Nevertheless, the two connected components are \emph{isometric}, as $\varsigma_{(-\lambda)\vect{u}}^\alpha = \varsigma_{\lambda \vect{u}}^\alpha$ (herein, the metric should be interpreted as being defined on $\RR_{\ne0}$), and we have $-\Var{S}^\vect{k} = \RR_{<0} \times_{\alpha\mathrm{Id}} \Sp^{N-1}$. As these two connected components are isometric manifolds, their geometry is the same.

On the other hand, if at least one of the $k_i$'s is odd, then it is easy to show that $\Var{S}_\pm^\vect{k} = \Var{S}^\vect{k} = -\Var{S}^{k}$. That is, the \enquote{regular}, \enquote{positive}, and \enquote{negative} flavors of the Segre--Veronese manifold all coincide.

Therefore, we will henceforth study the component $\Var{S}^\vect{k}$ without loss of generality.
\end{remark}

\subsection{The \texorpdfstring{$\alpha$}{alpha}-warped pre-Segre--Veronese manifold} \label{sec_pre_SV}

We study the geometry of $\Var{S}_\alpha^\vect{k}$ through the lens of the tensor product $\otimes$. By definition, $\Var{S}^\vect{k}$ is the image of
\begin{align*}
	\otimes \colon \RR_{>0} \times \Sp^{n_1-1} \times\cdots\times \Sp^{n_d-1} &\longrightarrow \RR^{n_1^{k_1}\times\cdots\times n_d^{k_d}}_*,\nonumber\\
	(\lambda, \vect{u}_1, \ldots, \vect{u}_d) &\longmapsto \lambda \vect{u}_1^{\otimes k_1} \otimes \cdots \otimes \vect{u}_d^{\otimes k_d}.
\end{align*}
As the domain is a product manifold, its tangent space at $(\lambda, \vect{u}_1, \ldots, \vect{u}_d)$ is 
\begin{align*}
	\Tang_\lambda \RR_{>0} \times \Tang_{\vect{u}_1} \Sp^{n_1-1} \times\cdots\times \Tang_{\vect{u}_d} \Sp^{n_d-1} = \RR \times \vect{u}_1^\perp \times\cdots\times \vect{u}_d^\perp.
\end{align*}
We can equip it with the $\alpha$-warped product metric
\begin{align}\label{eqn_warped_metric_preSV}
	g_{(\lambda,\vect{u}_1,\ldots,\vect{u}_d)}
	\bigl( 
	(\dot{x}, \dot{\vect{u}}_1, \ldots, \dot{\vect{u}}_d), 
	(\dot{y}, \dot{\vect{v}}_1, \ldots, \dot{\vect{v}}_d)
	\bigr)
	:= 
	\dot{x} \dot{y} + (\alpha\lambda)^2 \sum_{i=1}^d k_i \langle \dot{\vect{u}}_i, \dot{\vect{v}}_i \rangle.
\end{align}
The resulting Riemannian manifold will be called the \emph{$\alpha$-warped pre-Segre--Veronese manifold}, and it will be denoted by
\begin{equation}\label{eqn_def_preSV}
	\Var{P}_\alpha^\vect{k}
	= (\RR_{>0}, \langle\cdot,\cdot\rangle) \times_{\alpha \mathrm{Id}} \underbrace{( (\Sp^{n_1-1}, k_1\langle\cdot,\cdot\rangle) \times\cdots\times (\Sp^{n_d-1}, k_d \langle\cdot,\cdot\rangle) )}_{\Sp^{\vect{n},\vect{k}}}.
\end{equation}
Note that the standard Euclidean inner product on the $i$th sphere is weighted by a factor $k_i$. The geometry of such a sphere $(\Sp^{n_i-1}, k_i\langle\cdot,\cdot\rangle)$ is equivalent to the usual Euclidean geometry of the sphere $(\Sp^{n_i-1}_{\sqrt{k_i}}, \langle\cdot,\cdot\rangle)$ of radius $\sqrt{k_i}$.

The key result is that the tensor product takes the pre-Segre--Veronese manifold $\Var{P}_\alpha^\vect{k}$ in a \new{standard} way to the Segre--Veronese manifold $\Var{S}_\alpha^\vect{k}$.
Specifically, it is a \emph{normal Riemannian covering map} \cite{ONeill1983}, which means that (i) every point $\tensor{T} \in \Var{S}_\alpha^{\vect{k}}$ has the same number, \new{$P$}, of preimages in $\Var{P}_\alpha^\vect{k}$, (ii) every point $\tensor{T} \in \Var{S}_\alpha^{\vect{k}}$ has a neighborhood $U_{\tensor{T}}$ such that its preimage $\otimes^{-1}(U_{\tensor{T}})$ is a disjoint union of \new{$P$} domains, and (iii) on each of those domains, $\otimes$ is an isometry.
Property (iii) is called \emph{local isometry}.

A diffeomorphism $\iota \colon \Var{P}_\alpha^\vect{k} \to \Var{P}_\alpha^\vect{k}$ that satisfies $\otimes \circ \iota = \otimes$ is called a \emph{deck transformation}.
\new{Intuitively}, it shuffles the preimages of the covering map.
A deck transform that is also an isometry is called an \emph{isometric deck transform}.

\begin{lemma} \label{lem_riemannian_covering}
	The map
	\begin{align*}
  		\otimes \colon \Var{P}_\alpha^\vect{k} \to \Var{S}_\alpha^\vect{k}, \quad (\lambda, \vect{u}_1, \ldots, \vect{u}_d) \mapsto \lambda \vect{u}_1^{\otimes k_1}\otimes\cdots\otimes\vect{u}_d^{\otimes k_d}
 	\end{align*}
	is a normal Riemannian covering where the isometric deck transforms are of the form $\imath_\sigma(\lambda, \vect{u}_1,\ldots,\vect{u}_d) = (\lambda,\sigma_1\vect{u}_1,\ldots,\sigma_d \vect{u}_d)$ with $\sigma_i\in\set{-1, 1}$ and $\sigma_1^{k_1}\cdots\sigma_d^{k_d} = 1$.
\end{lemma}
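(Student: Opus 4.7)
The plan is to verify the three properties defining a normal Riemannian covering separately: that $\otimes$ is a local isometry; that its fibers coincide with the orbits of the proposed deck group; and that each $\imath_\sigma$ is itself a metric isometry, from which normality and the existence of evenly covered neighborhoods will follow.

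First, I would compute $\deriv\otimes$ at $(\lambda, \vect{u}_1, \ldots, \vect{u}_d)$ by the Leibniz rule. This gives a radial term $\dot x\, \vect{u}_1^{\otimes k_1} \otimes \cdots \otimes \vect{u}_d^{\otimes k_d}$ plus, for each $i$, a sum of $k_i$ spherical terms in which one copy of $\vect{u}_i$ is replaced by $\dot{\vect{u}}_i$. Because $\langle \vect{u}_i, \dot{\vect{u}}_i \rangle = 0$ and every $\vect{u}_i$ is of unit length, these summands are mutually orthogonal in the ambient Euclidean inner product, each of squared norm $\|\dot{\vect{u}}_i\|^2$, so the total spherical part has squared norm $\sum_{i=1}^d k_i \|\dot{\vect{u}}_i\|^2$ and is orthogonal to the radial part. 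Substituting into the definition of $\varsigma^\alpha$ recovers exactly the pre-Segre--Veronese metric of \cref{eqn_def_preSV}. Polarization extends this to inner products, so $\otimes$ is a local isometry, hence in particular a local diffeomorphism.

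Next, I would describe the fibers. Suppose $\otimes(\mu, \vect{v}_1, \ldots, \vect{v}_d) = \otimes(\lambda, \vect{u}_1, \ldots, \vect{u}_d)$. Taking Euclidean norms forces $\mu = \lambda$ by positivity. Viewing the common image as a rank-one element of $\RR^{n_1^{k_1}} \otimes \cdots \otimes \RR^{n_d^{k_d}}$, the classical uniqueness of rank-one decompositions up to scaling yields $\vect{v}_i^{\otimes k_i} = c_i \vect{u}_i^{\otimes k_i}$ with $c_1 \cdots c_d = 1$ and each $|c_i| = 1$. A standard analysis of the real Veronese map $\vect{u} \mapsto \vect{u}^{\otimes k_i}$ then gives $\vect{v}_i = \sigma_i \vect{u}_i$ with $\sigma_i \in \{-1,+1\}$ and $c_i = \sigma_i^{k_i}$. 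The compatibility condition collapses to $\sigma_1^{k_1} \cdots \sigma_d^{k_d} = 1$, so every fiber is exactly the orbit of $(\lambda, \vect{u}_1, \ldots, \vect{u}_d)$ under the finite abelian group $G$ of admissible sign patterns.

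Finally, each $\imath_\sigma$ is a self-inverse diffeomorphism satisfying $\otimes \circ \imath_\sigma = \otimes$ by multilinearity and the defining constraint, and it preserves the pre-Segre--Veronese metric because it acts by orthogonal maps on each sphere factor while fixing $\lambda$. Hence every $\imath_\sigma$ is an isometric deck transform, and by the fiber description these exhaust the deck group. Normality then follows from the transitive action of $G$ on each fiber, and evenly covered neighborhoods can be constructed around any $\tensor{T} \in \Var{S}_\alpha^\vect{k}$ by taking a local-diffeomorphism neighborhood of one preimage, shrinking it so that its $G$-translates are pairwise disjoint (possible since $G$ is finite and acts freely on the fiber), and unioning them. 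The main obstacle is the fiber analysis, which requires invoking uniqueness of partially symmetric rank-one decompositions and carefully tracking which real sign choices are consistent; the metric verification is a routine computation once the differential is written out.
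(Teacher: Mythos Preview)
Your proposal is correct and follows essentially the same route as the paper: verify that $\otimes$ is a local isometry by computing its differential and checking that the ambient $\varsigma^\alpha$-metric pulls back to the pre-Segre--Veronese metric, identify the fibers with the orbits of the sign group via uniqueness of rank-one decompositions, and observe that each $\imath_\sigma$ is an isometry. The only difference is that the paper outsources the smooth-covering property to a citation, whereas you spell out the evenly-covered-neighborhood construction from the finite free action directly; both are fine.
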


An elementary but crucial fact about normal Riemannian coverings concerns the lengths of curves in the covered manifold.
The next result enables the computation of geodesics in $\Var{S}_\alpha^\vect{k}$ by studying the geodesics of the warped product $\Var{P}_\alpha^\vect{k}$.

\begin{lemma}\label{lem_basic_covering_property}
	Let $\phi \colon \Var{M} \to \Var{N}$ be a normal Riemannian covering,
	$\gamma \colon [0, 1] \to \Var{N}$ a smooth curve, and
	$p \in \Var{M}$ any point such that $\phi(p)=\gamma(0).$
	Then, there exists a unique smooth lift $\widetilde{\gamma} \colon [0, 1] \to \Var{M}$ with $\widetilde{\gamma}(0)=p$ and $\phi \circ \widetilde{\gamma} = \gamma$ such that $\ell(\gamma) = \ell(\widetilde{\gamma})$.

	In particular, a minimizing geodesic segment in $\Var{N}$ lifts to a minimizing geodesic segment in $\Var{M}$.
\end{lemma}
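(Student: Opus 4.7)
The plan is to deduce both statements from two standard facts about covering spaces and local isometries: the path-lifting property, and the fact that a local isometry preserves lengths of smooth curves pointwise through its differential.

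First, I would obtain existence and uniqueness of the lift $\widetilde{\gamma}$ from the classical path-lifting property of covering maps \cite[Chapter 11]{Lee13}: given the continuous curve $\gamma \colon [0,1] \to \Var{N}$ and a basepoint $p \in \phi^{-1}(\gamma(0))$, there is a unique continuous lift $\widetilde{\gamma} \colon [0,1] \to \Var{M}$ with $\widetilde{\gamma}(0) = p$ and $\phi \circ \widetilde{\gamma} = \gamma$. Smoothness of $\widetilde{\gamma}$ follows because $\phi$ is a local diffeomorphism: around each $\widetilde{\gamma}(t)$ there is an open neighborhood $U$ on which $\phi|_U$ is a diffeomorphism onto its image, and locally $\widetilde{\gamma} = (\phi|_U)^{-1} \circ \gamma$, which is smooth.

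Second, I would establish the length equality. Differentiating $\phi \circ \widetilde{\gamma} = \gamma$ at $t$ gives $\deriv\phi_{\widetilde{\gamma}(t)}(\widetilde{\gamma}'(t)) = \gamma'(t)$. Because $\phi$ is a Riemannian covering, $\deriv\phi_{\widetilde{\gamma}(t)}$ is a linear isometry on each tangent space, so
\begin{equation*}
    \norm{\widetilde{\gamma}'(t)}_{\widetilde{\gamma}(t)} = \norm{\gamma'(t)}_{\gamma(t)} \quad\text{for all } t \in [0,1].
\end{equation*}
Integrating over $[0,1]$ yields $\ell(\widetilde{\gamma}) = \ell(\gamma)$.

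Third, for the minimizing geodesic statement, suppose $\gamma$ is a minimizing geodesic segment on $\Var{N}$ from $\gamma(0)$ to $\gamma(1)$ and let $\widetilde{\gamma}$ be its lift starting at $p$, with endpoint $q := \widetilde{\gamma}(1)$. The curve $\widetilde{\gamma}$ is itself a geodesic because being a geodesic is a local condition and $\phi$ is a local isometry. For minimality, I argue by contradiction: if $\widetilde{\eta} \colon [0,1] \to \Var{M}$ were a piecewise smooth curve from $p$ to $q$ with $\ell(\widetilde{\eta}) < \ell(\widetilde{\gamma})$, then its projection $\eta := \phi \circ \widetilde{\eta}$ would be a piecewise smooth curve from $\gamma(0)$ to $\gamma(1)$ on $\Var{N}$, and the same pointwise isometry argument (applied to each smooth piece of $\widetilde{\eta}$) gives $\ell(\eta) = \ell(\widetilde{\eta}) < \ell(\widetilde{\gamma}) = \ell(\gamma)$, contradicting minimality of $\gamma$.

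The main subtlety I anticipate is simply being careful that everything happens at the level of the \emph{same} lift: the lift starting at the prescribed $p$ need not reach a particular point $q'$ in $\phi^{-1}(\gamma(1))$ of our choosing, but only a specific one $q = \widetilde{\gamma}(1)$, and the competing path $\widetilde{\eta}$ in the minimality argument must be chosen with the same endpoints $p$ and this particular $q$. Once this is kept straight, the two standard ingredients (path lifting and local-isometric preservation of speed) yield the result directly.
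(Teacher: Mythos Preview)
Your proposal is correct and follows essentially the same approach as the paper: invoke the standard path-lifting property for the unique smooth lift, use that $\phi$ is a local isometry to equate lengths, and argue minimality by contradiction via the projection $\phi\circ\widetilde{\eta}$. The only cosmetic difference is that the paper establishes $\ell(\widetilde{\gamma})=\ell(\gamma)$ by partitioning into evenly covered neighborhoods and summing, whereas you obtain it directly from the pointwise equality $\norm{\widetilde{\gamma}'(t)}=\norm{\gamma'(t)}$; these are equivalent.
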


These lemmas are proved in \cref{sec:proof_of_lemmas}.

By \cref{lem_basic_covering_property}, if we want to find a minimizing geodesic \new{that connects} $\tensor{P}$ \new{and} $\tensor{Q} \in \Var{S}_\alpha^{\vect{k}}$, it suffices to enumerate the geodesics \new{connecting their} preimages \new{in the $\alpha$-warped pre-Segre--Veronese manifold} and pick the shortest one. This is essentially what our proposed matchmaking algorithm will do in \cref{prop_matching_algorithm}.

\new{The geodesics of $\alpha$-warped product manifolds in general have been studied by Chen \cite{Chen1999}. In particular, \cite[Lemma 4.3]{Chen1999} states that geodesics in the warped product $\Var{M} \times_f \Var{N}$ have the property that their projection onto the second factor $\Var{N}$ also yields a geodesic, albeit not parameterized to be of constant speed. We can effectively leverage this result to parameterize geodesic segments passing through a given point in the pre-Segre--Veronese manifold.}

\new{\begin{lemma}\label{lem_basic_geodesic_form}
\begin{subequations}\label{eqn_paths}
Let $p=(\lambda,\vect{u}_1,\dots,\vect{u}_d) \in \Var{P}_{\alpha}^\vect{k}$ be a point in the $\alpha$-warped pre-Segre--Veronese manifold. If $\sigma \subset \Var{P}_\alpha^\vect{k}$ is a geodesic passing through $p$, then there exist $\eta \in \RR_{>0}$, $B \in \RR$, and tangent vectors $\dot{\vect{v}}_i \in \Tang_{\vect{u}_i} \mathbb{S}^{n_i - 1}$ for $i=1,\dots,d$ such that
\begin{equation}\label{eqn_geodesic_segment}
	\sigma(s) = \left(\mu(s),\vect{v}_1(s),\dots,\vect{v}_d(s)\right), \quad s\in[0,\eta),
\end{equation}
parameterizes a geodesic segment with $\sigma(0) = p$ and either
\[
\sigma(s) = (\lambda + s, \vect{u}_1,\dots,\vect{u}_d)
\]
if all $\dot{\vect{v}}_i = 0$, or
\begin{enumerate}
 \item $\sigma(s)$ has unit spherical speed, i.e.,
\begin{align}\label{eqn_unit_spherical_speed}
	\norm{( \vect{v}_1'(s) , \dots, \vect{v}_d'(s))}_{(\vect{v}_1(s),\ldots,\vect{v}_d(s))} =1;
\end{align}
 \item the real curve is
 \begin{align}\label{eqn_musol}
	\mu(s) = \lambda \frac{\cos(B)}{\cos(\alpha s + B )};
 \end{align}
 \item the spherical curves are great circles, i.e.,
\begin{equation}\label{eqn_solsphere}
	\vect{v}_i(s)
	= \exp^\mathbb{S}_{\vect{u}_i} ( s \dot{\vect{v}}_i ),
	\quad i=1,\ldots,d,
\end{equation}
where $\exp^\mathbb{S}_{\vect{u}_i}$ is the exponential map of the sphere $\mathbb{S}^{n_i-1}$ as in \cref{eqn_exp_sphere}.
\end{enumerate}
\end{subequations}
\end{lemma}}
\begin{proof}
	\new{Since $\Var{P}_\alpha^\vect{k} = \RR_{>0} \times_{\alpha\mathrm{Id}} \mathbb{S}^{\vect{n},\vect{k}}$ is a warped product, Chen's result \cite[Lemma 4.3]{Chen1999} implies that a geodesic in $\Var{P}_\alpha^\vect{k}$ projects to a geodesic of $\mathbb{S}^{\vect{n},\vect{k}}$. The geometry of $\mathbb{S}^{\vect{n},\vect{k}}$ is the standard product geometry of the Euclidean spheres $\mathbb{S}^{n_i-1}_{\sqrt{k_i}}$ of radius $\sqrt{k_i}$. Hence, the geodesics of $\mathbb{S}^{\vect{n},\vect{k}}$ are the products of the geodesics in the individual spheres $\mathbb{S}^{n_i-1}_{\sqrt{k_i}}$, which are all great circles.
	This shows that there exist tangent vectors $\dot{\vect{v}}_i$ so that all $\vect{v}_i(s)$'s are of the form \cref{eqn_solsphere}.}

	The coefficient $\mu \colon \mathbb{R} \to \mathbb{R}$ \new{in \cref{eqn_geodesic_segment}} is determined by solving the \textit{Euler--Lagrange equations} of the length functional, which in the $\alpha$-warped geometry becomes
	\begin{align} \label{eqn_length_functional}
		D
		\nonumber &= \int_0^{1} \sqrt{\langle \sigma'(s),\sigma'(s) \rangle_{\sigma(s)}}\dd{s} \\
 		&=\int_0^{1} \sqrt{ \mu'(s)^2+ \alpha^2 \mu(s)^2 \norm{( \vect{v}_1'(s) , \dots, \vect{v}_d'(s))}_{(\vect{v}_1(s),\ldots,\vect{v}_d(s))}^2 }\ \dd{s}.
	\end{align}

	\new{If all tangent vectors $\dot{\vect{v}}_i = 0$, then the great circles $\vect{v}_i(s)=\vect{v}_i$ are constant for all $s$. Therefore, the above length functional reduces to $D = \int_{0}^1 \sqrt{\mu'(s)^2} \dd{s}$, which is the standard length functional of the real line. The geodesics in Euclidean space are straight lines, so $\mu(s)$ must be a parameterization of a straight line. This proves the form of $\sigma$ in this special case.}

	\new{In the remainder of the proof, we treat the case where some $\dot{\vect{v}}_i \ne 0$. Then, we can assume
	$\sigma$ has been parameterized so that \cref{eqn_unit_spherical_speed} holds. The critical points of the Euler--Lagrange equation are the solutions of $\frac{\partial L}{\partial \mu} = \frac{\partial}{\partial s} \frac{\partial L}{\partial \mu'}$, where $L(s, \mu, \mu') = \sqrt{\mu'(s)^2 + \alpha^2 \mu(s)^2}$.}
	This ultimately leads to the differential equation
	\(
	\alpha^2 \mu(s)^2 + 2 \mu'(s)^2 - \mu(s) \mu''(s) = 0
	\)
	whose solution is of the form
	\(
		\mu(s) = \frac{A}{\cos(\alpha s + B )}.
	\)
	Solving for the constant $A$ by exploiting the starting conditions \new{$\sigma(0) = p$, so $\mu(0)=\lambda$}, we arrive at $A = \lambda \cos(B)$. This proves the form \cref{eqn_paths} holds.
\end{proof}

\new{Note that the characterization of geodesics in \cref{lem_basic_geodesic_form} is not complete in the sense there are unspecified quantities ($\eta, B, \dot{\vect{v}}_1, \dots, \dot{\vect{v}}_d$). This is as expected, because geodesics are solutions to the Euler--Lagrange differential equation. Unique solutions to this second-order differential equation require two interpolation conditions, such as passing through a point and its tangent vector (yielding the exponential map), or passing through two points (yielding the logarithmic map).}

\section{The exponential map}\label{sec_exp}

Before presenting the main original contribution in \cref{sec_log}, we briefly investigate the exponential maps of (pre-)Segre--Veronese manifolds.
The following result is a generalization of~\cite[Theorem 1]{SVV2022} and its proof, which dealt only with the usual Euclidean geometry (i.e., $\alpha=1$) of the Segre manifold (i.e., $\vect{k}=(1,\ldots,1)$).

\begin{proposition}[Exponential map of $\Var{P}_\alpha^\vect{k}$] \label{prop_geodesics}
	Let
	\begin{align*}
		p = (\lambda,\vect{u}_1,\dots,\vect{u}_d) \in \Var{P}_\alpha^\vect{k} \quad\text{and}\quad
		\dot{p} = (\dot \lambda,\dot{\vect{u}}_1,\dots,\dot{\vect{u}}_d) \in \Tang_p \Var{P}_\alpha^\vect{k}.
	\end{align*}
	Then, \new{the exponential map satisfies}
	\begin{align*}
		\exp_p(\dot{p}) = \left( \sqrt{(\lambda + \dot{\lambda})^{2} + (\lambda \alpha N)^{2}},
		\new{\exp{}^\mathbb{S}_{\vect{u}_1}(a \dot{\vect{u}}_1),
		\dots,
		\exp{}^\mathbb{S}_{\vect{u}_d}(a \dot{\vect{u}}_d)}
		\right),
	\end{align*}%
	where
	\begin{align*}
		N = \sqrt{\sum_{i=1}^d k_i \norm{\dot{\vect{u}}_i}^2} \quad\text{and}\quad
		a = \frac{1}{\alpha N} \left( \frac{\pi}{2} - \arctan\left( \frac{\lambda + \dot{\lambda}}{\lambda \alpha N} \right) \right).
	\end{align*}
	If $N = 0$ and $-\lambda < \dot{\lambda}$, the geodesics are straight lines:
	\begin{align*}
		\exp_{p}(\dot{p}) = (\lambda+\dot{\lambda}, \vect{u}_1, \ldots, \vect{u}_d).
	\end{align*}
\end{proposition}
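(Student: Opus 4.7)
The plan is to reduce the computation to a two-dimensional problem by combining the warped-product structure with the product structure of the fiber, and then recognize the reduced problem as finding geodesics in the Euclidean plane expressed in polar coordinates. The main obstacle is the reduction step; once the full geodesic equation has been shown to factor through a 2D equation, the remainder is a classical computation with straight lines.

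\textbf{Reduction to a two-dimensional problem.} By standard warped-product theory~\cite[Chapter 7]{ONeill1983}, if $\gamma(t) = (\lambda(t),\phi(t))$ is a geodesic in $\RR_{>0}\times_{\alpha\mathrm{Id}} F$ with $F = \Sp^{\vect{n},\vect{k}}$, then $\phi(t)$ is a pregeodesic in $F$, i.e., a geodesic up to a single reparametrization $s(t)$. Since $F$ is a Riemannian product of round spheres, such a geodesic decomposes into great-circle motions on each factor, with $\vect{u}_i$ rotating at its intrinsic angular rate $\|\dot{\vect{u}}_i\|$. Matching $\gamma'(0) = \dot{p}$ then forces
\[
  \vect{u}_i(t) = \vect{u}_i \cos\bigl(\|\dot{\vect{u}}_i\|\, s(t)\bigr) + \frac{\dot{\vect{u}}_i}{\|\dot{\vect{u}}_i\|} \sin\bigl(\|\dot{\vect{u}}_i\|\, s(t)\bigr),
\]
for a single scalar reparametrization $s(t)$ shared by all $i$, with $s(0) = 0$ and $s'(0) = 1$. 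Inserting this ansatz into the warped metric collapses the energy to $\lambda'(t)^2 + (\alpha N \lambda(t))^2 s'(t)^2$, which is the arclength integrand of a two-dimensional surface with metric $d\lambda^2 + (\alpha N \lambda)^2 ds^2$.

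\textbf{Recognition as the Euclidean plane.} The change of variables $\theta := \alpha N s$ turns this 2D metric into $d\lambda^2 + \lambda^2 d\theta^2$, the Euclidean metric on the right half-plane in polar coordinates. Geodesics are therefore straight lines in Cartesian coordinates. Transporting the initial data through the polar-to-Cartesian Jacobian at $(\lambda,0)$ yields starting point $(\lambda,0)$ and initial Cartesian velocity $(\dot{\lambda},\lambda\alpha N)$, so the geodesic is the straight line $(x(t),y(t)) = (\lambda + t\dot{\lambda},\, t\lambda\alpha N)$.

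\textbf{Reading off the formulas.} At $t = 1$, the final Cartesian point $(\lambda+\dot{\lambda},\lambda\alpha N)$ has polar radius $\sqrt{(\lambda+\dot{\lambda})^2 + (\lambda\alpha N)^2}$, which is the claimed $\lambda(1)$. Since $\lambda\alpha N > 0$ when $N > 0$, this point lies in the open upper half-plane, so its polar angle equals $\theta(1) = \tfrac{\pi}{2} - \arctan\bigl(\tfrac{\lambda+\dot{\lambda}}{\lambda\alpha N}\bigr)$, a form that extends continuously across $\lambda+\dot{\lambda}=0$. Since $a_i(1) = \|\dot{\vect{u}}_i\|\,s(1) = (\|\dot{\vect{u}}_i\|/(\alpha N))\,\theta(1)$, the announced expressions for the $a_i$ follow. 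The degenerate case $N = 0$ corresponds to no motion in the fiber, so $\lambda(t) = \lambda + t\dot{\lambda}$; positivity on $[0,1]$ demands $-\lambda < \dot{\lambda}$.
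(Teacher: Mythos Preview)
Your proof is correct and takes a genuinely different, more geometric route than the paper. Both arguments begin identically: invoke O'Neill's warped-product theory to conclude that the fiber component is a pregeodesic in $\Sp^{\vect{n},\vect{k}}$, hence a product of great-circle arcs governed by a single reparametrization $s(t)$. From there the paths diverge. The paper writes down the Euler--Lagrange equation for the length functional in the variable $s$, solves the resulting second-order ODE to obtain $\mu(s)=A/\cos(\alpha s+B)$, determines $A,B$ from initial data, and then solves a further first-order ODE to recover the original parameter $t$---two of these steps are relegated to appendices. You instead observe that the reduced two-dimensional metric $d\lambda^2+(\alpha N\lambda)^2\,ds^2$ is flat: the substitution $\theta=\alpha N s$ exhibits it as the punctured Euclidean plane in polar coordinates, so geodesics are Cartesian straight lines and the formulas drop out by reading off polar radius and angle at $t=1$. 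This bypasses all the ODE work and explains \emph{why} the paper's solution $\mu(s)=A/\cos(\alpha s+B)$ appears---it is precisely the polar equation of a straight line. Your approach is shorter and more conceptual; the paper's is more self-contained for a reader unfamiliar with the flatness observation. One point you could make more explicit is why the reduced curve $(\lambda(t),s(t))$ is a geodesic of the two-dimensional surface and not merely a curve with the same speed: this follows because $\RR_{>0}\times_{\alpha\mathrm{Id}}\psi(\RR)$ is a totally geodesic submanifold of the full warped product (geodesics of $F$ being totally geodesic, and warped products over totally geodesic fibers remaining totally geodesic), but your energy-collapse phrasing leaves this implicit.
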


\begin{proof}
	\new{Recall that the exponential map evaluated at $\dot{p}$ is equivalent to evaluating the constant-speed} geodesic $\gamma(t) = (\lambda(t),\vect{u}_1(t),\dots,\vect{u}_d(t))$ in $\Var{P}_\alpha^\vect{k}$ with starting conditions $\gamma(0)=p$ and $\gamma'(0) = \dot{p}$ \new{at $t=1$}.

	If $N=0$, \new{the form of the geodesic, and hence exponential map, follows immediately from \cref{lem_basic_geodesic_form}.}
	So we only need to consider the case $N > 0$.

	We can apply the change of variable $t \mapsto s(t)$ with $s(0) = 0$ and $s'(0) > 0$ such that $\gamma(t) = \sigma(s(t))$, \new{where $\sigma(s)$ is as in \cref{eqn_paths} from \cref{lem_basic_geodesic_form}.}
	\new{Using the initial condition $\gamma'(0)=\dot{p}$ and $\frac{\dd}{\dd t} \exp_{\vect{u}_i}^\mathbb{S}(s(t) \dot{\vect{v}}_i) = \dot{\vect{v}}_i s'(t)$, we determine that
	\[
 	\dot{\vect{u}}_i = \vect{v}_i'(0) = s'(0) \dot{\vect{v}}_i,
	\]
	where $\dot{\vect{v}}_i$ are the tangent vectors from \cref{lem_basic_geodesic_form}.
	Moreover,} \(\norm{\vect{v}_i'(0)}_{\vect{v}_i(0)} s'(0) = \sqrt{k_i} \norm{\dot{\vect{u}}_i}\),
	$i = 1, \dots, d$, from which we find that $s'(0) = N$, where $N$ is as in the statement of the proposition.
	Solving for the constant $B$ \new{from \cref{lem_basic_geodesic_form}} by exploiting the starting condition \new{$\mu'(0) = \dot{\lambda}$}, we arrive at $B = \arctan(\dot{\lambda} / (\alpha \lambda N))$.
	
	\new{Let $A = \lambda \cos(B)$.} We have that
	\begin{align}
		\norm{\gamma'(t)}_{\gamma(t)} ={}&
		\abs{s'(t)} \sqrt{ {\mu}'\mleft(s(t)\mright)^2 + \alpha^2 \mu\mleft(s(t)\mright)^2 } \nonumber\\
		={}& \abs{s'(t)} \sqrt{\frac{\alpha^2 A^2\sin^2\mleft(\alpha s(t)+B\mright)}{\cos^4\mleft(\alpha s(t)+B\mright)}+\frac{\alpha^2 A^2}{\cos^2\mleft(\alpha s(t)+B\mright)}} \nonumber\\
		={}& \abs{s'(t)} \frac{\alpha A}{\cos^2(\alpha s(t) + B)}. \label{eqn_proof_step}
	\end{align}
	Since $\sigma(s)$ is injective, the parameterization $s(t)$ is a monotone increasing function.
	Hence, $s'(t) \geq 0$.
	By plugging $\norm{\gamma'(t)}_{\gamma(t)} = \norm{\dot{p}}_{p} = \sqrt{\dot{\lambda}^{2} + (\alpha \lambda N)^{2}}$ into \cref{eqn_proof_step}, we obtain the differential equation
	\begin{align}\label{eq:ivp}
	  s'(t) = \cos^2(\alpha s(t) + B) \frac{\sqrt{\dot{\lambda}^{2} + (\alpha \lambda N)^{2}}}{\alpha A}.
	\end{align}
	Combining this with the initial condition $s(0) = 0$, we have the solution
	\begin{equation} \label{eqn_reparameterization}
		s(t) = \alpha^{-1} \left( \frac{\pi}{2} - \arctan \left( \frac{\lambda + \dot{\lambda} t}{\alpha \lambda N t} \right) \right).
	\end{equation}
	Verifying that this solves \cref{eq:ivp} is a straightforward computation; see \cref{sub:Proof of eqn_reparameterization}.
	
	To obtain the exponential map, we now just have to evaluate $\gamma(1)$. For the first component of $\gamma(1)$ we find, after some computations (see \cref{sub:Proof of eq:gamma_of_1}), that
	\begin{align}\label{eq:gamma_of_1}
		\mu(s(1)) ={}& \sqrt{(\lambda + \dot{\lambda})^{2} + (\alpha \lambda N)^{2}}.
	\end{align}
	The other components $\vect{v}_i(s(1))$ are obtained immediately by plugging the expression of $s(t)$ from \cref{eqn_reparameterization} into \cref{eqn_solsphere}.
	This concludes the proof.
\end{proof}

From this result, it immediately follows that the exponential map is well defined almost everywhere.

\begin{corollary}\label{cor_domain_exp}
	The domain of the exponential map $\exp_p$ of $\Var{P}_\alpha^\vect{k}$ is the whole tangent space $\Tang_p \Var{P}_\alpha^\vect{k}$, except for the half-line $\set{ (\dot{\lambda},0,\ldots,0) \mid \dot{\lambda} \le -\lambda }$.
\end{corollary}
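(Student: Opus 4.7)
The plan is to unpack the explicit formula from \cref{prop_geodesics} case by case, according to whether $N = 0$ or $N > 0$, and to determine precisely when the resulting curve $\gamma(t)$ both is well-defined and stays in $\Var{P}_\alpha^\vect{k}$ on the full parameter interval $[0,1]$. Recall that $\dot{p}$ lies in the domain of $\exp_p$ if and only if the maximal starting geodesic with initial conditions $(p, \dot{p})$ exists and remains inside $\Var{P}_\alpha^\vect{k}$ at least up to time $1$.

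The case $N = 0$ is immediate. The geodesic is the straight line $t \mapsto (\lambda + \dot{\lambda} t, \vect{u}_1, \ldots, \vect{u}_d)$, which stays in $\Var{P}_\alpha^\vect{k}$ on $[0,1]$ iff $\lambda + \dot{\lambda} t > 0$ for every $t \in [0,1]$. Since $t \mapsto \lambda + \dot{\lambda} t$ is affine and positive at $t = 0$, this reduces to $\dot{\lambda} > -\lambda$. This is exactly the complement (within the $N = 0$ slice of $\Tang_p \Var{P}_\alpha^\vect{k}$) of the excluded half-line in the statement.

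In the case $N > 0$, I would check that nothing in the formula degenerates. The spherical factors are automatically unit vectors, and the apparent singularities at those $i$ for which $\dot{\vect{u}}_i = 0$ are removable, since $a_i$ vanishes linearly in $\norm{\dot{\vect{u}}_i}$, making $\frac{\dot{\vect{u}}_i}{\norm{\dot{\vect{u}}_i}} \sin(a_i)$ smoothly extend to $0$. The subtle point is the radial factor: I would show that $\mu(s(t)) = A/\cos(\alpha s(t)+B) > 0$ for all $t \in [0,1]$, where $A = \lambda\cos B > 0$ and $B = \arctan\bigl(\dot{\lambda}/(\alpha\lambda N)\bigr) \in (-\pi/2,\pi/2)$. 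From \cref{eqn_reparameterization}, $s(t)$ is strictly increasing in $t$ and, as $t \to \infty$, the argument of the outer $\arctan$ tends to $\dot{\lambda}/(\alpha\lambda N)$, so $\alpha s(t) + B$ increases from $B$ towards $\pi/2$ but never reaches it for finite $t$. Combined with the lower bound $\alpha s(t) + B \ge B > -\pi/2$, this confines $\alpha s(t) + B$ to $(-\pi/2, \pi/2)$, hence $\cos(\alpha s(t) + B) > 0$ throughout $[0,\infty)$, and the geodesic never leaves $\Var{P}_\alpha^\vect{k}$.

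Combining the two cases yields the claimed description of the domain. The main obstacle is the bookkeeping in the second case to certify that $\alpha s(t) + B$ stays strictly below $\pi/2$ for all finite $t$, but once monotonicity of $s(t)$ and the asymptotic bound are in hand, the remainder is routine.
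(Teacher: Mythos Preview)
Your argument is correct and aligns with the paper, which does not spell out a proof but simply remarks that the corollary follows immediately from \cref{prop_geodesics}. Your case analysis is exactly the unpacking one would expect.

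One minor simplification in the $N>0$ case: rather than tracking $\alpha s(t)+B$ via the reparameterization \cref{eqn_reparameterization}, you can observe directly from \cref{prop_geodesics} that the geodesic at time $t$ is $\exp_p(t\dot{p})$, whose radial component is
\[
\sqrt{(\lambda+t\dot{\lambda})^2 + (\lambda\alpha t N)^2},
\]
which is strictly positive for every $t\ge 0$ whenever $N>0$. This avoids the monotonicity and asymptotic bookkeeping you describe, though your route is perfectly valid and arguably more illuminating about why the geodesic never escapes.
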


Having computed the geodesics in the pre-Segre--Veronese manifold, an application of \cref{lem_riemannian_covering,lem_basic_covering_property} enables us to push them forward with $\otimes$ to the Segre--Veronese manifold.
Note that because Riemannian coverings are \emph{local} isometries, we are only guaranteed that geodesics are mapped to geodesics, but not that minimizing geodesics are mapped to minimizing geodesics, i.e., the converse of \cref{lem_basic_covering_property} is not true.
In this way, we arrive at the geodesics and the following characterization of the exponential map of $\Var{S}_\alpha^\vect{k}$.

\begin{corollary}[The exponential map of $\Var{S}_\alpha^\vect{k}$] \label{cor_exponential_map}
	Let $\tensor{P} \in \Var{S}_\alpha^\vect{k}$ be a rank-$1$ tensor and let $p=(\lambda,\vect{u}_1,\ldots,\vect{u}_d) \in \otimes^{-1}(\tensor{P})$ be any of its representatives in $\Var{P}_\alpha^\vect{k}$.
	Let $\dot{\tensor{P}} \in \Tang_\tensor{P} \Var{S}_\alpha^\vect{k}$, which can be expressed uniquely as
	\begin{align*}
 		\dot{\tensor{P}} = \dot{\lambda}\tensor{U} + \lambda\mleft( \nu_{k_1}(\dot{\vect{u}}_1)\otimes\vect{u}_2^{\otimes k_2}\cdots\otimes\vect{u}_d^{\otimes k_d} + \cdots + \vect{u}_1^{\otimes k_1}\otimes\cdots\otimes\vect{u}_{d-1}^{\otimes k_{d-1}}\otimes\nu_{k_d}(\dot{\vect{u}}_d) \mright),
	\end{align*}
	where
	\begin{align}
		\tensor{U} &=\vect{u}_1^{\otimes k_1} \otimes \cdots \otimes \vect{u}_d^{\otimes k_d},%
		\label{eqn_U}\\
		\nu_{k_i}(\dot{\vect{u}}_i) &= (\dot{\vect{u}}_i \otimes\vect{u}_i^{\otimes (k_i - 1)}) + \cdots + (\vect{u}_i^{\otimes (k_i - 1)} \otimes \dot{\vect{u}}_i) \quad\text{for all } \dot{\vect{u}}_i \in \Tang_{\vect{u}_i} \Sp^{n}.%
		\label{eqn_Nu}
	\end{align}
	Let $\dot{p}=(\dot{\lambda},\dot{\vect{u}}_1,\ldots,\dot{\vect{u}}_d) \in \Tang_p \Var{P}_\alpha^\vect{k}$ be the corresponding lifted tangent vector.
	Then,
	\begin{align*}
 		\exp_\tensor{P}(\dot{\tensor{P}}) = \otimes ( \exp_{p}(\dot{p}) ),
	\end{align*}
	insofar as $\dot{p}$ is in the domain of $\exp_p$.
\end{corollary}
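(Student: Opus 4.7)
The plan is to exploit \cref{lem_riemannian_covering}, which gives that $\otimes \colon \Var{P}_\alpha^\vect{k} \to \Var{S}_\alpha^\vect{k}$ is a normal Riemannian covering and in particular a local isometry. Geodesics are local objects, so local isometries map geodesics to geodesics and commute with the exponential map. The desired formula will then follow by pushing forward the explicit geodesic on $\Var{P}_\alpha^\vect{k}$ from \cref{prop_geodesics}.

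First, I would verify that $\dot{\tensor{P}}$ admits a unique decomposition of the form stated. Since $\otimes$ is a covering map, it is a local diffeomorphism, so $\deriv\otimes_{p} \colon \Tang_p \Var{P}_\alpha^\vect{k} \to \Tang_\tensor{P} \Var{S}_\alpha^\vect{k}$ is a linear isomorphism. Applying the product rule to the map $(\lambda, \vect{u}_1, \ldots, \vect{u}_d) \mapsto \lambda \vect{u}_1^{\otimes k_1} \otimes \cdots \otimes \vect{u}_d^{\otimes k_d}$ yields
\begin{align*}
\deriv\otimes_p(\dot{\lambda}, \dot{\vect{u}}_1, \ldots, \dot{\vect{u}}_d) = \dot{\lambda} \tensor{U} + \lambda \sum_{i=1}^d \vect{u}_1^{\otimes k_1}\otimes\cdots\otimes\nu_{k_i}(\dot{\vect{u}}_i)\otimes\cdots\otimes\vect{u}_d^{\otimes k_d},
\end{align*}
precisely the expression in the statement. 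Uniqueness of the decomposition and existence of the lift $\dot{p}$ both follow from $\deriv\otimes_p$ being an isomorphism.

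Next, I would invoke the local isometry property of $\otimes$. Let $\gamma(t) := \exp_p(t\dot{p})$ denote the starting geodesic on $\Var{P}_\alpha^\vect{k}$ given by \cref{prop_geodesics}, defined on a maximal interval containing $0$. Composing with $\otimes$ yields a smooth curve $\otimes \circ \gamma$ on $\Var{S}_\alpha^\vect{k}$ satisfying $(\otimes \circ \gamma)(0) = \tensor{P}$ and $(\otimes \circ \gamma)'(0) = \deriv\otimes_p(\dot{p}) = \dot{\tensor{P}}$. Since $\otimes$ is a local isometry, it sends geodesics to geodesics (this is a standard fact, an immediate consequence of the defining ODE being local and coordinate-independent), so $\otimes \circ \gamma$ is a geodesic on $\Var{S}_\alpha^\vect{k}$. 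By uniqueness of constant-speed geodesics with prescribed initial data, $\otimes \circ \gamma$ coincides with $t \mapsto \exp_\tensor{P}(t \dot{\tensor{P}})$ on their common domain. Evaluating at $t = 1$ produces the claimed identity $\exp_\tensor{P}(\dot{\tensor{P}}) = \otimes(\exp_p(\dot{p}))$, valid whenever the right-hand side is defined, i.e., when $\dot{p}$ lies in the domain characterized by \cref{cor_domain_exp}.

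The main subtlety, which I would address as a concluding remark, is that the formula is independent of the choice of preimage $p \in \otimes^{-1}(\tensor{P})$. Two preimages differ by an isometric deck transform $\imath_\sigma$ from \cref{lem_riemannian_covering}. Since $\imath_\sigma$ is an isometry and satisfies $\otimes \circ \imath_\sigma = \otimes$, it commutes with the exponential map in the sense that $\imath_\sigma(\exp_p(\dot{p})) = \exp_{\imath_\sigma(p)}(\deriv \imath_\sigma(\dot{p}))$. Applying $\otimes$ to both sides gives the same tensor in $\Var{S}_\alpha^\vect{k}$, confirming well-definedness. No heavy calculation is needed beyond \cref{prop_geodesics}; the only nontrivial conceptual step is the product-rule computation of $\deriv\otimes_p$, and this is forced by the definition of $\otimes$.
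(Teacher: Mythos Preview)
Your proposal is correct and follows essentially the same route as the paper: the corollary is stated there without a separate proof, only the preceding remark that an application of \cref{lem_riemannian_covering,lem_basic_covering_property} lets one push geodesics forward via $\otimes$, since Riemannian coverings are local isometries and hence send geodesics to geodesics. Your argument is more detailed---explicitly computing $\deriv\otimes_p$ by the product rule, invoking uniqueness of geodesics with given initial data, and checking independence of the choice of preimage via the deck transforms---but the underlying idea is identical.
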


Another interesting consequence of \cref{prop_geodesics,cor_domain_exp} is that it entails the next result about the radius of the largest open ball in $\Tang_\tensor{P} \Var{S}_\alpha^\vect{k}$ for which $\exp$ is a diffeomorphism, i.e., the local injectivity radius of the Segre--Veronese manifold.

\begin{corollary}[Injectivity radius]
	The local injectivity radius of $\Var{P}_\alpha^\vect{k}$, and thus also of $\Var{S}_\alpha^\vect{k}$, at $p = (\lambda,\vect{u}_1,\ldots,\vect{u}_d)$ is $\lambda$.
	The (global) injectivity radius of $\Var{P}_\alpha^\vect{k}$ and $\Var{S}_\alpha^\vect{k}$, i.e., the infimum of the local injectivity radii over the manifold, is zero.
\end{corollary}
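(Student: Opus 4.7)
The plan is to prove the equality as two inequalities: an \emph{upper bound} $\le\lambda$ that falls out of the domain of $\exp_p$, and a \emph{lower bound} $\ge\lambda$ obtained by verifying that $\exp_p$ is a diffeomorphism from the open ball $B_\lambda(0)\subset\Tang_p\Var{P}_\alpha^{\vect{k}}$ onto its image. The corresponding statement for $\Var{S}_\alpha^{\vect{k}}$ then follows from \cref{lem_riemannian_covering}, since a local Riemannian covering preserves local injectivity radii, and the claim on the global injectivity radius will be an immediate corollary of the local one.

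For the upper bound, I would invoke \cref{cor_domain_exp}: the domain of $\exp_p$ excludes the half-line $\{(\dot\lambda,0,\ldots,0):\dot\lambda\le-\lambda\}$. Along this half-line the spherical part of $g_p$ contributes nothing, so the $g_p$-norm collapses to $\abs{\dot\lambda}$ and the closest excluded point to the origin is $(-\lambda,0,\ldots,0)$, at norm exactly $\lambda$. Hence no open ball of radius strictly greater than $\lambda$ fits inside the domain, which rules out $\exp_p$ being a diffeomorphism on any such ball.

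For the lower bound, I would use the closed form in \cref{prop_geodesics} to reconstruct $\dot p=(\dot\lambda,\dot{\vect{u}}_1,\ldots,\dot{\vect{u}}_d)$ from its image $(\mu,\vect{v}_1,\ldots,\vect{v}_d)=\exp_p(\dot p)$. Setting $N=\sqrt{\sum_i k_i\norm{\dot{\vect{u}}_i}^2}$ and $s_i=\norm{\dot{\vect{u}}_i}/N$, each angle factors as $a_i=s_i\theta_0/\alpha$ with the common scalar $\theta_0:=\tfrac{\pi}{2}-\arctan\tfrac{\lambda+\dot\lambda}{\alpha\lambda N}$. Assuming $a_i\in(0,\pi)$, projecting $\vect{v}_i=\vect{u}_i\cos a_i+(\dot{\vect{u}}_i/\norm{\dot{\vect{u}}_i})\sin a_i$ onto $\vect{u}_i$ and onto $\vect{u}_i^\perp$ uniquely recovers both $a_i$ and the unit direction $\dot{\vect{u}}_i/\norm{\dot{\vect{u}}_i}$. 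Reading off $\theta_0=\alpha a_i/s_i$ and combining with $\mu^2=(\lambda+\dot\lambda)^2+(\alpha\lambda N)^2$ then pins down $\dot\lambda$ and $N$, and finally $\norm{\dot{\vect{u}}_i}=s_i N$. Nondegeneracy of $\deriv\exp_p$ on $B_\lambda(0)$ can be checked by differentiating the same formula.

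The main obstacle is the hypothesis $a_i\in(0,\pi)$ uniformly on $B_\lambda(0)$: parameterizing the constraint $(\dot\lambda/\lambda)^2+(\alpha N)^2=r^2<1$ by an angle $\beta\in(0,\pi)$ gives $\theta_0\le\beta/2<\pi/2$, and therefore $a_i<\pi/(2\alpha\sqrt{k_i})$, so guaranteeing the decomposition step above is where the delicate work must be done. The global claim is then easy: taking $p_n=(1/n,\vect{u}_1,\ldots,\vect{u}_d)$ exhibits a sequence whose local injectivity radii tend to zero, so the infimum over either $\Var{P}_\alpha^{\vect{k}}$ or $\Var{S}_\alpha^{\vect{k}}$ vanishes.
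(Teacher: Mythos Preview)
The paper offers no detailed proof of this corollary, presenting it as a direct consequence of \cref{prop_geodesics} and \cref{cor_domain_exp}; your upper bound and your global-infimum argument (via $p_n=(n^{-1},\vect{u}_1,\ldots,\vect{u}_d)$) match that reasoning exactly.

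The gap you flag in the lower bound---that your estimate $a_i<\pi/(2\alpha\sqrt{k_i})$ does not by itself force $a_i<\pi$---is genuine and cannot be closed, because the corollary as stated is false for small $\alpha$. Take $d=1$, $k_1=1$, $n_1=2$, so $\Var{P}_\alpha^{(1)}=\RR_{>0}\times_{\alpha\mathrm{Id}}\Sp^1$ is a flat cone of total angle $2\pi\alpha$. For $\alpha=\tfrac14$ and $p=(1,\vect{u})$, pick a unit $\vect{v}\perp\vect{u}$ and set $\dot p_\pm=(-\tfrac12,\,\pm 2\vect{v})$. Both lie in $B_\lambda(0)$ since $\norm{\dot p_\pm}_p^2=\tfrac14+(\alpha\lambda)^2\cdot 4=\tfrac12<1$, yet plugging into \cref{prop_geodesics} gives $N=2$, $a_1=\pi$, and hence $\exp_p(\dot p_+)=\exp_p(\dot p_-)=(1/\sqrt 2,\,-\vect{u})$. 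So $\exp_p$ is already non-injective on $B_\lambda(0)$. Geometrically, for every $\alpha<\tfrac12$ the antipodal ray on this cone is a cut locus first reached at distance $\lambda\sin(\pi\alpha)<\lambda$. Your instinct that the hypothesis $a_i\in(0,\pi)$ is where the delicate work must be done is exactly right: it is precisely where both your argument and the stated lower bound break down. (A smaller issue: normal Riemannian coverings need not preserve local injectivity radii---think $\RR\to\Sp^1$---so the transfer to $\Var{S}_\alpha^{\vect{k}}$ would also require more than a one-line appeal to \cref{lem_riemannian_covering}.)
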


Note that the latter claim is also immediate from the fact that neither $\Var{P}_\alpha^\vect{k}$ nor $\Var{S}_\alpha^\vect{k}$ are complete metric spaces.
Indeed, the sequence $p_n = (n^{-1}, \vect{u}_1, \ldots, \vect{u}_d) \in \Var{P}_\alpha^\vect{k}$ tends to $(0,\vect{u}_1,\ldots,\vect{u}_d) \not\in \Var{P}_\alpha^\vect{k}$ as $n\to\infty$, and the image of $p_n$ in $\Var{S}_{\alpha}^{\vect{k}}$ tends to $0 \notin \Var{S}_{\alpha}^{\vect{k}}$.

Finally, we highlight that the exponential map of the pre-Segre--Veronese manifold $\Var{S}_\alpha^\vect{k}$ can be computed numerically very efficiently by straightforward implementation of the formulas in \cref{prop_geodesics}.
The cost is about $\mathcal{O}(\sum_{i=1}^d n_i)$ operations to compute $N$ and all $\vect{u}_i(t)$'s, and a constant number of operations for $\lambda(t)$ and $g(t)$. 

\section{The logarithmic map}\label{sec_log}

We now compute the logarithmic map of the $\alpha$-warped (pre-)Segre--Veronese manifold, significantly extending prior results of one of the authors, \new{Lars Swijsen}, on the Segre manifold ($\vect{k}=(1,\ldots,1)$) with the Euclidean metric ($\alpha=1$) as part of his PhD thesis~\cite{Swijsen2022}.

In the next subsection, we first determine a practical characterization of \emph{when} two points in the pre-Segre--Veronese manifold can be connected with a geodesic.
Then, in \cref{sec_sub_preSVlog}, we compute this minimizing geodesic if it exists.
Finally, we compute minimizing geodesics in the Segre--Veronese manifold between two rank-$1$ tensors in \cref{sec_sub_SVlog} by appropriately lifting them to the pre-Segre--Veronese manifold.
Additionally, an efficient matchmaking algorithm is presented to compute this lift, essentially solving a particular combinatorial problem in linear time.

\subsection{Compatibility}

Our main motivation for considering $\alpha$-warped geometries of the Segre--Veronese manifold is the observation in \cite[Chapter 6]{Swijsen2022} that not all pairs of points in the Segre manifold are connected by a minimizing geodesic.
That is, the Segre manifold is not geodesically connected in the standard Euclidean geometry.

The main result of this subsection, \cref{prop_compatibility}, will show that the geodesic connectedness of $\Var{P}_\alpha^\vect{k}$ can be characterized by the following property.

\begin{definition}[$\alpha$-compatibility] \label{def_compatibility}
Consider two points
\(
p = (\lambda, \vect{u}_1, \ldots, \vect{u}_d)
\) and \(
q = (\mu, \vect{v}_1, \ldots, \vect{v}_d)
\) 
in $\Var{P}_\alpha^\vect{k}$. Let
\begin{align*}
	M(p,q) := \dist_{\Sp^{\vect{n},\vect{k}}}( (\vect{u}_1,\ldots,\vect{u}_d), (\vect{v}_1,\ldots,\vect{v}_d) ) = \sqrt{\sum_{i=1}^d k_i \cdot \sphericalangle^2( \vect{u}_i, \vect{v}_i )}.
\end{align*}
Then, $p$ and $q$ are called \emph{$\alpha$-compatible} if
\(
M(p,q) < \alpha^{-1} \pi.
\)
\end{definition}

The dependency of $M(p,q)$ on the parameters $\vect{k} \ge 1$ will not be emphasized in the notation, instead being implicit in the fact that $p$, $q\in\Var{P}_\alpha^\vect{k}$.

\begin{proposition}\label{prop_compatibility}
	Two points $p$ and $q$ can be connected by a minimizing geodesic in $\Var{P}_\alpha^\vect{k}$ if and only if $p$ and $q$ are $\alpha$-compatible.
\end{proposition}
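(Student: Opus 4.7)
My approach is to exploit the warped-product structure $\Var{P}_\alpha^\vect{k} = \RR_{>0} \times_{\alpha\mathrm{Id}} \Sp^{\vect{n},\vect{k}}$ and to reduce the question to a two-dimensional cone model.

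For the direction that the existence of a minimizing geodesic implies $\alpha$-compatibility, I would first note that a minimizing geodesic is in particular a geodesic, so by \cref{prop_geodesics} its spherical projection is a geodesic on $\Sp^{\vect{n},\vect{k}}$ and, in the unit spherical-speed parameterization used in the proof of \cref{prop_geodesics}, its radial coordinate takes the form $\rho(s) = A/\cos(\alpha s + B)$ for some $A > 0$ and $B \in (-\pi/2, \pi/2)$. For $\rho$ to remain positive and finite along the entire segment $s \in [0, S]$, the argument $\alpha s + B$ must stay within $(-\pi/2, \pi/2)$, which forces $\alpha S < \pi$. Since the spherical projection connects the spherical components of $p$ and $q$, its arc length satisfies $S \ge M(p, q)$, so that $\alpha M(p, q) < \pi$.

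For the converse, I would assume $\alpha M(p, q) < \pi$ and construct an explicit connecting geodesic $\gamma_\star$: take the minimizing spherical geodesic on $\Sp^{\vect{n},\vect{k}}$ (a product of great-circle arcs) of length $S = M(p, q)$ from $(\vect{u}_1, \ldots, \vect{u}_d)$ to $(\vect{v}_1, \ldots, \vect{v}_d)$, and combine it with the radial profile $\rho(s) = A/\cos(\alpha s + B)$, choosing $A$ and $B$ so that $\rho(0) = \lambda$ and $\rho(S) = \mu$ (with $\lambda$, $\mu$ the radial components of $p$, $q$ as in \cref{def_compatibility}). An intermediate value argument applied to $f(B) := \mu \cos(\alpha S + B) - \lambda \cos B$ on $(-\pi/2,\, \pi/2 - \alpha S)$ shows that suitable $A, B$ exist precisely because $\alpha S < \pi$, and the resulting $\gamma_\star$ is by construction a smooth curve in $\Var{P}_\alpha^\vect{k}$ that satisfies the geodesic equations derived in \cref{prop_geodesics}.

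The main obstacle is the last step: showing that $\gamma_\star$ is length-minimizing among \emph{all} piecewise smooth curves from $p$ to $q$. Here, I would reduce to the two-dimensional warped product $\RR_{>0} \times_{\alpha\mathrm{Id}} \RR$. Any piecewise smooth curve $\gamma(t) = (\lambda(t), \phi(t))$ in $\Var{P}_\alpha^\vect{k}$ has length
\begin{align*}
    \ell(\gamma) = \int_0^1 \sqrt{\lambda'(t)^2 + \alpha^2 \lambda(t)^2 \, \psi'(t)^2}\,\mathrm{d}t,
\end{align*}
where $\psi(t)$ denotes the cumulative spherical arc length of $\phi$; this equals the length of the induced planar curve $(\lambda(t), \psi(t))$ in the two-dimensional cone. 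Via the change of coordinates $\theta = \alpha \psi$, that cone identifies isometrically with a wedge of the punctured Euclidean plane in polar coordinates, so that the distance from $(\lambda, 0)$ to $(\mu, \alpha L)$ (with $L := \psi(1) \ge M(p, q)$) is bounded below by the law-of-cosines expression $\sqrt{\lambda^2 + \mu^2 - 2\lambda\mu\cos(\alpha L)}$ when $\alpha L < \pi$, and by $\lambda + \mu$ otherwise. Since this law-of-cosines expression is strictly increasing in $L$ on $[M(p,q), \pi/\alpha)$ and strictly smaller than $\lambda + \mu$ throughout that interval, the infimum over $L \ge M(p, q)$ is attained at $L = M(p, q)$ and equals $\ell(\gamma_\star)$, whence $\gamma_\star$ is minimizing.
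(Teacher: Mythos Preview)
Your argument is correct and shares the paper's overall strategy: in the unit-spherical-speed parameterization from the proof of \cref{prop_geodesics}, any geodesic has a spherical projection that is itself a (pre)geodesic and a radial profile $\rho(s)=A/\cos(\alpha s+B)$, and the $\alpha$-compatibility bound is read off from the requirement that $\rho$ stay positive and finite on $[0,S]$. For the forward implication the paper invokes Chen's lemma \cite[Lemma~4.4]{Chen1999} to conclude that the spherical projection of a \emph{minimizing} geodesic is itself minimizing, hence $S=M(p,q)$; you use only the elementary inequality $S\ge M(p,q)$, which already suffices.

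Where your proof genuinely goes beyond the paper is the converse direction. Having solved explicitly for $A$ and $B$ (you use an intermediate-value argument instead), the paper exhibits a connecting geodesic $\sigma$ whenever $p$ and $q$ are $\alpha$-compatible but does not argue that $\sigma$ is length-\emph{minimizing} among all piecewise smooth curves; it simply ends the proof there. Your two-dimensional cone reduction---replacing the spherical component of an arbitrary competitor by its cumulative arc length $\psi$, identifying $\RR_{>0}\times_{\alpha\mathrm{Id}}\RR$ with (the universal cover of) the punctured Euclidean plane via $\theta=\alpha\psi$, and minimizing the resulting law-of-cosines lower bound over $L\ge M(p,q)$---closes this gap cleanly and, as a by-product, delivers the distance formula that the paper derives separately only later in \cref{prop_distance}.
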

\begin{proof}
	Let $p = (\lambda, \vect{u}_1, \ldots, \vect{u}_d) \in \Var{P}_\alpha^\vect{k}$ and $q = (\mu, \vect{v}_1, \ldots, \vect{v}_d) \in \Var{P}_\alpha^\vect{k}$.

	The case where $\vect{u}_i = \vect{v}_i$ for all $i = 1, \dots, d$, so $M(p,q)=0$, corresponds to the case of straight line geodesics in \cref{prop_geodesics}.
	All such $p$ and $q$ can be connected by a straight line and since $M(p,q)=0$ they are always $\alpha$-compatible.
	Hence, the claim holds in this special case.

	The remainder of the proof treats the case $M(p,q) > 0$. For brevity, we let $M=M(p,q)$.
	\new{Recall from \cite[Lemma 4.4]{Chen1999} that the projection of a geodesic in a warped product onto the second factor decreases in length if the projected curve in the second factor is replaced by a minimizing geodesic. This implies that minimizing geodesics in a warped product project to minimizing geodesics.}

	\new{The geodesics passing through $p$ are described by \cref{lem_basic_geodesic_form}. In particular, a \emph{minimizing} geodesic $\sigma(s)=(\mu(s),\vect{v}_1(s),\dots,\vect{v}_d(s))$ that passes through both $p$ and $q$ must reach $q$ at $s = M$ because $\sigma$ is parameterized to unit spherical speed and the distance in $\Sp^{\vect{n},\vect{k}}$ between $(\vect{u}_1,\ldots,\vect{u}_d)$ and $(\vect{v}_1,\ldots,\vect{v}_d)$ is $M=M(p,q)$ from \cref{def_compatibility}.}
	\new{Plugging in this boundary condition $\sigma(M) = q$ allows us to determine the constant $B$ from \cref{lem_basic_geodesic_form}:}
	\begin{align*}
	\frac{\mu(0)}{\mu(M)}
 		= \frac{\lambda}{\mu}
 		= \frac{\cos(B + \alpha M)}{\cos(B)}
 		= \cos(\alpha M) - \sin(\alpha M) \tan(B),
	\end{align*}
	so that
	\begin{align}\label{eqn_formula_B}
 		B = \arctan\mleft( \cot(\alpha M) - \frac{\lambda \mu^{-1}}{\sin(\alpha M)} \mright),
	\end{align}
	provided that $\alpha M \ne \pi k$ for $k\in\mathbb{Z}$. \new{Recall from the statement of \cref{lem_basic_geodesic_form} that if no such real $B$ exists, then there is also no geodesic $\sigma$ with unit spherical speed and $\sigma(0)=p$ and $\sigma(q)=M$. We can exploit this to prove the equivalence between compatibility and the existence of minimizing geodesics.}
 	We prove the two implications next, \new{which will conclude the proof}.

	\paragraph{Geodesics $\Longrightarrow$ compatibility}
	Since $\sigma(s)$ exists by assumption, the solution $\mu(s) = \frac{\lambda \cos(B)}{\cos(\alpha s + B)} > 0$ for all $0 \le s \le M$.
	The inverse of the cosine has singularities at $B + \alpha s = \frac{\pi}{2} + k\pi$ for $k\in\mathbb{Z}$, so $\mu(s)$ cannot pass through them, and we must have 
	\(
 	-\frac{\pi}{2} + k\pi < B + \alpha s < \frac{\pi}{2} + k\pi
	\)
	for all $0 \le s \le M$. In particular, \new{plugging in the endpoint $s=0$ and combining with the bounds $-\frac{\pi}{2} < B < \frac{\pi}{2}$ from \cref{eqn_formula_B} yields $k=0$. So, $-\frac{\pi}{2} - B < \alpha s < \frac{\pi}{2} - B$ for all $0 \le s \le M$. Plugging in the other endpoint $s=M$ and exploiting $-\frac{\pi}{2} < B$, $M > 0$, and $\alpha > 0$ yields} $0 < \alpha M < \pi$. Hence, $p$ and $q$ are strictly $\alpha$-compatible.
	
	\paragraph{Compatibility $\Longrightarrow$ geodesics}
	As $p$ and $q$ are $\alpha$-compatible, $0 < \alpha M < \pi$, \new{so $B$ is given as in \cref{eqn_formula_B}.}
	Since $-\frac{\pi}{2} < B < \frac{\pi}{2}$, it follows from $0 < \alpha M < \pi$ that
	\begin{align*}
 	B + \alpha M = \arctan\mleft( \cot(\alpha M) - \frac{\lambda\mu^{-1}}{\sin(\alpha M)} \mright) + \alpha M
 	< \arctan\mleft( \cot(\alpha M) \mright) + \alpha M,
	\end{align*}
	because the arctangent is a monotonically increasing function and $\frac{\lambda\mu^{-1}}{\sin(\alpha M)} > 0$ when $\alpha M \in (0, \pi)$.
	Exploiting that $\cot(\alpha M) = \tan(\frac{\pi}{2} - \alpha M)$, we get $-\frac{\pi}{2} < B + \alpha M < \frac{\pi}{2}$ for all $0 < \alpha M < \pi$.
	Consequently, \new{$-\frac{\pi}{2} < B + \alpha s < \frac{\pi}{2}$ for all $0 \le s \le M$. Therefore,} $\mu(s)$ is smooth curve that is strictly positive for all $0 \le s \le M$. \new{The curves $\vect{v}_i(s)$ are great circles, and we can always choose a parameterization so that $\vect{v}_i(0)=\vect{u}_i$ and $\vect{v}_i(M)=\vect{v}_i$.}
	Hence, $\sigma \subset \Var{P}_\alpha^\vect{k}$ is a minimizing geodesic connecting $p$ and $q$.
\end{proof}

The foregoing result implies that by changing the $\alpha$-warped metric of $\RR_{>0} \times \Sp^{\vect{n},\vect{k}}$ we can modify whether two points $p$ and $q$ are connected by a minimizing geodesic.
Specifically, we have the following result.

\begin{proposition}\label{prop:pre-segre_geodesically_connected}
	The pre-Segre--Veronese manifold $\Var{P}_\alpha^\vect{k}$ is geodesically connected if and only if $0 < \alpha < \frac{1}{\sqrt{k_1+\cdots+k_d}}$.
\end{proposition}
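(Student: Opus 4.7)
The plan is to reduce geodesic connectedness to a sharp bound on the function $M$ from \cref{def_compatibility}, and then to evaluate that bound explicitly. By definition, $\Var{P}_\alpha^\vect{k}$ is geodesically connected iff every pair of points $p, q$ admits a minimizing geodesic. By \cref{prop_compatibility}, this is equivalent to every pair being $\alpha$-compatible, i.e., to
\begin{align*}
\sup_{p, q \in \Var{P}_\alpha^\vect{k}} M(p,q) \;<\; \frac{\pi}{\alpha}.
\end{align*}

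Next, I would compute this supremum. Since $M(p,q)$ depends only on the spherical factors and not on $\lambda, \mu \in \RR_{>0}$, and since the angular distance on each sphere $\Sp^{n_i-1}$ takes values in $[0, \pi]$ with $\pi$ attained at antipodal points (including the case $\Sp^0 = \set{\pm 1}$), one finds
\begin{align*}
\sup_{p, q} M(p, q) \;=\; \sqrt{\sum_{i=1}^d k_i\, \pi^2} \;=\; \pi\sqrt{k_1 + \cdots + k_d},
\end{align*}
and this value is in fact attained, e.g.\ by choosing $\vect{v}_i = -\vect{u}_i$ for every $i$ (with $\lambda, \mu$ arbitrary).

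Combining the two observations, geodesic connectedness of $\Var{P}_\alpha^\vect{k}$ is equivalent to $\pi \sqrt{k_1 + \cdots + k_d} < \pi/\alpha$, i.e., $\alpha < 1/\sqrt{k_1 + \cdots + k_d}$. For the reverse direction, any $\alpha \geq 1/\sqrt{k_1+\cdots+k_d}$ is excluded by exhibiting the above antipodal pair, which saturates the bound and therefore fails $\alpha$-compatibility; by \cref{prop_compatibility} no minimizing geodesic connects its two points.

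The argument is essentially a one-line reduction to \cref{prop_compatibility} followed by an elementary extremum computation. The only mildly subtle point is verifying that the supremum of $M$ is \emph{attained}, so that the strict inequality in \cref{def_compatibility} forces the threshold $\alpha = 1/\sqrt{k_1 + \cdots + k_d}$ to be excluded rather than included; this is what guarantees the strict upper bound in the statement.
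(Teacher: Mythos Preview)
Your proposal is correct and follows essentially the same approach as the paper: reduce to \cref{prop_compatibility}, observe that $M^2(p,q) \le \pi^2 \sum_i k_i$ with equality attained at the antipodal pair $p=(\lambda,\vect{u}_1,\dots,\vect{u}_d)$, $q=(\lambda,-\vect{u}_1,\dots,-\vect{u}_d)$, and conclude. The paper's proof is simply a terser version of exactly this argument.
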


\begin{proof}
	Let $p,q \in \Var{P}_\alpha^\vect{k}$. Then, $M^2(p,q) \le \sum_{i=1}^d k_i \pi^2$, with equality attained at $p=(\lambda,\vect{u}_1,\ldots,\vect{u}_d)$ and $q=(\lambda, -\vect{u}_1,\ldots,-\vect{u}_d)$.
	The result follows from \cref{prop_compatibility}.
\end{proof}

\subsection{Pre-Segre--Veronese manifolds} \label{sec_sub_preSVlog}

Having determined when two points in the pre-Segre--Veronese manifold $\Var{P}_\alpha^\vect{k}$ can be connected by a minimizing geodesic, we proceed by computing such a geodesic between two $\alpha$-compatible points.

\begin{theorem}[Logarithmic map of $\Var{P}_\alpha^\vect{k}$] \label{thm_logarithm_preSV}
	Let 
	\[
	p = (\lambda, \vect{u}_1, \ldots, \vect{u}_d) \in \Var{P}_\alpha^\vect{k} \quad\text{and}\quad 
	q = (\mu, \vect{v}_1, \ldots, \vect{v}_d) \in \Var{P}_\alpha^\vect{k}
	\] 
	be such that $\vect{u}_i \ne -\vect{v}_i$ for all $i=1,\dots,d$.
	Let $M = M(p, q)$ from \cref{def_compatibility}.
	If $p$ and $q$ are $\alpha$-compatible, then
	\begin{align*}
		\log_p(q) = \bigl( \mu \cos(\alpha M) - \lambda, \dot{\vect{u}}_1, \dots, \dot{\vect{u}}_d \bigr), 
	\end{align*}
	where \new{for $i=1,\dots,d$ we have}
	\begin{align*}
		\dot{\vect{u}}_i
		= \new{\frac{\mu \sin(\alpha M)}{\lambda \alpha M} \log^{\mathbb{S}}_{\vect{u}_i}(\vect{v}_i)}.
	\end{align*}
	In particular, if $M = 0$, then
	\(
		\log_{p}(q) = ( \mu - \lambda, 0, \dots, 0).
	\)
\end{theorem}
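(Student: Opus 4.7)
The plan is to verify the stated formula by applying the exponential map from \cref{prop_geodesics} to the candidate tangent vector $\dot{p} = (\dot{\lambda}, \dot{\vect{u}}_1, \ldots, \dot{\vect{u}}_d)$ and checking that the result equals $q$. Existence and uniqueness of a connecting geodesic are guaranteed by \cref{prop_compatibility} together with the hypothesis $\vect{u}_i \ne -\vect{v}_i$ (which guarantees a unique great-circle arc on each spherical factor), so this verification determines $\log_p(q)$ unambiguously.

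First I would dispose of the degenerate case $M = 0$. Here every $\sphericalangle(\vect{u}_i, \vect{v}_i)$ vanishes, hence $\vect{v}_i = \vect{u}_i$, and the candidate reduces to $(\mu - \lambda, 0, \ldots, 0)$. This is the initial velocity of the radial straight-line geodesic, which, per the $N = 0$ branch of \cref{prop_geodesics}, exponentiates to $q$. For $M > 0$, denote $\theta_i := \sphericalangle(\vect{u}_i, \vect{v}_i)$. I would first compute $\norm{\dot{\vect{u}}_i}$ by using that $\vect{v}_i - \innerproduct{\vect{u}_i}{\vect{v}_i}\vect{u}_i$ has Euclidean norm $\sin(\theta_i)$, yielding $\norm{\dot{\vect{u}}_i} = \mu \sin(\alpha M)\theta_i/(\lambda \alpha M)$. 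Summing with weights, and using the identity $\sum_i k_i \theta_i^2 = M^2$ from \cref{def_compatibility}, gives $N = \mu \sin(\alpha M)/(\lambda \alpha)$, so $\lambda \alpha N = \mu \sin(\alpha M)$. Substituting into the first component of $\exp_p(\dot{p})$ and using $\dot{\lambda} = \mu \cos(\alpha M) - \lambda$ produces $\sqrt{(\mu \cos(\alpha M))^2 + (\mu \sin(\alpha M))^2} = \mu$, matching $q$ in the radial coordinate.

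For the spherical components, the prefactor of the angle $a_i$ in \cref{prop_geodesics} collapses to $\theta_i/(\alpha M)$, while the arctangent argument $(\lambda + \dot{\lambda})/(\lambda \alpha N) = \mu \cos(\alpha M)/(\mu \sin(\alpha M)) = \cot(\alpha M)$. Because $\alpha$-compatibility guarantees $0 < \alpha M < \pi$, one has $\arctan(\cot(\alpha M)) = \pi/2 - \alpha M$, whence $a_i = \theta_i$. Combined with $\dot{\vect{u}}_i/\norm{\dot{\vect{u}}_i} = (\vect{v}_i - \cos(\theta_i)\vect{u}_i)/\sin(\theta_i)$, the $i$-th spherical factor of $\exp_p(\dot{p})$ simplifies to $\vect{u}_i \cos(\theta_i) + (\vect{v}_i - \cos(\theta_i)\vect{u}_i) = \vect{v}_i$, matching $q$ in each spherical coordinate. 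The only delicate step is handling the factors $\theta_i / \sin(\theta_i)$ in the formula when a particular $\theta_i = 0$: the hypothesis $\vect{u}_i \ne -\vect{v}_i$ ensures $\theta_i < \pi$, and in the limit $\theta_i \to 0$ both the offending ratio and $\vect{v}_i - \innerproduct{\vect{u}_i}{\vect{v}_i}\vect{u}_i$ behave consistently so that $\dot{\vect{u}}_i = 0$, exactly as needed. The main obstacle is therefore verifying the arctangent identity; the hypothesis of $\alpha$-compatibility is used exactly there.
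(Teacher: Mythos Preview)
Your proposal is correct and follows essentially the same route as the paper: both arguments reduce to checking that $\exp_p$ applied to the candidate tangent vector returns $q$, carrying out the same componentwise computations (evaluation of $N$, the radial Pythagorean identity, and the arctangent simplification under $0<\alpha M<\pi$). The only difference is presentational---the paper poses the ansatz $(C, D\log_{\vect{u}_1}\vect{v}_1,\dots,D\log_{\vect{u}_d}\vect{v}_d)$ and solves for $C,D$, whereas you plug in the stated closed form directly.
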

\begin{proof}
	The case $M=0$ follows immediately from \cref{prop_geodesics}. Hence, in the remainder of the proof we consider the case $M > 0$.

	\new{Based on the form of the exponential map in \cref{prop_geodesics},} we conjecture that
	\begin{align*}
		\log_{p}(q) =
		(\dot{\lambda}, \dot{\vect{u}}_1, \dots, \dot{\vect{u}}_d) = (C, D \log^\mathbb{S}_{\vect{u}_1}(\vect{v}_1), \dots, D \log^\mathbb{S}_{\vect{u}_d}(\vect{v}_d))
	\end{align*}
	for some $C \in \mathbb{R}$ and $D > 0$.
	Now we just have to solve $q ={} \exp_p(\log_p q)$ for $C$ and $D$.

	Recall the expression for $\exp_p$ from \cref{prop_geodesics}.
	The first component yields
	\begin{align}\label{eq:from_log_ansatz_1}
		(\lambda + C)^{2} + (\lambda \alpha D M)^{2} = \mu^{2},
	\end{align}
	having \new{used \cref{def_compatibility} and} $\|\dot{\vect{u}}_i\| = D \|\log^\mathbb{S}_{\vect{u}_i} (\vect{v}_i)\| = D \sphericalangle(\vect{u}_i,\vect{v}_i)$. \new{Note that $N$ from \cref{prop_geodesics} equals $D M$.}
	Since $M>0$, there exists at least one $1\le i\le d$ so that $\vect{u}_i \ne \vect{v}_i$.
	In all of these factors, we get \new{from \cref{prop_geodesics} that
	\[
	\vect{v}_i = \exp^\mathbb{S}_{\vect{u}_i}( a \dot{\vect{u}}_i ) = \exp^\mathbb{S}_{\vect{u}_i}( aD \log^\mathbb{S}_{\vect{u}_i}(\vect{v}_i) ),
	\]
	so that, applying $\log_{\vect{u}_i}$ on both sides, we obtain $aD = 1$.}
	Then, from the expression of $a$ in \cref{prop_geodesics}, we find
	\begin{align*}
		\frac{1}{D} = a = \frac{1}{\alpha D M} \left( \frac{\pi}{2} - \arctan \left( \frac{\lambda + C}{\alpha \lambda D M} \right)  \right).
	\end{align*}
	\new{Clearing $D^{-1}$, multiplying by $\alpha M$ and taking cotangents,} we find
	\begin{align}
		 \mathrm{cotan}(\alpha M) ={}& \frac{\lambda + C}{\alpha \lambda D M}.
		\label{eq:from_log_ansatz_2}
	\end{align}

	The square system of polynomial equations \cref{eq:from_log_ansatz_1,eq:from_log_ansatz_2} in $C$ and $D$ together with the constraint $D>0$ defines the intersection of a circle with a line in the half-plane $D>0$. Ignoring the $D>0$ constraint, the two intersection points are
	\begin{align*}
		C ={} \varsigma\mu \cos(\alpha M) - \lambda \quad\text{and}\quad
		D ={} \varsigma\frac{\mu \sin(\alpha M)}{\lambda \alpha M}, \quad\text{for } \varsigma \in \{-1,1\}.
	\end{align*}
	By $\alpha$-compatibility we have $0 < \alpha M < \pi$, and since we posited $D >0$, $\varsigma$ has to be $1$.

	With the above values for $C$ and $D$, one verifies \new{by direct computation} that $\exp_p \circ \log_p$ is \new{the identity, concluding the proof.}
\end{proof}

\begin{remark}
We caution that if $\vect{u}_i = -\vect{v}_i$ for some $i$, then the logarithmic map is not well defined.
However, $p$ and $q$ can still be connected by a geodesic.
It is just that this geodesic is not unique.
Choosing $\dot{\vect{u}}_i$ to be any vector orthogonal to $\vect{u}_i$ and with norm $\pi$ will produce such a geodesic.
\end{remark}

An immediate consequence of the previous theorem is the following expression for the distance between points in the pre-Segre--Veronese manifold.

\begin{corollary}[Distance between compatible points in $\Var{P}_\alpha^\vect{k}$] \label{prop_distance}
	Consider two points
	$p = (\lambda, \vect{u}_1, \ldots, \vect{u}_d) \in \Var{P}_\alpha^\vect{k}$ and $q = (\mu, \vect{v}_1, \ldots, \vect{v}_d) \in \Var{P}_\alpha^\vect{k}$ that are $\alpha$-compatible.
	Let $M := M(p,q)$ from \cref{def_compatibility}.
	Then,
	\begin{align}\label{eq:distance_formula}
 		\dist_{\Var{P}_\alpha^\vect{k}}(p,q)
 		= \new{\| \log_p(q) \|_{p}}
 		= \sqrt{\lambda^2 - 2 \lambda \mu \cos(\alpha M) + \mu^2}.
	\end{align}
\end{corollary}
\begin{proof}
\new{By exploiting \cref{eqn_warped_metric_preSV,thm_logarithm_preSV,def_compatibility}, this becomes a straightforward computation:
\begin{align*}
\norm{ \log_p(q) }_{p}^2
&= (\mu \cos(\alpha M) - \lambda )^2 + \alpha^2 \lambda^2 \sum_{i=1}^d k_i \frac{\mu^2 \sin^2(\alpha M)}{\lambda^2 \alpha^2 M^2} \norm{\log_{\vect{u}_i}(\vect{v}_i)}^2\\
&= \mu^2 \cos^2(\alpha M) - 2 \lambda \mu \cos(\alpha M) + \lambda^2 + \frac{\mu^2 \sin^2(\alpha M)}{M^2} \sum_{i=1}^d k_i \sphericalangle^2(\vect{u}_i,\vect{v}_i)\\
&= \mu^2 (\cos^2(\alpha M) + \sin^2(\alpha M)) - 2 \lambda \mu \cos(\alpha M) + \lambda^2,
\end{align*}
which concludes the proof.}
\end{proof}

A formula, equivalent to \cref{eq:distance_formula}, is
\begin{align}
 	\dist_{\Var{P}_\alpha^\vect{k}}(p, q) =
	\sqrt{(\lambda - \mu)^{2} + 4 \lambda \mu \sin(\alpha M / 2)},
\end{align}
which is more numerically stable for points that are close together.

The distance between $p$ and $q$ in $\Var{P}_\alpha^\vect{k}$ is thus given by the \emph{law of cosines} applied to an imaginary triangle where one side has length $\lambda$, the other side has length $\mu$, and the angle is the spherical distance $M(p,q)$ multiplied by the warping factor $\alpha$. We thus immediately obtain the following result.

\begin{corollary}\label{cor_monotonic_distance}
The distance $\dist_{\Var{P}_\alpha^\vect{k}}(p, q)$ is monotonically increasing as a function of the spherical distance $M(p,q)$ for $0 < \alpha M(p,q) < \pi$.
\end{corollary}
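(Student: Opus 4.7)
The plan is to read the monotonicity directly off of the closed-form distance formula \cref{eq:distance_formula} provided by \cref{prop_distance}. Since $\lambda$, $\mu$, and $\alpha$ are all fixed positive constants (they do not depend on $M$), the only $M$-dependence in
\begin{align*}
	\dist_{\Var{P}_\alpha^\vect{k}}(p,q)^{2} = \lambda^{2} - 2\lambda\mu\cos(\alpha M) + \mu^{2}
\end{align*}
enters through the term $-2\lambda\mu\cos(\alpha M)$.

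First I would observe that $\cos$ is strictly decreasing on the interval $(0,\pi)$, so under the hypothesis $0 < \alpha M(p,q) < \pi$ the map $M \mapsto \cos(\alpha M)$ is strictly decreasing. Multiplying by the strictly negative constant $-2\lambda\mu$ (recall that points of $\Var{P}_\alpha^\vect{k}$ have strictly positive radial component, so $\lambda,\mu > 0$) flips this to a strictly increasing map, and adding the constant $\lambda^{2}+\mu^{2}$ preserves strict monotonicity. Since $\sqrt{\cdot}$ is strictly increasing on $[0,\infty)$, composing yields that $\dist_{\Var{P}_\alpha^\vect{k}}(p,q)$ is itself a strictly increasing function of $M(p,q)$ on the stated range.

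Alternatively, and perhaps more transparently, I would differentiate: a direct computation gives
\begin{align*}
	\frac{\partial}{\partial M}\dist_{\Var{P}_\alpha^\vect{k}}(p,q) = \frac{\alpha\lambda\mu\sin(\alpha M)}{\sqrt{\lambda^{2} - 2\lambda\mu\cos(\alpha M) + \mu^{2}}},
\end{align*}
which is strictly positive on $0 < \alpha M < \pi$ since $\sin(\alpha M) > 0$ there, $\alpha,\lambda,\mu > 0$, and the denominator is positive (it is zero only when $\lambda=\mu$ and $\cos(\alpha M)=1$, i.e., $M = 0$, which is excluded). There is no real obstacle here: both approaches are essentially one-line computations, and the only subtlety is the mild one of ensuring the denominator in the derivative does not vanish on the open interval, which is immediate from $0 < \alpha M < \pi$.
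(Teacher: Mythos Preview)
Your proposal is correct and follows essentially the same approach as the paper: the paper simply remarks that the distance formula \cref{eq:distance_formula} is the law of cosines with angle $\alpha M$, and states the corollary as immediate, which is exactly your first argument (monotonicity of $-\cos$ on $(0,\pi)$). Your alternative derivative computation is a fine elaboration but not needed.
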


Interestingly, we can also determine the distance between \emph{incompatible} points, even though \emph{minimizing} geodesics do not exist between them.
While a minimizing geodesic does not exist, there exists a limiting piecewise smooth curve that is the infimizer of the length functional in \cref{eqn_length_functional}.
In \cref{fig_alphawarping_right}, for example, this limiting piecewise smooth curve is the straight line from $(1,0)$ to $(0,0)$, composed with the straight line from $(0,0)$ to $(0,1)$.

\begin{proposition}[Distance between incompatible points in $\Var{P}_\alpha^\vect{k}$]\label{prop_limiting_curve}
	Two $\alpha$-in\-com\-pa\-ti\-ble points $p = (\lambda, \vect{u}_1, \ldots, \vect{u}_d) \in \Var{P}_\alpha^\vect{k}$ and $q = (\mu, \vect{v}_1, \ldots, \vect{v}_d) \in \Var{P}_\alpha^\vect{k}$ are at
	\begin{align*}
		\dist_{\Var{P}_\alpha^\vect{k}}(p,q) = \lambda + \mu.
	\end{align*}
\end{proposition}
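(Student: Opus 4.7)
The plan is to prove matching upper and lower bounds: $\dist_{\Var{P}_\alpha^\vect{k}}(p,q) \le \lambda + \mu$ and $\dist_{\Var{P}_\alpha^\vect{k}}(p,q) \ge \lambda + \mu$.

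For the upper bound, I would construct, for each $\epsilon \in (0, \min(\lambda, \mu))$, a three-piece piecewise smooth curve $\gamma_\epsilon$ from $p$ to $q$: first, shrink radially from $(\lambda, \vect{u}_1, \ldots, \vect{u}_d)$ to $(\epsilon, \vect{u}_1, \ldots, \vect{u}_d)$ keeping the spherical coordinates fixed, which is a straight-line geodesic of length $\lambda - \epsilon$ by the $N = 0$ case of \cref{prop_geodesics}; then traverse, at the fixed radius $\epsilon$, a minimizing geodesic on the product of spheres $\Sp^{\vect{n},\vect{k}}$ between $(\vect{u}_1, \ldots, \vect{u}_d)$ and $(\vect{v}_1, \ldots, \vect{v}_d)$, whose length in the warped metric is $\alpha \epsilon \, M(p,q)$ because of the factor $(\alpha\epsilon)^2$ in the warped product; finally, grow radially from $(\epsilon, \vect{v}_1, \ldots, \vect{v}_d)$ to $q$ for length $\mu - \epsilon$. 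The total is $\lambda + \mu - 2\epsilon + \alpha \epsilon \, M(p,q) \to \lambda + \mu$ as $\epsilon \to 0^+$.

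For the lower bound, the key observation is that the two-dimensional warped metric $\dd\lambda^2 + (\alpha\lambda)^2 \dd M^2$ on $\RR_{>0} \times \RR$ is precisely the Euclidean metric in polar coordinates with $\alpha M$ playing the role of the angle. For any piecewise smooth curve $\gamma(t) = (\lambda(t), \vect{u}_1(t), \ldots, \vect{u}_d(t))$ from $p$ to $q$, I would set $M(t) := \int_0^t \sqrt{\sum_i k_i \norm{\vect{u}_i'(s)}^2}\,\dd s$, so $M$ is non-decreasing with $M(0) = 0$ and $M(1) \ge M(p,q) \ge \pi/\alpha$ (the last using $\alpha$-incompatibility and that the spherical arclength dominates the spherical geodesic distance on $\Sp^{\vect{n},\vect{k}}$). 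A direct computation of $\dd(\lambda\cos(\alpha M))^2 + \dd(\lambda\sin(\alpha M))^2$ shows $\ell(\gamma)$ equals the Euclidean length of the planar curve $\tilde\gamma(t) := (\lambda(t)\cos(\alpha M(t)), \lambda(t)\sin(\alpha M(t))) \in \RR^2\setminus\{(0,0)\}$. Since the polar angle $\alpha M(t)$ moves continuously and monotonically from $0$ to at least $\pi$, there exists $t^* \in [0,1]$ with $\alpha M(t^*) = \pi$, giving $\tilde\gamma(t^*) = (-c, 0)$ for $c := \lambda(t^*) > 0$. Applying the Euclidean triangle inequality at $t^*$ and using $\cos(\alpha M(1)) \ge -1$,
\begin{align*}
	\ell(\gamma) \ge \norm{\tilde\gamma(0) - \tilde\gamma(t^*)} + \norm{\tilde\gamma(t^*) - \tilde\gamma(1)} \ge (\lambda + c) + \abs{\mu - c} \ge \lambda + \mu.
\end{align*}

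The main obstacle is setting up the planar isometry correctly and using monotonicity of the accumulated spherical arclength $M(t)$ to force $\tilde\gamma$ to cross the negative real axis. Once that geometric picture is in place, the rest---the length equality, the triangle inequality at the breakpoint $(-c,0)$, and the elementary minimization $\min_{c > 0} (c + \abs{\mu - c}) = \mu$---is routine.
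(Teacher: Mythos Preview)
Your proof is correct. The upper bound via the three-piece curve is exactly what the paper does.

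For the lower bound, your approach is genuinely different from the paper's and arguably cleaner. The paper argues by contradiction that any length-minimizing sequence of unit-speed curves must have points with radial coordinate approaching $0$ (otherwise the sequence would live in a compact subset of $\Var{P}_\alpha^\vect{k}$, have a uniformly convergent subsequence, and the limit would be a connecting geodesic, contradicting incompatibility); it then splits each curve at a point of radius $\epsilon$ and bounds each piece below by its radial variation, giving $\ell(\gamma_n)\ge(\lambda-\epsilon)+(\mu-\epsilon)$. Your argument instead unrolls every curve isometrically into the Euclidean plane via $\tilde\gamma(t)=\lambda(t)\bigl(\cos(\alpha M(t)),\sin(\alpha M(t))\bigr)$, uses monotonicity of the accumulated spherical arclength $M(t)$ together with $\alpha M(1)\ge\alpha M(p,q)\ge\pi$ to force $\tilde\gamma$ to hit the negative real axis, and finishes with the triangle inequality. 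This avoids the compactness machinery entirely and yields $\ell(\gamma)\ge\lambda+\mu$ directly for \emph{every} piecewise smooth curve, not just a minimizing sequence. The paper's route, on the other hand, makes the geometric picture---that the infimizing path is the degenerate one through the origin---more transparent.
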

\begin{proof}
	\new{We prove the distance is lower and upper bounded by $\lambda +\mu$.}

	\paragraph{Upper bound} \new{The distance between $p$ and $q$,
	\(
	\dist_{\Var{P}_\alpha^\vect{k}}(p,q) := \inf_{\gamma} \ell(\gamma),
	\)
	where $\gamma$ ranges over all piecewise smooth curves in $\Var{P}_\alpha^\vect{k}$ with $\gamma(0)=p$ and $\gamma(1)=q$, can be upper bounded as follows. Let $\epsilon > 0$ be arbitrary and}
	consider the curve that goes first in a straight line from $p = (\lambda, \vect{u}_1, \dots, \vect{u}_d)$ to $(\epsilon, \vect{u}_1, \dots, \vect{u}_d)$, then continues on a circular arc to $(\epsilon, \vect{v}_1, \dots, \vect{v}_d)$, and finally continues in a straight line to $q = (\mu, \vect{v}_1, \dots, \vect{v}_d)$.
	It has length $\lambda + \mu + (\alpha M - 2) \epsilon$, \new{for all $\epsilon > 0$. Hence, $\dist_{\Var{P}_\alpha^\vect{k}}(p,q) \le \lambda + \mu$, establishing the upper bound.}%

	\paragraph{Lower bound} \new{The set
	\[
	 \Var{P}' := \{ q' \in \Var{P}_{\alpha}^\vect{k} \mid \dist_{\Var{P}_\alpha^\vect{k}}(p,q') < \lambda + \mu + 1 \}
	\]
	is an open subset of $\Var{P}_\alpha^\vect{k}$.} Let $\gamma_n$ be a sequence of unit speed curves in $\Var{P}_\alpha^\vect{k}$ that connect two incompatible points $p$ and $q$, and whose lengths approach the infimum. \new{For sufficiently large $n$, we have $\gamma_n \subset \Var{P}'$. Otherwise, $\gamma_n$ contains a point of $\Var{P}_\alpha^\vect{k} \setminus \Var{P}'$, which lies at distance at least $\lambda + \mu + 1$ from $p$, so $\ell(\gamma_n) \ge \lambda + \mu + 1 \ge \dist_{\Var{P}_\alpha^\vect{k}}(p,q) +1$.}

	\new{The limit $\gamma$ of the sequence of $\gamma_n$'s cannot be contained in $\Var{P}'$, however, as this would show there is a piecewise smooth curve in $\Var{P}_\alpha^\vect{k}$ realizing the distance between $p$ and $q$, i.e., $p$ and $q$ would be $\alpha$-compatible by \cref{prop_compatibility}, contradicting their assumed incompatibility. As $\gamma_n \subset \Var{P}'$, the limit $\gamma$ must lie in the (metric) closure
	\[
	 \Var{C} = \Var{P'} \cup \{0\} \cup \{ q' \in \Var{P}_{\alpha}^\vect{k} \mid \dist_{\Var{P}_\alpha^\vect{k}}(p,q') = \lambda+\mu+1 \}
	\]
	of $\Var{P}'$; $0$ is in the closure since its distance to $p$ is at most $\lambda$. Since $\ell(\gamma) = \dist_{\Var{P}_\alpha^\vect{k}}(p,q)$, $\gamma \subset \Var{P}' \cup \{0\}$, so $\gamma$ must contain $0$. As $\gamma_n \to \gamma$, this entails that for a fixed $n$, there is a point} $a = (\epsilon_n, \vect{a}_1, \dots, \vect{a}_d) \in \Var{P}'$ in $\gamma_n$ \new{with $\epsilon_n$ minimal.} Let $\gamma^{(1)}$ denote the part of $\gamma_n$ connecting $p$ and $a$ and let $\gamma^{(2)}$ be the part of $\gamma_n$ connecting $a$ and $q$.
	Then,
	\begin{align*}
		\ell(\gamma_n) = \ell(\gamma^{(1)}) + \ell(\gamma^{(2)}).
	\end{align*}
	Moreover, if $\gamma^{(1)}(s) = (\nu(s), \vect{x}_1(s), \dots, \vect{x}_d(s))$, then
	\begin{align*}
		\ell(\gamma^{(1)}) = \int \sqrt{\nu'(s)^2 + \alpha^2 \nu(s)^2} \dd{s}
			\geq \int \sqrt{\nu'(s)^2} \dd{s}
			= \lambda - \epsilon_n.
	\end{align*}
	A similar argument shows that $\ell(\gamma^{(2)}) \geq \mu - \epsilon_n$.
	Specifically, for every $n$, there exists an $\epsilon_n$ such that $\ell(\gamma_n) \geq \lambda + \mu - 2 \epsilon_n$.
	\new{Letting $n\to\infty$, we have $\epsilon_n \to 0$ because $\gamma$ contains $0$, which shows that $\dist_{\Var{P}_\alpha^\vect{k}}(p,q) \ge \lambda + \mu$.}
\end{proof}

\subsection{Segre--Veronese manifolds} \label{sec_sub_SVlog}

By leveraging \cref{lem_basic_covering_property,thm_logarithm_preSV}, minimizing geodesics in the Segre--Veronese manifold $\Var{S}_\alpha^\vect{k}$ that connect two rank-$1$ tensors are readily obtained.
Indeed, a minimizing geodesic connecting $\tensor{P}$, $\tensor{Q} \in \Var{S}_\alpha^\vect{k}$ is the pushforward of any geodesic whose length coincides with
\begin{align*}
	\min_{\substack{p\in\otimes^{-1}(\tensor{P}),\, q\in\otimes^{-1}(\tensor{Q})}} \dist_{\Var{P}_\alpha^\vect{k}}(p, q).
\end{align*}
\Cref{cor_monotonic_distance} then leads us naturally to the following terminology.

\begin{definition}[Matched representatives] \label{def_matched_representatives}
	Let $\tensor{P}$, $\tensor{Q} \in \Var{S}_\alpha^\vect{k}$.
	We say that the representatives $p^\star \in \otimes^{-1}(\tensor{P})$ and $q^\star \in \otimes^{-1}(\tensor{Q})$ are \emph{matched} if
	\begin{align*}
		(p^\star, q^\star) \in \underset{\substack{p \in \otimes^{-1}(\tensor{P}),\, q \in \otimes^{-1}(\tensor{Q})}}{\arg\min} M(p,q),
	\end{align*}
	where $M$ is the distance from \cref{def_compatibility}.
\end{definition}

We are now ready to state and prove the main result about minimizing geodesics connecting two rank-$1$ tensors in the $\alpha$-warped Segre--Veronese manifold.

\begin{theorem}[The logarithmic map of $\Var{S}_\alpha^\vect{k}$]\label{thm_logarithm_SV}
	Let $\tensor{P}$, $\tensor{Q} \in \Var{S}_\alpha^\vect{k}$ be rank-$1$ tensors.
	If $p = (\lambda, \vect{u}_1, \ldots, \vect{u}_d) \in \otimes^{-1}(\tensor{P})$ and $q = (\mu, \vect{v}_1, \ldots, \vect{v}_d) \in \otimes^{-1}(\tensor{Q})$ are matched, $\alpha$-compatible representatives, then
	\begin{align*}
		\log_\tensor{P} \tensor{Q} = (\deriv_{p} \otimes)(\log_p q),
	\end{align*}
	\new{where $\deriv_p\otimes: \Tang_{p} {\Var{P}_\alpha^\vect{k}} \to \Tang_{\tensor{P}} {\Var{S}_\alpha^\vect{k}}$ is the differential of $\otimes$.}
\end{theorem}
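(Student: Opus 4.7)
The plan is to lift the problem from $\Var{S}_\alpha^\vect{k}$ to $\Var{P}_\alpha^\vect{k}$ via the covering map $\otimes$, invoke the logarithmic map on $\Var{P}_\alpha^\vect{k}$ from \cref{thm_logarithm_preSV}, and push the resulting geodesic forward. First, let $\gamma\colon[0,1]\to\Var{P}_\alpha^\vect{k}$ be the starting geodesic produced by \cref{prop_geodesics} with initial conditions $\gamma(0)=p$ and $\gamma'(0)=\log_p q$; by construction of the logarithm in \cref{thm_logarithm_preSV} this geodesic reaches $\gamma(1)=q$. Composing with the covering map, set $\widetilde{\gamma}:=\otimes\circ\gamma$. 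By \cref{lem_riemannian_covering} the map $\otimes$ is a normal Riemannian covering and therefore a local isometry, so $\widetilde{\gamma}$ is a geodesic in $\Var{S}_\alpha^\vect{k}$ joining $\tensor{P}$ and $\tensor{Q}$, and the chain rule yields $\widetilde{\gamma}'(0)=(\deriv_p\otimes)(\log_p q)$.

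The remaining task is to show that $\widetilde{\gamma}$ is a \emph{minimizing} geodesic; once this is established, the theorem is immediate from the definition of the logarithmic map as the initial velocity of the minimizing connecting geodesic. To this end, let $\delta\colon[0,1]\to\Var{S}_\alpha^\vect{k}$ be any piecewise smooth curve from $\tensor{P}$ to $\tensor{Q}$. By \cref{lem_basic_covering_property} it lifts to a curve $\widetilde{\delta}$ in $\Var{P}_\alpha^\vect{k}$ starting at $p$, ending at some $q'\in\otimes^{-1}(\tensor{Q})$, and having the same length. Thus $\ell(\delta)=\ell(\widetilde{\delta})\ge\dist_{\Var{P}_\alpha^\vect{k}}(p,q')$, so it suffices to verify that $\dist_{\Var{P}_\alpha^\vect{k}}(p,q')\ge\dist_{\Var{P}_\alpha^\vect{k}}(p,q)=\ell(\widetilde{\gamma})$ for every representative $q'$ of $\tensor{Q}$.

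To bound the lifted distance, note that the deck transforms described in \cref{lem_riemannian_covering} fix the radial coordinate, so every $q'\in\otimes^{-1}(\tensor{Q})$ shares the same $\mu$ as $q$, and the matched hypothesis of \cref{def_matched_representatives} gives $M(p,q)\le M(p,q')$. Two cases arise. If $p$ and $q'$ are $\alpha$-compatible, then \cref{prop_distance} combined with the monotonicity in \cref{cor_monotonic_distance} yields directly $\dist_{\Var{P}_\alpha^\vect{k}}(p,q')\ge\dist_{\Var{P}_\alpha^\vect{k}}(p,q)$. If instead $p$ and $q'$ are $\alpha$-incompatible, the distance formula for incompatible pairs shown just before this subsection gives $\dist_{\Var{P}_\alpha^\vect{k}}(p,q')=\lambda+\mu$, which strictly majorizes $\sqrt{\lambda^2-2\lambda\mu\cos(\alpha M(p,q))+\mu^2}$ because $\cos(\alpha M(p,q))>-1$ whenever $p$ and $q$ are $\alpha$-compatible. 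Either way, $\widetilde{\gamma}$ attains the infimal length among all curves from $\tensor{P}$ to $\tensor{Q}$, and the identification $\log_\tensor{P}\tensor{Q}=(\deriv_p\otimes)(\log_p q)$ follows.

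The main obstacle I anticipate is precisely the second case of this comparison: a competing curve in $\Var{S}_\alpha^\vect{k}$ might superficially appear to shorten the path by steering near the puncture at the origin and lifting to an $\alpha$-incompatible preimage pair, so one must leverage the strict law-of-cosines inequality to rule this out. A secondary subtlety is ensuring that the lift $\widetilde{\delta}$ indeed lands in $\otimes^{-1}(\tensor{Q})$; this is guaranteed by \cref{lem_basic_covering_property} since $\delta(1)=\tensor{Q}$. Once the dichotomy is resolved, the statement follows by a clean transfer of the pre-Segre--Veronese logarithm through the covering map.
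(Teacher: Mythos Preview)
Your proof is correct and follows essentially the same strategy as the paper: push the pre-Segre--Veronese geodesic forward through the covering map and verify minimality by lifting competing curves and invoking the monotonicity of the distance in $M$ (\cref{cor_monotonic_distance}). Your version is in fact more complete than the paper's brief argument, since you explicitly treat the dichotomy where the lifted competitor ends at an $\alpha$-incompatible preimage and close that case with the law-of-cosines inequality against $\lambda+\mu$; the paper's proof cites only \cref{cor_monotonic_distance} and leaves that case implicit.
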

\begin{proof}
	Since $p$ and $q$ are compatible, there is by \cref{prop_compatibility} a minimizing geodesic between them.
	To pull this back to a minimizing geodesic in $\Var{S}_\alpha^\vect{k}$, we need to know that inequalities between the lengths are not reversed.
	That is, if $\gamma$ and $\tilde{\gamma}$ are geodesics in $\Var{P}_\alpha^\vect{k}$ such that $(\otimes \circ \gamma)(0) = (\otimes \circ \tilde{\gamma})(0) = \tensor{P}$, $(\otimes \circ \gamma)(1) = (\otimes\circ \tilde{\gamma})(1) = \tensor{Q}$, and $\ell(\gamma) \leq \ell(\tilde{\gamma})$, then we need to show that $\ell(\otimes \circ \gamma) \leq \ell(\otimes \circ \tilde{\gamma})$.
	This implication indeed holds as a direct consequence of \cref{cor_monotonic_distance,lem_riemannian_covering,lem_basic_covering_property}.
\end{proof}

The following two results follow similarly.

\begin{proposition}\label{prop:existence_of_minimizing_geodesics}
	Two rank-1 tensors $\tensor{P}$ and $\tensor{Q} \in \Var{S}_{\alpha}^{\vect{k}}$ can be connected by a minimizing geodesic in $\Var{S}_{\alpha}^{\vect{k}}$ if and only if $\tensor{P}$ and $\tensor{Q}$ have matched $\alpha$-compatible \new{representatives}.
\end{proposition}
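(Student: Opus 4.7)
The plan is to prove both implications by lifting curves and geodesics between $\Var{S}_\alpha^\vect{k}$ and its Riemannian cover $\Var{P}_\alpha^\vect{k}$ via the map $\otimes$, and then to invoke \cref{prop_compatibility} (which characterizes when minimizing geodesics exist in $\Var{P}_\alpha^\vect{k}$) together with \cref{cor_monotonic_distance} (monotonicity of the pre-Segre--Veronese distance in the spherical distance $M$).

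For the forward direction ($\Rightarrow$), suppose $\tensor{P}$ and $\tensor{Q}$ are connected by a minimizing geodesic $\gamma$ in $\Var{S}_\alpha^\vect{k}$. I would apply \cref{lem_basic_covering_property} to lift $\gamma$ to a minimizing geodesic $\widetilde{\gamma}$ in $\Var{P}_\alpha^\vect{k}$ between some preimages $p\in\otimes^{-1}(\tensor{P})$ and $q\in\otimes^{-1}(\tensor{Q})$. \Cref{prop_compatibility} then ensures $p$ and $q$ are $\alpha$-compatible, i.e., $M(p,q) < \pi/\alpha$. Since by \cref{def_matched_representatives} any matched pair $(p^\star,q^\star)$ satisfies $M(p^\star,q^\star) \le M(p,q) < \pi/\alpha$, the matched pair is itself $\alpha$-compatible.

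For the reverse direction ($\Leftarrow$), start from matched $\alpha$-compatible representatives $(p^\star,q^\star)$, use \cref{prop_compatibility} to obtain a minimizing geodesic $\widetilde{\gamma}$ from $p^\star$ to $q^\star$ in $\Var{P}_\alpha^\vect{k}$, and push it forward via $\otimes$ to get a geodesic $\gamma:=\otimes\circ\widetilde{\gamma}$ in $\Var{S}_\alpha^\vect{k}$ of equal length connecting $\tensor{P}$ and $\tensor{Q}$. The main obstacle here is to show $\gamma$ is actually \emph{minimizing}, not merely a geodesic, because (as the paper flags just after \cref{cor_domain_exp}) the covering map $\otimes$ is only a local isometry and the converse of \cref{lem_basic_covering_property} does not hold in general.

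To overcome this, I would compare $\gamma$ to an arbitrary competitor $\gamma'$ in $\Var{S}_\alpha^\vect{k}$ from $\tensor{P}$ to $\tensor{Q}$ by lifting $\gamma'$ starting at $p^\star$ using \cref{lem_basic_covering_property}, obtaining a curve $\widetilde{\gamma}'$ ending at some $q'\in\otimes^{-1}(\tensor{Q})$ with $\ell(\widetilde{\gamma}')=\ell(\gamma')$. Bounding $\ell(\widetilde{\gamma}')\ge\dist_{\Var{P}_\alpha^\vect{k}}(p^\star,q')$ then splits into two cases: if $p^\star$ and $q'$ are $\alpha$-compatible, the matching property yields $M(p^\star,q')\ge M(p^\star,q^\star)$, so \cref{cor_monotonic_distance} gives $\dist_{\Var{P}_\alpha^\vect{k}}(p^\star,q')\ge\dist_{\Var{P}_\alpha^\vect{k}}(p^\star,q^\star)=\ell(\gamma)$; if instead they are $\alpha$-incompatible, the last proposition of \cref{sec_sub_preSVlog} gives $\dist_{\Var{P}_\alpha^\vect{k}}(p^\star,q')=\lambda+\mu$, which dominates $\dist_{\Var{P}_\alpha^\vect{k}}(p^\star,q^\star)$ by the law of cosines since $\cos(\alpha M(p^\star,q^\star))\ge -1$. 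Either way $\ell(\gamma')\ge\ell(\gamma)$, establishing minimality.
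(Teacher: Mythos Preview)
Your proposal is correct and follows essentially the same route the paper indicates: the paper gives no explicit proof of \cref{prop:existence_of_minimizing_geodesics} but says it ``follows similarly'' to \cref{thm_logarithm_SV}, whose proof likewise pushes the minimizing geodesic between matched compatible representatives down via $\otimes$ and appeals to \cref{cor_monotonic_distance} to preserve the length ordering. Your write-up is in fact more careful than the paper's sketch in two respects: you make the forward direction explicit (lift via \cref{lem_basic_covering_property}, apply \cref{prop_compatibility}, then observe matched representatives have no larger $M$), and in the reverse direction you handle the case where the lifted competitor ends at an $\alpha$-\emph{incompatible} preimage by invoking the $\lambda+\mu$ distance formula, a case the paper's argument for \cref{thm_logarithm_SV} does not address.
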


\begin{proposition}[Distance between points in $\Var{S}_{\alpha}^{\vect{k}}$]\label{prop:distance_formula}
	Let $\tensor{P}$, $\tensor{Q} \in \Var{S}_{\alpha}^{\vect{k}}$.
	If $p = (\lambda, \vect{u}_{1}, \dots, \vect{u}_d) \in \otimes^{-1}(\tensor{P})$ and $q = (\mu, \vect{v}_{1}, \dots, \vect{v}_{d}) \in \otimes^{-1}(\tensor{Q})$ are matched, $\alpha$-compatible representatives, then
	\begin{align*}
		\operatorname{dist}_{\Var{S}_{\alpha}^{\vect{k}}}(\tensor{P}, \tensor{Q}) = \operatorname{dist}_{\Var{P}_{\alpha}^{\vect{k}}}(p, q).
	\end{align*}
\end{proposition}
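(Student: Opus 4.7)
The plan is to realize the intrinsic distance on $\Var{S}_\alpha^\vect{k}$ as an infimum of distances on $\Var{P}_\alpha^\vect{k}$ over all pairs of preimages, and then identify that infimum with the value at the matched pair using \cref{cor_monotonic_distance}. The key identity I would first establish is
\[
\operatorname{dist}_{\Var{S}_\alpha^\vect{k}}(\tensor{P}, \tensor{Q}) = \inf_{\substack{p'\in\otimes^{-1}(\tensor{P}) \\ q'\in\otimes^{-1}(\tensor{Q})}} \operatorname{dist}_{\Var{P}_\alpha^\vect{k}}(p', q').
\]
Both inequalities use the covering property. For $\ge$, the map $\otimes$ is a local isometry (\cref{lem_riemannian_covering}), so any piecewise smooth curve in $\Var{P}_\alpha^\vect{k}$ connecting two preimages descends via $\otimes$ to a curve of the same length on $\Var{S}_\alpha^\vect{k}$ connecting $\tensor{P}$ and $\tensor{Q}$. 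For $\le$, I would fix an arbitrary piecewise smooth curve $\gamma$ from $\tensor{P}$ to $\tensor{Q}$ in $\Var{S}_\alpha^\vect{k}$ together with any $p' \in \otimes^{-1}(\tensor{P})$; then \cref{lem_basic_covering_property} provides a lift $\widetilde{\gamma}$ of equal length ending at some $q' \in \otimes^{-1}(\tensor{Q})$.

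Next, I would use the explicit form of the deck transformations given in \cref{lem_riemannian_covering}, which fix the radial coordinate, to conclude that every preimage of $\tensor{P}$ has radius $\lambda$ and every preimage of $\tensor{Q}$ has radius $\mu$. Therefore, $\operatorname{dist}_{\Var{P}_\alpha^\vect{k}}(p', q')$ depends only on the spherical quantity $M(p', q')$ from \cref{def_compatibility}: it equals $\sqrt{\lambda^2 - 2\lambda\mu\cos(\alpha M(p', q')) + \mu^2}$ for $\alpha$-compatible pairs by \cref{prop_distance}, and it equals $\lambda+\mu$ for incompatible pairs by the preceding proposition on distances between incompatible points. Since $\cos \ge -1$, the compatible formula is always bounded above by $\lambda+\mu$, so the infimum is attained on a compatible pair.

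Finally, \cref{cor_monotonic_distance} shows that the compatible distance is strictly increasing in $M$, so the infimum is achieved exactly when $M(p', q')$ is minimized, which by \cref{def_matched_representatives} means exactly at a matched pair. Combined with the hypothesis that $(p,q)$ is matched and $\alpha$-compatible, this yields the claimed equality. The main obstacle is the first step: the covering-space lifting must be invoked carefully so that every pair of preimages is realized as the endpoints of \emph{some} lifted curve, which ultimately relies on the transitivity of the isometric deck-transform action on the fibres described in \cref{lem_riemannian_covering}.
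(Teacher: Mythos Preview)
Your proposal is correct and follows essentially the same route as the paper: the paper merely states that this proposition ``follows similarly'' from the proof of \cref{thm_logarithm_SV}, whose core ingredients are exactly the covering lift (\cref{lem_riemannian_covering,lem_basic_covering_property}) together with the monotonicity of the distance in $M$ (\cref{cor_monotonic_distance}). Your write-up is in fact more explicit than the paper's, in particular by separately disposing of $\alpha$-incompatible preimage pairs via the $\lambda+\mu$ bound, which the paper leaves implicit.
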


The straightforward approach to determine matched representatives of $\tensor{P}$, $\tensor{Q}\in\Var{S}_\alpha^\vect{k}$ consists of choosing an arbitrary $p^\star\in\otimes^{-1}(\tensor{P})$ and then computing all distances $M(p^\star,q)$ for all $q\in\otimes^{-1}(\tensor{Q})$ and selecting a minimum.
By \cref{lem_riemannian_covering}, the possible $q$ can be labeled by $(\sigma_1, \dots, \sigma_d) \in \{\, -1, 1 \,\}^{d}$ such that $\sigma_1^{k_1} \cdots \sigma_d^{k_d} = 1$.
Thus, this strategy has a computational complexity of $\mathcal{O}(2^{d} \dim \Var{S}_\alpha^\vect{k})$, where $\mathcal{O}(\dim \Var{S}_\alpha^\vect{k})$ is the asymptotic cost for computing one distance $M(p^\star,q)$.
We now explain how a matched representative can be constructed efficiently in $\mathcal{O}(d \dim\Var{S}_\alpha^\vect{k})$ operations.

Let $q_{\mathrm{ref}} \in \otimes^{-1}(\Var{Q})$ be a reference preimage.
We then observe that
\begin{align*}
	M(p^{\star}, q)^{2} = M(p^{\star}, q_{\mathrm{ref}})^{2} + \sum_{i = 1}^{d} \frac{1 - \sigma_i}{2} \Delta_i
\end{align*}
with $\Delta_i = k_i ((\sphericalangle_i - \pi)^{2} -\sphericalangle_i^{2})$, where $\sphericalangle_i$ is the angle between the $i$th components of $p^{\star}$ and $q_{\mathrm{ref}}$.
Minimizing $M(p^{\star}, q)$ thus corresponds to maximizing $\sum_{i = 1}^{d} {\sigma_i} \Delta_i$ over $\sigma_i \in \{-1, 1\}$.
An unconstrained optimum is
\begin{align}\label{eq:minimizing_sigmas}
	\sigma_i =
	\begin{cases}
		1, &\text{ if $\Delta_i$ is nonnegative},\\
		-1, &\text{otherwise}.
	\end{cases}
\end{align}
If this assignment also satisfies the constraint $\sigma_1^{k_1} \cdots \sigma_d^{k_d} = 1$, so that $q \in \otimes^{-1}(\tensor{Q})$, then we are done.
Otherwise, there is at least one smallest $\abs{\Delta_\ell}$ among the odd $k_\ell$'s.
The sign of the corresponding $\sigma_\ell$ should be flipped.

\begin{proposition}[Matchmaking]\label{prop_matching_algorithm}
	The algorithm described in the previous paragraph produces matched representatives.
\end{proposition}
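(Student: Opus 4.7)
The plan is to reduce matchmaking to a discrete optimization over the sign pattern $\sigma \in \{-1,1\}^d$ indexing the fiber $\otimes^{-1}(\tensor{Q})$ from \cref{lem_riemannian_covering}, verify the identity for $M(p^\star,q)^2$ that the text uses, and then justify the corrective flip.

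First, I would parameterize the fiber: every $q \in \otimes^{-1}(\tensor{Q})$ has the form $q = \imath_\sigma(q_{\mathrm{ref}})$ with $\sigma_1^{k_1}\cdots\sigma_d^{k_d} = 1$, and $\imath_\sigma$ only reflects the $i$th spherical factor when $\sigma_i = -1$. Writing $\sphericalangle_i$ for the angle between the $i$th spherical components of $p^\star$ and $q_{\mathrm{ref}}$, the corresponding angle between $p^\star$ and $q$ thus equals $\sphericalangle_i$ or $\pi - \sphericalangle_i$ according to whether $\sigma_i = 1$ or $\sigma_i = -1$. Plugging this into the definition of $M$ in \cref{def_compatibility} and collecting terms immediately yields $M(p^\star,q)^2 = M(p^\star,q_{\mathrm{ref}})^2 + \sum_i \frac{1-\sigma_i}{2}\Delta_i$, the identity used in the text.

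Minimizing $M(p^\star,q)^2$ is therefore equivalent to maximizing the linear functional $J(\sigma) := \sum_i \sigma_i \Delta_i$ over $\sigma \in \{-1,1\}^d$ subject to $\sigma_1^{k_1}\cdots\sigma_d^{k_d}=1$. Dropping the parity constraint, the unique maximum $\sum_i |\Delta_i|$ is attained exactly at the sign pattern of \cref{eq:minimizing_sigmas}. If that pattern happens to satisfy the parity constraint, it also solves the constrained problem, which covers the first branch of the algorithm.

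The main obstacle is the remaining case, in which the unconstrained optimum has product $-1$. Here I would argue combinatorially that flipping any $\sigma_i$ from its unconstrained optimum value decreases $J$ by exactly $2|\Delta_i|$ and that only indices with $k_i$ odd affect the product. Since flipping an even-$k_i$ index strictly hurts $J$ without helping feasibility, an optimal correction uses only odd-$k_i$ flips, and an odd number of them to restore parity. Letting $c_\ell := 2|\Delta_\ell|$ with $\ell \in \arg\min_{k_i \text{ odd}} |\Delta_i|$, any admissible correction consists of $n \geq 1$ (odd) such flips at cost at least $n c_\ell \geq c_\ell$, with equality attained by the single flip of $\sigma_\ell$ prescribed by the algorithm. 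This would complete the proof that the algorithm outputs matched representatives.
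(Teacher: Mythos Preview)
Your proposal is correct and follows essentially the same approach as the paper: reduce matching to maximizing $J(\sigma)=\sum_i \sigma_i\Delta_i$ under the parity constraint, observe the unconstrained optimum is \cref{eq:minimizing_sigmas}, and then argue that if it is infeasible a single flip at an odd-$k_\ell$ index with minimal $|\Delta_\ell|$ is optimal. The paper organizes the last step slightly differently---first reducing without loss of generality to all $k_i$ odd and all $\Delta_i\ne 0$, then deriving a contradiction from any feasible optimum differing from the unconstrained one in $p\ge 2$ indices by flipping two of them back---whereas you bound the cost of any admissible correction directly by $n\cdot 2|\Delta_\ell|\ge 2|\Delta_\ell|$; both arguments are equivalent, though your phrase ``strictly hurts'' and ``unique maximum'' should be softened to accommodate the case $\Delta_i=0$.
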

\begin{proof}
A detailed proof is given in \cref{sub:proof_of_matching_algorithm}.
\end{proof}

The construction of the matchmaking algorithm suggests that determining an analogue of \cref{prop:pre-segre_geodesically_connected} is more complicated for the Segre--Veronese manifold.
Essentially, we have to consider the following problem: Given $p^{\star} \in \Var{P}_{\alpha}^{\vect{k}}$, what is the set of $q \in \Var{P}_{\alpha}^{\vect{k}}$ such that the corresponding matched representatives $q^{\star} \in \Var{P}_{\alpha}^{\vect{k}}$ are $\alpha$-compatible with $p^{\star}$, and when is this set the whole of $\Var{P}_{\alpha}^{\vect{k}}$? By our calculations, this question can be reduced to a combinatorial optimization problem. As we were unable to produce a closed form of its solution \new{in all cases}, we first present the following general result.

\begin{proposition}\label{prop_geo_connected}
	The Segre--Veronese manifold $\Var{S}_\alpha^\vect{k}$ is geodesically connected if $0 < \alpha < 1 / \sqrt{k_1 + \dots + k_d}$ and is not geodesically connected if $2 / \sqrt{k_1 + \dots + k_d} \leq \alpha$.
\end{proposition}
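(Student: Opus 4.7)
The plan is to characterize geodesic connectedness of $\Var{S}_\alpha^\vect{k}$ via \cref{prop:existence_of_minimizing_geodesics}: two rank-$1$ tensors $\tensor{P}$ and $\tensor{Q}$ admit a minimizing geodesic between them if and only if they have matched, $\alpha$-compatible representatives on the pre-Segre--Veronese manifold. Both directions then reduce to controlling the minimum of $M(p,q)$ over all preimages $p \in \otimes^{-1}(\tensor{P})$ and $q \in \otimes^{-1}(\tensor{Q})$.

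For the first direction, I would combine \cref{prop:pre-segre_geodesically_connected} with the trivial observation that a matched pair $(p^\star, q^\star)$ has $M$-value at most that of any representative pair. If $0 < \alpha < 1/\sqrt{k_1 + \cdots + k_d}$, then by \cref{prop:pre-segre_geodesically_connected} every pair of points in $\Var{P}_\alpha^\vect{k}$ is $\alpha$-compatible, so in particular the matched pair is $\alpha$-compatible, whence \cref{prop:existence_of_minimizing_geodesics} yields a minimizing geodesic between $\tensor{P}$ and $\tensor{Q}$.

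For the second direction, I would exhibit tensors whose matched preimages have $M$-value at least $\pi/\alpha$, so that $\alpha$-compatibility fails. Assuming $n_i \geq 2$, choose orthonormal pairs $\vect{u}_i, \vect{v}_i \in \RR^{n_i}$ for each $i$ and set $\tensor{P} = \vect{u}_1^{\otimes k_1} \otimes \cdots \otimes \vect{u}_d^{\otimes k_d}$ and $\tensor{Q} = \vect{v}_1^{\otimes k_1} \otimes \cdots \otimes \vect{v}_d^{\otimes k_d}$. By \cref{lem_riemannian_covering}, every preimage of $\tensor{Q}$ is obtained from $(1, \vect{v}_1, \dots, \vect{v}_d)$ by coordinatewise sign flips $\sigma_i \in \{\pm 1\}$ subject to $\sigma_1^{k_1} \cdots \sigma_d^{k_d} = 1$. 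Since $\sphericalangle(\vect{u}_i, \pm\vect{v}_i) = \pi/2$, every pair of preimages achieves the same value $M^2 = (\pi/2)^2 \sum_i k_i$. Hence the matched $M$-value equals $(\pi/2)\sqrt{k_1 + \cdots + k_d}$, and if $\alpha \geq 2/\sqrt{k_1 + \cdots + k_d}$ then $\alpha M \geq \pi$, so $\alpha$-compatibility fails and no minimizing geodesic exists by \cref{prop:existence_of_minimizing_geodesics}.

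The main obstacle I anticipate is not technical but rather choosing a counterexample for which the combinatorial matching is transparent: for generic rank-$1$ tensors, one must reason nontrivially about which sign pattern $\sigma_i$ minimizes $M$. The orthogonal construction sidesteps this entirely, because sign flips leave every factor angle fixed at $\pi/2$ and thus collapse the matching problem. A minor caveat is the degenerate case $n_i = 1$, where orthogonal directions do not exist; those trivial factors can be handled separately or, as seems implicit throughout the paper, one assumes $n_i \geq 2$ for every $i$.
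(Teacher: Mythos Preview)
Your proposal is correct and follows essentially the same approach as the paper: the first direction invokes \cref{prop:pre-segre_geodesically_connected} (the paper cites it together with \cref{lem_riemannian_covering}, while you route through \cref{prop:existence_of_minimizing_geodesics}, which amounts to the same thing), and the second direction uses exactly the same orthogonal counterexample with all factor angles equal to $\pi/2$, so that sign flips leave $M$ unchanged and the matched pair has $\alpha M \ge \pi$. Your remark on the degenerate case $n_i=1$ is a nice addition that the paper leaves implicit.
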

\begin{proof}
	The first statement is a corollary of \cref{lem_riemannian_covering,prop:pre-segre_geodesically_connected}.

	To prove the second statement, assume $\alpha \geq 2 / \sqrt{k_1 + \dots + k_d}$ and pick $p$, $q \in \Var{P}_\alpha^\vect{k}$ with angles $\sphericalangle_1 = \dots = \sphericalangle_d = \frac{\pi}{2}$ between their components.
	These two representatives are matched.
	However, using
	\(
		M(p, q)^{2} = (k_1 + \dots + k_d) \frac{\pi^{2}}{4}
	\)
	in \cref{def_compatibility} shows that they are not $\alpha$-compatible.
\end{proof}

For $1 / \sqrt{k_1 + \dots + k_d} < \alpha < 2 / \sqrt{k_1 + \dots + k_d}$, geodesic connectedness is more complicated and depends also on the parity of the $k_i$'s. \new{For example, we have the following basic special case.}

\new{\begin{proposition}\label{prop_geo_connected2}
	Let $\vect{k} = (k_1, \dots, k_d)$ with $k_i$ even for all $i$.
	Then, the Segre--Veronese manifold $\mathcal{S}_{\alpha}^{\vect{k}}$ is geodesically connected if and only if $0 < \alpha < \frac{2}{\sqrt{k_1 + \dots + k_d}}$.
\end{proposition}
\begin{proof}
	If the $k_i$'s are even, then $\sigma_1^{k_1} \cdots \sigma_d^{k_d} = 1$ is automatically satisfied.
	The condition that two representatives $p$ and $q$ are matched is then equivalent to the requirement that the component angles satisfy $0 \le \sphericalangle_i \leq \frac{\pi}{2}$. The largest value that $M(p, q)^{2}$ can attain over all $p,q\in\Var{P}_\alpha^\vect{k}$ is then
	$(k_1 + \dots + k_d) \frac{\pi^{2}}{4}$, by choosing all angles equal to $\pi/2$ as in the proof of \cref{prop_geo_connected}. Hence, all points of $\Var{P}_\alpha^\vect{k}$ are compatible if and only if $0 < \alpha < 2 / \sqrt{k_1+\dots+k_d}$, concluding the proof.
\end{proof}}

\new{The previous result shows that the lower bound on $\alpha$ in \cref{prop_geo_connected} can be sharp, while the upper bound on $\alpha$ in \cref{prop_geo_connected} can fail to be sharp.}
A full analysis is beyond the scope of this article.

\section{Sectional curvature}%
\label{sec_sectional_curvatures}

Formulae for the sectional curvature of a manifold are useful in many situations, for example when approximating or compressing data that live in said manifold~\cite{JVVV2024,Diepeveen23,Zimmermann20}, or to ensure uniqueness of the Fr\'echet mean~\cite{Karcher1977,Kendall1990,Afsari2011}, as required in the experiment from \cref{sec_applications}. We compute these curvatures here for the $\alpha$-warped Segre--Veronese manifolds.

Consider first the $\alpha$-warped Segre manifold $\Var{S}_{\alpha} = \Var{S}_{\alpha}^{(1, \dots, 1)}$ \new{with corresponding pre-Segre manifold $\Var{P}_\alpha$.}
To compute its curvature, we recall the following general result about warped products.

\begin{proposition}[O'Neill {\cite[7.42]{ONeill1983}}]\label{prop:curvature_of_warped_product}
	For $(a, b) \in \Var{M} \times_f \Var{N}$, let $\dot{\vect x}$, $\dot{\vect y}$, $\dot{\vect z} \in \Tang_a \Var{M}$ and $\dot{\vect u}$, $\dot{\vect v}$, $\dot{\vect w} \in \Tang_b \Var{N}$ be tangent vectors, which naturally lift to $\Tang_{(a, b)}(\Var{M} \times_f \Var{N})$.
	Let $R_\Var{M}$ and $R_\Var{N}$  be the Riemann \new{curvature} tensors on $\Var{M}$ and $\Var{N}$, respectively.
Then, the Riemann \new{curvature} tensor $R$ on $\Var{M} \times_f \Var{N}$ is described by:\footnote{%
	Note that we use a definition of $R$ that is consistent with~\cite{Lee1997} rather than~\cite{ONeill1983}.
	The result is that some signs differ in our expressions from theirs.}
	\begin{enumerate}[itemsep=3pt,parsep=4pt]
		\item $\displaystyle R(\dot{\vect x}, \dot{\vect y}) \dot{\vect z} = R_{\Var{M}}(\dot{\vect x}, \dot{\vect y}) \dot{\vect z}$,
		\item $\displaystyle R(\dot{\vect u}, \dot{\vect x}) \dot{\vect y} = -\frac{1}{f} \operatorname{Hess}[f](\dot{\vect x}, \dot{\vect y}) \dot{\vect u}$,
		\item $\displaystyle R(\dot{\vect x}, \dot{\vect y}) \dot{\vect u} = 0$,
		\item $\displaystyle R(\dot{\vect u}, \dot{\vect v}) \dot{\vect x} = 0$,
		\item $R(\dot{\vect x}, \dot{\vect u}) \dot{\vect v} = -\frac{\innerproduct{\dot{\vect u}}{\dot{\vect v}}}{f} \nabla_{\dot{\vect x}} (\operatorname{grad} f)$,
		\item $R(\dot{\vect u}, \dot{\vect v}) \dot{\vect w} = R_{\Var{N}}(\dot{\vect u}, \dot{\vect v}) \dot{\vect w} + \frac{\norm{\operatorname{grad} f}^{2}}{f^{2}} \left( \innerproduct{\dot{\vect u}}{\dot{\vect w}} \dot{\vect v} - \innerproduct{\dot{\vect v}}{\dot{\vect w}} \dot{\vect u} \right)$.
	\end{enumerate}
	Herein, $\innerproduct{\cdot}{\cdot}$ is the inner product on $\Tang_{(a,b)} (\Var{M} \times_{f} \Var{N})$, $\operatorname{grad} f$ is the Riemannian gradient of $f$, $\operatorname{Hess}[f]$ is the Riemannian Hessian of $f$, and $\nabla$ is the Levi--Civita connection.
\end{proposition}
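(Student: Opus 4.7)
The plan is to reduce the six identities to computations with the Levi--Civita connection $\nabla$ of the warped metric $g \times_f h$, and then expand $R(X, Y) Z = \nabla_X \nabla_Y Z - \nabla_Y \nabla_X Z - \nabla_{[X, Y]} Z$ separately in each case involving horizontal lifts $\dot{\vect{x}}, \dot{\vect{y}}, \dot{\vect{z}}$ from $\Var{M}$ and vertical lifts $\dot{\vect{u}}, \dot{\vect{v}}, \dot{\vect{w}}$ from $\Var{N}$.

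As a preliminary, I would derive connection formulas for $\nabla$. Applying Koszul's identity
\[
2 \innerproduct{\nabla_X Y}{Z} = X \innerproduct{Y}{Z} + Y \innerproduct{X}{Z} - Z \innerproduct{X}{Y} + \innerproduct{[X, Y]}{Z} - \innerproduct{[X, Z]}{Y} - \innerproduct{[Y, Z]}{X},
\]
together with the fact that horizontal and vertical lifts have vanishing Lie bracket and that the warping rescales only the vertical part of the metric by $f^2$, one obtains: (a) $\nabla_{\dot{\vect{x}}} \dot{\vect{y}}$ is the horizontal lift of $\nabla^{\Var{M}}_{\dot{\vect{x}}} \dot{\vect{y}}$; (b) $\nabla_{\dot{\vect{x}}} \dot{\vect{u}} = \nabla_{\dot{\vect{u}}} \dot{\vect{x}} = \frac{\dot{\vect{x}} f}{f} \dot{\vect{u}}$; and (c) $\nabla_{\dot{\vect{u}}} \dot{\vect{v}} = \nabla^{\Var{N}}_{\dot{\vect{u}}} \dot{\vect{v}} - \frac{\innerproduct{\dot{\vect{u}}}{\dot{\vect{v}}}}{f} \operatorname{grad} f$, where $\operatorname{grad} f$ is identified with its horizontal lift.

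I would then plug these into the definition of $R$ case by case. Identities (1), (3), and (4) follow directly from (a) and (b), using that $\nabla$ preserves the horizontal/vertical splitting in the relevant arguments and that $[\dot{\vect{x}}, \dot{\vect{u}}] = 0$ for horizontal and vertical lifts. Identities (2) and (5) reduce to short manipulations of (b) and (c), invoking the identification $\operatorname{Hess}[f](\dot{\vect{x}}, \dot{\vect{y}}) = \innerproduct{\nabla_{\dot{\vect{x}}} \operatorname{grad} f}{\dot{\vect{y}}}$ and the symmetries of $R$.

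The main obstacle is identity (6). Since $\nabla_{\dot{\vect{u}}} \dot{\vect{w}}$ has both a vertical component (the $\Var{N}$-connection) and a horizontal component proportional to $\operatorname{grad} f$, the iterated derivative $\nabla_{\dot{\vect{v}}} \nabla_{\dot{\vect{u}}} \dot{\vect{w}}$ requires all three connection formulas simultaneously. After expanding $\nabla_{\dot{\vect{v}}} \nabla_{\dot{\vect{u}}} \dot{\vect{w}} - \nabla_{\dot{\vect{u}}} \nabla_{\dot{\vect{v}}} \dot{\vect{w}} - \nabla_{[\dot{\vect{v}}, \dot{\vect{u}}]} \dot{\vect{w}}$ and antisymmetrizing in $(\dot{\vect{u}}, \dot{\vect{v}})$, the Hessian-type contributions cancel, leaving the lifted $\Var{N}$-Riemann tensor together with the correction $\frac{\norm{\operatorname{grad} f}^{2}}{f^{2}}\bigl(\innerproduct{\dot{\vect{u}}}{\dot{\vect{w}}} \dot{\vect{v}} - \innerproduct{\dot{\vect{v}}}{\dot{\vect{w}}} \dot{\vect{u}}\bigr)$ as stated.
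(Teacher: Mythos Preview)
The paper does not prove this proposition; it is quoted from O'Neill's textbook as background and used without argument. Your outline is the standard derivation (Koszul's formula for the warped connection, then case-by-case expansion of the curvature tensor) and is essentially how O'Neill himself proves it, so it is correct and nothing more is needed here.
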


For $\Var{S}_{\alpha}$, which locally looks like the warped product manifold $\Var{P}_{\alpha}$, these expressions simplify considerably. 
Applying \cref{prop:curvature_of_warped_product} to $\Var{N} = \Sp^{\vect{n},\vect{1}} = \mathbb{S}^{n_1 - 1} \times \dots \times \mathbb{S}^{n_d - 1}$ and $\Var{M} = \mathbb{R}_{> 0}$ while observing that $\operatorname{grad} f = \alpha$ and $R_{\Var{M}} = 0$ yields the next result.

\begin{corollary}
	In the notation of \cref{prop:curvature_of_warped_product}, the Riemann \new{curvature} tensor $R$ of $\Var{S}_{\alpha}$ at $\lambda \vect{u}_1\otimes\dots\otimes\vect{u}_d$ satisfies
	\begin{align*}
		R(\dot{\vect u}, \dot{\vect v}) \dot{\vect w} = R_{\Sp^{\vect{n},\vect{1}}}(\dot{\vect u}, \dot{\vect v}) \dot{\vect w} + \frac{1}{\lambda^{2}} (\innerproduct{\dot{\vect u}}{\dot{\vect w}} \dot{\vect v} - \innerproduct{\dot{\vect v}}{\dot{\vect w}} \dot{\vect u}),
	\end{align*}
	and it is $0$ in the remaining directions.
\end{corollary}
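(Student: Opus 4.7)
The plan is to apply \cref{prop:curvature_of_warped_product} to the warped product $\Var{P}_{\alpha} = \Var{P}_{\alpha}^{(1,\dots,1)} = \mathbb{R}_{>0} \times_{\alpha \mathrm{Id}} \Sp^{\vect{n},\vect{1}}$, and then transfer the resulting formula to $\Var{S}_{\alpha}$ using the fact that the normal Riemannian covering $\otimes \colon \Var{P}_{\alpha} \to \Var{S}_{\alpha}$ from \cref{lem_riemannian_covering} is a local isometry and therefore preserves the Riemann tensor pointwise. Hence it suffices to compute $R$ on $\Var{P}_{\alpha}$ at an arbitrary preimage $(\lambda, \vect{u}_1, \dots, \vect{u}_d)$ of $\lambda \vect{u}_1 \otimes \dots \otimes \vect{u}_d$.

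The first step is to collect the geometric data of the warping function $f(\lambda) = \alpha \lambda$ on the base $(\mathbb{R}_{>0}, \langle\cdot,\cdot\rangle)$. Since $f$ is linear and the base carries the standard Euclidean metric, $\operatorname{grad} f = \alpha \partial_{\lambda}$, a constant vector field, so $\norm{\operatorname{grad} f}^{2} = \alpha^{2}$ and $\nabla_{\dot{\vect{x}}}(\operatorname{grad} f) = 0$ for every $\dot{\vect{x}} \in \Tang \mathbb{R}_{>0}$. Similarly, $\operatorname{Hess}[f] \equiv 0$ by linearity of $f$, and the base Riemann tensor vanishes, $R_{\mathbb{R}_{>0}} = 0$, because $\mathbb{R}_{>0}$ is one-dimensional.

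Plugging these vanishings into items (i)--(v) of \cref{prop:curvature_of_warped_product} immediately shows that $R$ is $0$ on any triple of tangent vectors with at least one entry tangent to the $\mathbb{R}_{>0}$ factor: (i), (iii), and (iv) are automatic from $R_{\mathbb{R}_{>0}} = 0$, while (ii) and (v) vanish because $\operatorname{Hess}[f]$ and $\nabla_{\dot{\vect{x}}}(\operatorname{grad} f)$ are identically zero. Only item (vi) remains nontrivial and yields
\begin{align*}
	R(\dot{\vect{u}}, \dot{\vect{v}}) \dot{\vect{w}} = R_{\Sp^{\vect{n}, \vect{1}}}(\dot{\vect{u}}, \dot{\vect{v}}) \dot{\vect{w}} + \frac{\alpha^{2}}{(\alpha \lambda)^{2}} \bigl( \innerproduct{\dot{\vect{u}}}{\dot{\vect{w}}} \dot{\vect{v}} - \innerproduct{\dot{\vect{v}}}{\dot{\vect{w}}} \dot{\vect{u}} \bigr),
\end{align*}
which reduces to the claimed expression once the $\alpha^{2}$ factors cancel. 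The only real subtlety is keeping track of the sign convention flagged in the footnote to \cref{prop:curvature_of_warped_product}; beyond that, the argument is a direct specialization of O'Neill's formulas followed by invocation of the local isometry.
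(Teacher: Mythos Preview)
Your argument is correct and follows essentially the same route as the paper: apply \cref{prop:curvature_of_warped_product} with $\Var{M}=\mathbb{R}_{>0}$, $\Var{N}=\Sp^{\vect{n},\vect{1}}$, and $f=\alpha\,\mathrm{Id}$, noting that $\operatorname{grad} f$ is constant and $R_{\Var{M}}=0$, then identify $\Var{P}_\alpha$ with $\Var{S}_\alpha$ via the local isometry of \cref{lem_riemannian_covering}. You simply spell out in more detail why items (i)--(v) vanish, which the paper leaves implicit.
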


As sectional curvatures can be defined in terms of the Riemann \new{curvature} tensor, we quickly find the next consequence of the last result.

\begin{corollary}\label{cor:segre_sectional_curvature}
	For $(\vect{u}_1, \dots, \vect{u}_d) \in \Sp^{\vect{n},\vect{1}}$, let the lifts of the tangent vectors $\dot{\vect{u}}_i \in \Tang_{\vect{u}_i} \mathbb{S}^{n_i - 1}$ and $\dot{\vect v}_j \in \Tang_{\vect{u}_j} \mathbb{S}^{n_j - 1}$ be orthonormal with respect to $\langle \cdot, \cdot \rangle_{\Var{P}_{\alpha}}$.
	The sectional curvature $K$ of $\Var{S}_{\alpha}$ at $\lambda \vect{u}_1\otimes\dots\otimes\vect{u}_d$ satisfies
\begin{align*}
	K(\dot{\vect u}_i, \dot{\vect v}_j) =
	\begin{cases}
		\frac{1 - \alpha^{2}}{\alpha^{2} \lambda^{2}}, &\text{if $i = j$},\\
		-\frac{1}{\lambda^{2}}, &\text{otherwise},
	\end{cases}
\end{align*}
and $K = 0$ in all remaining directions.
\end{corollary}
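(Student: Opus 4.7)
The plan is to apply the standard identity $K(X,Y) = \innerproduct{R(X,Y)Y}{X}_{\Var{P}_\alpha}$ for pairs of orthonormal tangent vectors, directly using the Riemann tensor formula established in the previous corollary. Because that corollary asserts that $R$ vanishes whenever any argument is a radial direction, the third case of the claim (``$K=0$ in all remaining directions'') is immediate. It remains to handle the two spherical sub-cases: $\dot{\vect u}_i$ and $\dot{\vect v}_j$ both in spherical factors with $i=j$ or $i\neq j$.

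A crucial preliminary observation, which I would spell out at the start of the computation, is how the $\Var{P}_\alpha$-orthonormality interacts with the ambient Euclidean inner product used implicitly in $R_{\Sp^{\vect{n},\vect{1}}}$. For Segre the sphere factors carry the metric $k_i\innerproduct{\cdot}{\cdot} = \innerproduct{\cdot}{\cdot}$ and then get scaled by $(\alpha\lambda)^2$ in the warped product; consequently, if $\dot{\vect u}_i\in\Tang_{\vect{u}_i}\Sp^{n_i-1}$ is $\Var{P}_\alpha$-unit, then $\innerproduct{\dot{\vect u}_i}{\dot{\vect u}_i} = (\alpha\lambda)^{-2}$ in the Euclidean inner product on $\RR^{n_i}$, and cross inner products between different factors vanish identically.

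For $i=j$, I would substitute into the previous corollary. The intrinsic sphere Riemann tensor on the unit sphere gives $R_{\Sp^{n_i-1}}(\dot{\vect u}_i,\dot{\vect v}_i)\dot{\vect v}_i = \innerproduct{\dot{\vect v}_i}{\dot{\vect v}_i}\dot{\vect u}_i - \innerproduct{\dot{\vect u}_i}{\dot{\vect v}_i}\dot{\vect v}_i = (\alpha\lambda)^{-2}\dot{\vect u}_i$, while the warping correction contributes $-\lambda^{-2}\dot{\vect u}_i$, yielding $R(\dot{\vect u}_i,\dot{\vect v}_i)\dot{\vect v}_i = \tfrac{1-\alpha^2}{\alpha^2\lambda^2}\dot{\vect u}_i$; pairing with the $\Var{P}_\alpha$-unit $\dot{\vect u}_i$ gives the stated value. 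For $i\neq j$, the product structure forces $R_{\Sp^{\vect{n},\vect{1}}}(\dot{\vect u}_i,\dot{\vect v}_j)\dot{\vect v}_j = 0$ since the Levi--Civita connection splits as a direct product, and $\innerproduct{\dot{\vect u}_i}{\dot{\vect v}_j}_{\Var{P}_\alpha}=0$ because the factors are $\Var{P}_\alpha$-orthogonal; only the warping correction term $-\lambda^{-2}\innerproduct{\dot{\vect v}_j}{\dot{\vect v}_j}_{\Var{P}_\alpha}\dot{\vect u}_i = -\lambda^{-2}\dot{\vect u}_i$ survives, producing $K=-\lambda^{-2}$.

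Overall the argument is a bookkeeping exercise in the previous corollary; the only mild subtlety, and the one I would emphasize, is being careful to distinguish the Euclidean inner product appearing inside $R_{\Sp^{n_i-1}}$ from the warped product inner product $\innerproduct{\cdot}{\cdot}_{\Var{P}_\alpha}$, since the tangent vectors are normalized with respect to the latter while the intrinsic sphere curvature tensor is formulated with respect to the former.
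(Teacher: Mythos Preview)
Your proposal is correct and takes essentially the same approach as the paper: both compute $K = \innerproduct{R(\dot{\vect u}_i,\dot{\vect v}_j)\dot{\vect v}_j}{\dot{\vect u}_i}_{\Var{P}_\alpha}$ from the previous corollary and carefully track the difference between the Euclidean and the warped inner products. The only cosmetic difference is that the paper treats both cases in a single computation by rescaling to reduce the spherical term to $\frac{1}{\alpha^2\lambda^2}K_{\Sp^{\vect{n},\vect{1}}}(\alpha\lambda\dot{\vect u}_i,\alpha\lambda\dot{\vect v}_j)=\frac{1}{\alpha^2\lambda^2}\delta_{ij}$, whereas you split into $i=j$ and $i\neq j$ and evaluate $R_{\Sp}$ explicitly.
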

\begin{proof}
	Recall that $\innerproduct{\dot{\vect u}_i}{\dot{\vect v}_j}_{\Var{P}_{\alpha}} = \alpha^{2} \lambda^2 \innerproduct{\dot{\vect u}_i}{\dot{\vect v}_j}_{\Sp^{\vect{n},\vect{1}}}$.
	We compute 
	\begin{align*}
		K(\dot{\vect u}_i, \dot{\vect v}_j) ={}& \innerproduct{R(\dot{\vect u}_i, \dot{\vect v}_j) \dot{\vect v}_j}{\dot{\vect u}_i}_{\Var{P}_{\alpha}} \nonumber\\
		={}& \innerproduct{R_{\Sp^{\vect{n},\vect{1}}}(\dot{\vect u}_i, \dot{\vect v}_j) \dot{\vect v}_j}{\dot{\vect u}_i}_{\Var{P}_{\alpha}} + \frac{1}{\lambda^{2}} \mleft( \innerproduct{\dot{\vect u}_i}{\dot{\vect v}_j}_{\Var{P}_{\alpha}}^{2} - \innerproduct{\dot{\vect v}_j}{\dot{\vect v}_j}_{\Var{P}_{\alpha}} \innerproduct{\dot{\vect u}_i}{\dot{\vect u}_i}_{\Var{P}_{\alpha}} \mright) \nonumber\\
		={}& \alpha^{2} \lambda^{2} \innerproduct{R_{\Sp^{\vect{n},\vect{1}}}(\dot{\vect u}_i, \dot{\vect v}_j) \dot{\vect v}_j}{\dot{\vect u}_i}_{\Sp^{\vect{n},\vect{1}}} - \frac{1}{\lambda^{2}} \nonumber\\
		={}& \frac{\alpha^{2} \lambda^{2}}{\alpha^{4} \lambda^{4}} \innerproduct{R_{\Sp^{\vect{n},\vect{1}}}(\alpha \lambda \dot{\vect u}_i, \alpha \lambda \dot{\vect v}_j) \alpha \lambda \dot{\vect v}_j}{\alpha \lambda \dot{\vect u}_i}_{\Sp^{\vect{n},\vect{1}}} - \frac{1}{\lambda^{2}} \nonumber\\
		={}& \frac{1}{\alpha^{2} \lambda^{2}} K_{\Sp^{\vect{n},\vect{1}}}(\alpha \lambda \dot{\vect u}_i, \alpha \lambda \dot{\vect v}_j) - \frac{1}{\lambda^{2}} \nonumber\\
		={}& \frac{1}{\alpha^{2} \lambda^{2}} \delta_{i j} - \frac{1}{\lambda^{2}}.
	\end{align*}	
	In all other directions the sectional curvature is $0$, because $R = 0$ in these directions.
\end{proof}

The Segre--Veronese manifold $\Var{S}_{\alpha}^{\vect{k}}$, $\vect{k} = (k_1, \dots, k_d)$, is a Riemannian submanifold of the Segre manifold $\Var{S}_{\alpha}^{\vect{1}}$, where $\vect{1}$ is a vector of $k_1 + \dots + k_d$ ones.
If we look closely at the expressions in \cref{prop_geodesics,cor_exponential_map} for geodesics in $\Var{S}_{\alpha}^{\vect{1}}$, we see that starting out in a tangent direction to $\Var{S}_\alpha^\vect{k} \subset \Var{S}_\alpha^\vect{1}$ produces a path wholly contained in $\Var{S}_\alpha^\vect{k}$.
We have thus proved the following result.

\begin{proposition}
	The Segre--Veronese manifold $\Var{S}_{\alpha}^{\vect{k}}$ is a totally geodesic submanifold of the Segre manifold $\Var{S}_{\alpha}^\vect{1}$, and so its sectional curvature is described by restricting $K$ in \cref{cor:segre_sectional_curvature} to the appropriate tangent subspace.
\end{proposition}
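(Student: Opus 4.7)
The plan is to verify the two claims in turn; the sectional-curvature statement will follow routinely from total geodesy via the Gauss equation, so the substantive work lies in showing that $\Var{S}_\alpha^\vect{k}$ is totally geodesic in $\Var{S}_\alpha^\vect{1}$, where $\vect{1}$ denotes the tuple of $k_1 + \cdots + k_d$ ones.

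First, I would describe the inclusion. Set-theoretically, a partially symmetric rank-$1$ tensor is a rank-$1$ tensor, so $\Var{S}^\vect{k} \subset \Var{S}^\vect{1}$ as subsets of the same ambient $\alpha$-warped space $\Var{R}_\alpha$, and this makes the inclusion an isometric embedding for the induced metrics. Via the pre-Segre--Veronese coverings it is induced by the diagonal map
\[
	\Var{P}_\alpha^\vect{k} \hookrightarrow \Var{P}_\alpha^\vect{1}, \quad
	(\lambda, \vect{u}_1, \ldots, \vect{u}_d) \longmapsto \bigl(\lambda, \underbrace{\vect{u}_1, \ldots, \vect{u}_1}_{k_1}, \ldots, \underbrace{\vect{u}_d, \ldots, \vect{u}_d}_{k_d}\bigr),
\]
with tangent vectors lifted by repeating each $\dot{\vect{u}}_i$ likewise $k_i$ times. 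Under this identification the metric on $\Var{P}_\alpha^\vect{k}$ coincides with the pullback of the metric on $\Var{P}_\alpha^\vect{1}$, because $k_i\norm{\dot{\vect{u}}_i}^2 = \sum_{j=1}^{k_i}\norm{\dot{\vect{u}}_i}^2$; consequently the scalar $N$ from \cref{prop_geodesics} is also preserved. Moreover, a direct expansion using \cref{eqn_Nu} shows that diagonal tangent vectors span $\Tang_\tensor{P} \Var{S}_\alpha^\vect{k}$ viewed as a subspace of $\Tang_\tensor{P} \Var{S}_\alpha^\vect{1}$.

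Next, I would apply \cref{prop_geodesics} for $\vect{1}$ to such a diagonal tangent vector. Since the $k_i$ copies of $\dot{\vect{u}}_i$ share a common norm, and the angle $a_i$ in \cref{prop_geodesics} depends only on $\norm{\dot{\vect{u}}_i}$, $N$, $\lambda$, and $\dot{\lambda}$, all $k_i$ copies rotate by the same angle along the same great circle through $\vect{u}_i$, and thus remain equal along the entire geodesic. The geodesic therefore stays inside the diagonal image, and passing through $\otimes$ via \cref{lem_riemannian_covering} yields a geodesic in $\Var{S}_\alpha^\vect{1}$ entirely contained in $\Var{S}_\alpha^\vect{k}$. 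Since this holds for an arbitrary tangent direction to $\Var{S}_\alpha^\vect{k}$, the submanifold is totally geodesic.

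Finally, I would invoke the standard consequence of the Gauss equation: the second fundamental form of a totally geodesic submanifold vanishes identically, so its intrinsic sectional curvature on a $2$-plane $\Pi \subset \Tang_\tensor{P} \Var{S}_\alpha^\vect{k}$ equals the ambient sectional curvature of $\Var{S}_\alpha^\vect{1}$ on the same $\Pi$. Combined with \cref{cor:segre_sectional_curvature}, this yields the proposition. The main obstacle is the bookkeeping for the diagonal identification; once compatibility of $N$ and the metric is checked, invariance of the geodesic under the diagonal structure is a direct reading of the formula in \cref{prop_geodesics}.
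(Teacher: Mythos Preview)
Your proposal is correct and follows essentially the same approach as the paper: both argue that a geodesic of $\Var{S}_\alpha^\vect{1}$ starting in a direction tangent to $\Var{S}_\alpha^\vect{k}$ stays in $\Var{S}_\alpha^\vect{k}$ by inspecting the explicit formula of \cref{prop_geodesics}. You have simply fleshed out the bookkeeping (the diagonal embedding of the pre-manifolds, preservation of the metric and of $N$, and the Gauss equation for the curvature consequence) that the paper leaves implicit in its one-sentence argument.
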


\section{Numerical experiment} \label{sec_applications}
\new{We conclude with a numerical illustration of an example application that benefits from knowledge of the exponential and logarithmic maps of the $\alpha$-warped Segre manifold, derived in \cref{cor_exponential_map,thm_logarithm_SV}. This application will also illustrate their computational efficacy.}

The $\alpha$-warped \new{geometries of} Segre--Veronese manifolds $\Var{S}_\alpha^{\vect{k}}$ were implemented in Julia, including \new{the exponential map, logarithmic map}, and the geodesic distance. A modified version of our implementation of these geometries is available as part of the Manifolds.jl library~\cite{Axen24}.
\new{The Julia source code for the experiments below is available publicly at \url{https://gitlab.kuleuven.be/numa/public/alpha-warped-experiments}.}

Recall the \emph{consensus aggregation problem} from Chafamo, Shanmugam, and Tokcan~\cite{CST2023}:
Given $M$ approximate \emph{tensor rank decompositions} of the same tensor $\tensor{A} \in \RR^{n_1 \times\dots\times n_d}$, namely
\begin{align*}
	\tensor{A} \approx \tensor{A}^{(m)} :=
	\sum_{i=1}^r \tensor{A}_i^{(m)} :=
	\sum_{i = 1}^{r} \vect{a}_{1i}^{(m)} \otimes \dots \otimes \vect{a}_{di}^{(m)},\quad m = 1,\, \dots,\, M,
\end{align*}
estimate the true decomposition of $\tensor{A}$.
A consensus aggregation algorithm was proposed in \cite{CST2023}; we give a simplified description of the essential ingredients of their method. 
While tensor rank decompositions usually have generically unique decompositions into a set of rank-$1$ tensors \cite{MM2024,TBC2024,Ballico2024}, to perform aggregation one needs to determine how these rank-$1$ tensors match up across multiple decompositions.
For this reason, the rank-$1$ tensors $\tensor{A}_i^{(m)}$ are first matched using a ($k$-means) clustering algorithm in~\cite{CST2023}.
This strategy can work well if the decomposition problem is \emph{well conditioned}, so that $\tensor{A}^{(m)}$ being close to $\tensor{A}$ will imply that the corresponding rank-$1$ tensors are close too~\cite{Breiding18}.
After matching up the rank-$1$ tensors, the results are aggregated in~\cite{CST2023} by setting
\begin{align*}
	\widehat{\vect{a}}_{i}^{l} ={}& \operatorname{median}( \vect{a}^{(m)}_{li} : m = 1,\, \dots,\, M ), \quad i=1,\ldots,r,\; l=1,\ldots,d.
\end{align*}
The median \new{of these vectors} is taken elementwise.
Note that it is possible for $\widehat{\vect{a}}_{i}^{l}$ to have different values if different representatives are chosen.
This is circumvented in \cite{CST2023} by aggregating positive data, for which there is a natural, positive, representative.
The aggregated tensor decomposition is then
\begin{align*}
	\tensor{A} \approx \widehat{\tensor{A}} := \sum_{i = 1}^{r} \widehat{\vect{a}}_i^{1} \otimes \dots \otimes \widehat{\vect{a}}_i^{d}.
\end{align*}

\new{The method from \cite{CST2023} essentially takes a Euclidean midpoint of multiple points in spheres of different radii with the goal of averaging rank-$1$ tensors in the Segre manifold. From a geometric viewpoint, it is more natural to exploit the intrinsic geometry of the Segre manifold in its Euclidean metric, i.e., $\Var{S}_1^{(1,\dots,1)}$, rather than using the product Euclidean metric on the pre-Segre manifold $\Var{P}_1^{(1,\dots,1)}$. Recall from \cref{lem_riemannian_covering} that pre-Segre and Segre manifolds are locally isometric under the \emph{warped} product metric on the pre-Segre manifold, but not under the \emph{standard} product metric that \cite{CST2023} use. Therefore, we propose here to perform consensus aggregation by}
estimating each rank-$1$ tensor with the \emph{Fr\'echet mean} on the warped Segre manifold $\Var{S}_{\alpha}=\Var{S}_\alpha^{(1,\dots,1)}$.
Recall that the Fr\'echet mean or \emph{Riemannian center of mass} of points \new{$x_1,\dots,x_n \in \Var{M}$} in a Riemannian manifold $\Var{M}$ is \new{defined as}
\begin{align*}
	\operatorname{mean}_{\Var{M}}(x_1,\dots,x_n) = \underset{p\in\Var{M}}{\arg\min} \sum_{i = 1}^{n} \dist_{\Var{M}}(p, \new{x_i})^{2}.
\end{align*}
Its uniqueness has been investigated among others in~\cite{Karcher1977,Kendall1990,Afsari2011}.
For manifolds with negative sectional curvatures, the Fr\'echet mean is unique and the above defines a valid function.
Hence, we propose to estimate $\tensor{A}$ instead as
\begin{align*}
\tensor{A} \approx \widetilde{\tensor{A}} := \sum_{i=1}^r \widetilde{\tensor{A}}_i, \quad\text{where }
\widetilde{\tensor{A}}_i = \operatorname{mean}_{\Var{S}_\alpha}( \tensor{A}_i^{(1)}, \dots, \tensor{A}_i^{(M)}),\, i=1,\dots,r.
\end{align*}
To ensure that all points are \new{compatible} by \cref{prop_geo_connected}, \new{one can} choose $\alpha < 1 / \sqrt{d}$. This guarantees that the Fr\'echet mean is well defined \new{for all inputs}. \new{Note that significant warping may not be necessary if all inputs to the Fr\'echet mean are sufficiently close to one another in the angular distance of \cref{def_compatibility}.}

We reproduce the \new{first} experiment from \cite{CST2023}, using the authors' code with the same parameter values and initialization seed. \new{This synthetic experiment involves generating a \emph{reference} rank-$9$ tensor rank decomposition of a $40 \times 20 \times 2000$ real-valued tensor $\tensor{A}$ with positive elements. Twenty noisy versions of $\tensor{A}$ are created by adding Poisson noise as described in \cite{CST2023}. They are then decomposed with the zero-inflated Poisson tensor factorization (ZIPTF) method from \cite{CST2023} from $10$ random starting points. We compare the following three methods of aggregating the results:}
\begin{enumerate}
 \item[S.] \new{No aggregation, i.e., the first ZIPTF decomposition of the first noisy tensor is used as ``consensus'';}
 \item[C.] C-ZIPTF is the consensus aggregation of \new{all corresponding rank-$1$ tensors in the $10$ decompositions of one noisy tensor} by the method of medians, \new{as described previously and in \cite{CST2023}}; and
 \item[$\alpha$.] F-ZIPTF \new{operates like C-ZIPTF but aggregates the rank-$1$ tensors} by our proposed method of Fr\'echet means. \new{The latter is computed in the $\alpha$-warped geometry of the Segre manifold for various choices of $\alpha$.}
\end{enumerate}
The Fr\'echet mean is approximated by successive geodesic interpolation, \new{as implemented in Manifolds.jl}; see for example~\cite{Chakraborty20,Chakraborty15,Chakraborty19,Cheng16}.
\new{To measure the accuracy of these methods, we measure the error between the reference tensor rank decomposition $\tensor{A} = \tensor{A}_1 + \dots + \tensor{A}_9$ and an approximating decomposition $\widehat{\tensor{A}} = \widehat{\tensor{A}}_1 +\dots+ \widehat{\tensor{A}}_9 \approx \tensor{A}$ as the mean of the relative Riemannian distances between the rank-$1$ tensors in both decompositions:
\begin{align}\label{eqn_mean_rel_dist}
 \mathrm{error}( (\tensor{A}_1,\dots,\tensor{A}_9), (\widehat{\tensor{A}}_1,\dots,\widehat{\tensor{A}}_9 ) := \frac{1}{9} \sum_{i=1}^9 \frac{\dist_{\Var{S}_1}(\tensor{A}_i, \widehat{\tensor{A}}_i)}{\|\tensor{A}_i\|_F},
\end{align}
where $\norm{\cdot}_F$ is the Frobenius norm.
The rank-$1$ tensors are first optimally matched before computing this error.}

\begin{figure}[tb]
	\centering
	\includegraphics[height=17em]{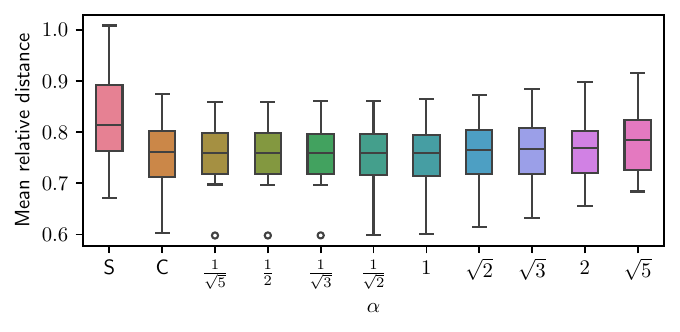}
	\caption{
		\new{The mean relative distance from \cref{eqn_mean_rel_dist} between the reference tensor rank decomposition and the approximate decompositions computed from noisy samples of this tensor rank decomposition using a single ZIPTF \cite{CST2023}, the consensus C-ZIPTF \cite{CST2023}, and the Fr\'echet F-ZIPTF in several $\alpha$-warped geometries of the Segre manifold (this article) on the decomposition problem setup described in \cref{sec_applications}. In the figure's horizontal axis, ``S'' refers to a single ZIPTF, ``C'' to C-ZIPTF, and the numerical values $\alpha=\frac{1}{\sqrt{5}}, \dots, \frac{1}{\sqrt{2}}, 1, \sqrt{2}, \dots, \sqrt{5},$ correspond to F-ZIPTF where the Fr\'echet mean was computed in the $\alpha$-warped geometry of the Segre manifold.}}
	\label{fig_accuracy}
	\vspace{-10pt}
\end{figure}

\new{The results of our experiments are summarized in \cref{fig_accuracy}.}
We conclude that the accuracy of the consensus variants is better than a single ZIPTF.
There does not appear to be a meaningful difference between C-ZIPTF and F-ZIPTF.
\new{Moreover, the experiments suggest that F-ZIPTF is quite robust to the choice of warping factor $\frac{1}{\sqrt{5}} \le \alpha \le \sqrt{5}$.}

\new{The exponential and logarithmic maps of the $\alpha$-warped Segre manifold can be computed very efficiently. In particular, to compute all Fr\'echet means of the groups of rank-$1$ tensors in all $9$ warped geometries required $21.7$ seconds and involved $14,400$ evaluations of both the exponential and logarithmic maps. On average, this is approximately $0.75$ milliseconds for each evaluation of either exponential or logarithmic maps. This experiment was performed on a computer system comprising an AMD Ryzen 7 5800X3D CPU (8 cores, up to 3.4 GHz clock speed, 96 MB L3 cache memory) and $4\times32$ GB DDR4-3600 main memory, running Julia 1.12.4 on Ubuntu 22.04.3 LTS.}

\section*{Acknowledgments}
\new{We thank Andr\'e Uschmajew for editing our article and the two reviewers for their detailed feedback that improved this article, leading to further enhancements to the presentation of our results. We are especially grateful for the suggestions leading to clarifications in \cref{sec_applications} and the inclusion of the left panel in \cref{fig_alphawarping}, \cref{lem_basic_geodesic_form}, a shorter proof of \cref{prop_distance}, \cref{prop_geo_connected2}, and an expanded version of \cref{fig_accuracy}. The suggestions of another anonymous reviewer, which led to clarifications about Riemannian geometry in \cref{sub:warped_geometries,sec_sub_SV}, a reformulation of our results in terms of exponential and logarithmic maps rather than geodesics in \cref{prop_geodesics,thm_logarithm_preSV}, and a shorter non-constructive proof of \cref{thm_logarithm_preSV}, are also acknowledged.}

\appendix
\section{Proof of the technical results}\label{sec:proof_of_lemmas}

\subsection{Proof of \cref{lem_riemannian_covering}}\label{sub:proof_of_riemannian_covering}
Each $\imath_\sigma$ is an isometry because $\Var{P}_\alpha^\vect{k}$ is a product manifold where $\imath_\sigma$ acts only on the spheres and the maps $\mathrm{Id}$ and $-\mathrm{Id}$ are basic isometries of the sphere with the standard Euclidean inner product, and hence also with any scaled inner product.

Normality, i.e., $\otimes(p)=\otimes(q)$ implies there exists a $\sigma$ such that $q=\imath_\sigma(p)$ \cite[Appendix A]{ONeill1983}, follows from the multilinearity of the tensor product $\otimes$ \cite[Chapter 1]{Greub1978}.

By the same arguments as \cite[Section 4.1]{BBV2019}, $\otimes$ is a smooth covering map. Hence, $\dim\Var{P}_\alpha^\vect{k}=\dim\Var{S}_\alpha^\vect{k}$ and the differential $\deriv \otimes : \Tang \Var{P}_\alpha^\vect{k} \to \Tang \Var{S}_\alpha^\vect{k}$ is everywhere left-invertible.

The key part is showing that $\otimes:\Var{P}_\alpha^\vect{k}\to\Var{S}_\alpha^\vect{k}$ is Riemannian, i.e., the metric of $\Var{S}_\alpha^\vect{k}$ is the pushforward of the one of $\Var{P}_\alpha^\vect{k}$, which is a straightforward computation.
Let $\dot{\vect{u}}_i \in \Tang_{\vect{u}_i} \Sp^{n_1-1} = \vect{u}_i^\perp$ and $\dot{\lambda}\in\RR$ be arbitrary.
By differentiating $\otimes:\Var{P}_\alpha^\vect{k}\to\Var{S}_\alpha^\vect{k}$, we get
\begin{multline*}
(\deriv_{(\lambda, \vect{u}_1,\ldots,\vect{u}_d)} \otimes)(\dot{\lambda}, \dot{\vect{u}}_1,\ldots,\dot{\vect{u}}_d)\\
=
\dot{\lambda} \tensor{U} +
\lambda ( \dot{\nu}_{k_1}(\dot{\vect{u}}_1) \otimes\vect{u}_2^{\otimes k_2}\otimes\cdots\otimes \vect{u}_d^{\otimes k_d}) + \cdots + \lambda (\vect{u}_1^{\otimes k_1}\otimes\cdots\otimes\vect{u}_{d-1}^{\otimes k_{d-1}}\otimes \dot{\nu}_{k_d}(\dot{\vect{u}}_d) ),
\end{multline*}
where $\tensor{U}$ and $\nu_k(\vect{u})$ are as in \cref{eqn_U,eqn_Nu}.
The $\alpha$-warped metric on $\Var{S}_\alpha^\vect{k}$ satisfies
\begin{equation*}
 \varsigma^{\alpha}_{\lambda \tensor{U}}\left( (\deriv \otimes)(\dot{x}, \dot{\vect{u}}_1,\ldots,\dot{\vect{u}}_d), (\deriv \otimes)(\dot{y}, \dot{\vect{v}}_1,\ldots,\dot{\vect{v}}_d) \right) =
 \dot{x} \dot{y} + (\alpha \lambda)^2 \langle \dot{\tensor{U}}, \dot{\tensor{V}} \rangle,
\end{equation*}
having dropped the subscript $(\lambda, \vect{u}_1,\ldots,\vect{u}_d)$ of the differential and
where
\begin{align*}
\dot{\tensor{U}} &= (\dot{\nu}_{k_1}(\dot{\vect{u}}_1)\otimes\vect{u}_2^{\otimes k_2}\otimes\cdots\otimes\vect{u}_d^{\otimes k_d}) +\cdots+ (\vect{u}_1^{\otimes k_1}\otimes\cdots\otimes\vect{u}_{d-1}^{\otimes k_{d-1}}\otimes\dot{\nu}_{k_d}(\dot{\vect{u}}_d)),\\
\dot{\tensor{V}} &= (\dot{\nu}_{k_1}(\dot{\vect{v}}_1)\otimes\vect{u}_2^{\otimes k_2}\otimes\cdots\otimes\vect{u}_d^{\otimes k_d}) +\cdots+ (\vect{u}_1^{\otimes k_1}\otimes\cdots\otimes\vect{u}_{d-1}^{\otimes k_{d-1}}\otimes\dot{\nu}_{k_d}(\dot{\vect{v}}_d)).
\end{align*}
Recall from~\cite{Hackbusch2019} that the Euclidean inner product between rank-$1$ tensors satisfies
\(
 \langle \vect{u}_1\otimes\cdots\otimes\vect{u}_d, \vect{v}_1\otimes\cdots\otimes\vect{v}_d \rangle = \prod_{i=1}^d \langle \vect{u}_i, \vect{v}_i \rangle.
\)
Hence, rank-$1$ tensors are orthogonal to one another if they are orthogonal in at least one factor.
We compute that
\(
\langle \nu_{k_i}(\dot{\vect{u}}_i), \vect{u}_i^{\otimes k} \rangle = 0,
\)
while
\(
\langle \nu_{k_i}(\dot{\vect{u}}_i), \nu_{k_i}(\dot{\vect{v}}_i) \rangle = k_i \langle \dot{\vect{u}}_i, \dot{\vect{v}}_i \rangle
\)
if $\dot{\vect{u}}_i, \dot{\vect{v}}_i \in \Tang_{\vect{u}_i} \Sp^n$.
As a consequence, we can observe that the cross terms in $\langle\dot{\tensor{U}},\dot{\tensor{V}}\rangle$ vanish, so we are left with
\begin{align*}
\langle\dot{\tensor{U}},\dot{\tensor{V}}\rangle
= \sum_{i=1}^d \langle \dot{\nu}_{k_i}(\dot{\vect{u}}_i), \dot{\nu}_{k_i}(\dot{\vect{v}}_i) \rangle
= \sum_{i=1}^d k_i \langle \dot{\vect{u}}_i, \dot{\vect{v}}_i \rangle.
\end{align*}
This shows that $\varsigma_{\lambda\tensor{U}}$ is the pushforward of $g_{\lambda, \vect{u}_1,\ldots,\vect{u}_d}$ under $\otimes$, concluding the proof.
\qed

\subsection{Proof of \cref{lem_basic_covering_property}}\label{sub:proof_of_covering_property}
 Every smooth curve in $\Var{N}$ has such a unique smooth lift through a normal covering by \cite[Lemma A.9]{ONeill1983}. Moreover, as $\phi$ is a local isometry that restricts to an isometry on evenly covered neighborhoods, the length of $\gamma$ and its unique lift $\widetilde{\gamma}$ in $\Var{M}$ coincide. Indeed, if we partition the lift $\widetilde{\gamma}$ into the curve segments $\widetilde{\gamma}_i$ such that $\widetilde{\gamma}_i \subset \Var{M}_i \subset \Var{M}$ with $\phi|_{\Var{M}_i} : \Var{M}_i \to \Var{N}_i$ an isometry (such a partition exists and is used to prove the existence of a unique lift in \cite[Lemma A.9]{ONeill1983}), then
 \[
  \ell(\widetilde{\gamma})
  = \sum_{i=1}^k \ell(\widetilde{\gamma}_i)
  = \sum_{i=1}^k \ell(\phi(\widetilde{\gamma}_i) )
  = \ell(\gamma).
 \]
 This concludes the first part of the proof.

 If there were a strictly shorter smooth curve $\widetilde{\gamma}' \subset \Var{M}$ connecting the endpoints of the lift $\widetilde{\gamma}$, then by the same partitioning argument $\phi \circ \widetilde{\gamma}' \subset \Var{N}$ would be a shorter piecewise smooth curve ($\phi$ is a local embedding) than $\gamma$, contradicting its minimality. This proves the second part.
\qed

\subsection{Proof of \cref{prop_matching_algorithm}}\label{sub:proof_of_matching_algorithm}

We want to maximize $f = \sum_{i = 1}^{d} \sigma_i \Delta_i$ over $\sigma_i \in \set{-1, 1}$ with the constraint that $\sigma_1^{k_1} \cdots \sigma_d^{k_d} = 1$.

We can assume without loss of generality that all $k_i$'s are odd. Indeed, even $k_i$ do not impact the constraint, so we can choose the corresponding sign so that $|\Delta_i| = \sigma_i \Delta_i$, which is clearly optimal, also without constraint, because of the bilinear structure of $f$. The matching algorithm chooses these signs precisely in this way.

We can further assume without loss of generality that all $\Delta_i$ are nonzero, for otherwise we could take the unconstrained optimum \cref{eq:minimizing_sigmas} and simply swap the sign of a $\sigma_j$ for which $\Delta_j=0$ to obtain a feasible solution that is also globally optimal, which is what the matching algorithm does.

Let $(\varsigma_1, \dots, \varsigma_d)$ be a constrained optimizer of $f$ that differs in the least number of indices from the unconstrained optimizer $(\sigma_1,\dots,\sigma_d)$ from \cref{eq:minimizing_sigmas}. Denote the number of different indices by $p = \sharp\{ i \in \{1,\dots,d\} \mid \sigma_i \ne \varsigma_i \}$. Then, there are three cases:
\begin{enumerate}
\item[$p=0$.] The unconstrained optimum given by \cref{eq:minimizing_sigmas} and chosen by the algorithm satisfies the constraint.
 \item[$p = 1$.] If $f_0$ denotes the value of $f$ for the unconstrained optimum, such a constrained optimum has value $f_0 - 2\abs{\Delta_i}$.
	Hence, $i$ must correspond to the minimal $\abs{\Delta_i}$, which the matching algorithm indeed selects by construction.
	\item[$p \ge 2$.] Let $i \ne j$ be any two indices for which $\varsigma_i \ne \sigma_i$ and $\varsigma_j \ne \sigma_j$.
	By changing the signs of $\varsigma_i$ and $\varsigma_j$ to match those of $\sigma_i$ and $\sigma_j$, respectively, this increases the objective by $2|\Delta_i| + 2|\Delta_j|$. Since $|\Delta_i|, |\Delta_j| > 0$, this modified, feasible constrained point with $p-2$ different indices has a strictly higher objective value than $(\varsigma_1,\dots,\varsigma_d)$, which was supposed to be the constrained \new{optimizer} with minimal $p$. This is a contradiction, so this case cannot occur.
\end{enumerate}
This concludes the proof.
\qed

\subsection{Proof of \cref{eqn_reparameterization}}%
\label{sub:Proof of eqn_reparameterization}

Let \(C = \frac{\dot{\lambda}}{\alpha \lambda N}\), and recall that \(B = \mathrm{arctan}(C)\) and \(A = \lambda \cos(B) = \frac{\lambda}{\sqrt{1 + C^2}}.
\)
Define
\begin{align}\label{eqn_def_z}
 z(t) := \frac{\lambda + \dot{\lambda}t}{\alpha \lambda N t} = \frac{1}{\alpha N t} + C, \quad\text{so that}\quad z'(t) = -\frac{1}{\alpha N t^2}.
\end{align}
In this notation, \cref{eqn_reparameterization} becomes
\(
 \alpha s(t) = \frac{\pi}{2} - \arctan(z(t)),
\)
so that
\[
 \alpha s'(t) = -\frac{z'(t)}{1 + z^2(t)}.
\]
Since $\sqrt{ \dot{\lambda}^2 + (\alpha \lambda N)^2} = \alpha \lambda N \sqrt{1 + C^2}$, \cref{eq:ivp} simplifies as follows:
\begin{align*}
 \alpha s'(t)
 &= \cos^2(\alpha s(t) + B) A^{-1} \sqrt{ \dot{\lambda}^2 + (\alpha \lambda N)^2}\\
 &= \cos^2(\alpha s(t) + B) \alpha N (1 + C^2)\\
 &= \alpha N (1 + C^2) \cos^2 \Bigl( \frac{\pi}{2} - (\arctan(z(t)) - \arctan(C)) \Bigr)\\
 &= \alpha N (1 + C^2) \sin^2( \arctan(z(t)) - \arctan(C) ).
\end{align*}
Using basic trigonometric identities, it is easy to obtain the next identity:
\begin{align}\label{eqn_sin_appendix}
\sin(\arctan(x) - \arctan(y))=\frac{x-y}{\sqrt{1+x^2} \sqrt{1+y^2}}.
\end{align}
This allows us to conclude the proof because
\[
 \alpha s'(t)
 = \alpha N (1 + C^2) \frac{(z(t) - C)^2}{(1 + z^2(t))(1 + C^2)}
 = \alpha N \frac{(\alpha N t)^{-2}}{1 + z^2(t)}
 = -\frac{z'(t)}{1 + z^2(t)}.
\]

\subsection{Proof of \cref{eq:gamma_of_1}}%
\label{sub:Proof of eq:gamma_of_1}

Using the same notation as in \cref{sub:Proof of eqn_reparameterization}, we get that
\(
 \alpha s(1) = \frac{\pi}{2} - \arctan( z(1) ),
\)
so that
\begin{align*}
\gamma(1)
= \mu(s(1))
= \frac{\lambda \cos( B )}{\cos(\alpha s(1) + B)}
&= \frac{ \lambda }{\sqrt{1 + C^2}} \cdot \frac{1}{\sin(\arctan(z(1)) - \arctan(C))}\\
&= \frac{ \lambda }{\sqrt{1 + C^2}} \frac{ \sqrt{1+C^2} \sqrt{1 + z^2(1)} }{ z(1) - C }\\
&= \alpha \lambda N \sqrt{1 + z^2(1)},
\end{align*}
where we used \cref{eqn_sin_appendix} in the penultimate and \cref{eqn_def_z} in the final equality.
Since $z^2(1) = (\lambda + \dot{\lambda})^2 (\alpha \lambda N)^{-2}$, the proof is concluded.

\bibliographystyle{siam}
\bibliography{SJVV}

\end{document}